\documentclass[11pt]{article}
\usepackage{amsthm,amsmath,amssymb,amsfonts,url,booktabs,setspace,fancyhdr,bm}
\usepackage{cancel}
\usepackage{graphicx}
\usepackage{geometry}
\usepackage{enumerate}
\usepackage[shortlabels]{enumitem}
\usepackage[babel]{microtype}
\usepackage[english]{babel}
\usepackage{comment}
\usepackage{bbm}
\usepackage{csquotes}
\usepackage{mathabx}
\usepackage{subcaption}
\usepackage{float}
\usepackage{xcolor}
\usepackage{diagbox}
\usepackage{tikz}
\usetikzlibrary{positioning,arrows.meta,shapes.geometric}
\usepackage{hyperref}
\usepackage[capitalise]{cleveref}

\theoremstyle{plain}
\newtheorem{theorem}{Theorem}[section]
\newtheorem{lemma}[theorem]{Lemma}
\newtheorem{claim}[theorem]{Claim}

\newtheorem{fact}[theorem]{Fact}

\newtheorem{corollary}[theorem]{Corollary}
\newtheorem{cor}[theorem]{Corollary}
\newtheorem{conjecture}[theorem]{Conjecture}
\newtheorem{problem}[theorem]{Problem}
\newtheorem{definition}[theorem]{Definition}

\newtheorem{remark}[theorem]{Remark}
\newtheorem{construction}[theorem]{Construction}

\newlist{Case}{enumerate}{3}
\setlist[Case,1]{label={\bfseries Case \arabic*.},labelindent=1em,labelwidth=1cm,labelsep*=1em,leftmargin=!}
\setlist[Case,2]{label={\bfseries Subcase \arabic{Casei}.\arabic*.},labelindent=-1em,labelwidth=1cm,labelsep*=1em,leftmargin=!}
\setlist[Case,3]{label={\bfseries Subsubcase \arabic{Casei}.\arabic{Caseii}.\arabic*.},labelindent=-1em,labelwidth=1cm,labelsep*=1em,leftmargin=!}

\newenvironment{poc}{\begin{proof}[Proof of claim]}{\end{proof}}

\newcommand{\eps}{\varepsilon}

\newcommand{\VCdim}{\mathrm{VC}}

\newcommand{\ex}{\operatorname{ex}}

\title{Homomorphism and VC-dimension thresholds:\\ spectra and separations}
\author{
Lior Gishboliner\thanks{Department of Mathematics, University of Toronto, Canada. Email: \texttt{lior.gishboliner@utoronto.ca}. Research supported by an NSERC Discovery Grant.
}
\and
Xinqi Huang\thanks{School of Mathematical Sciences, University of Science and Technology of China, Hefei, China and Extremal Combinatorics and Probability Group (ECOPRO), Institute for Basic Science (IBS), Daejeon, South Korea.
Email: \texttt{huangxq@mail.ustc.edu.cn}. Supported by the USTC Excellent PhD Students Overseas, the Institute for Basic Science (IBS-R029-C4), the National Key Research and Development Programs of China 2023YFA1010200, the NSFC under Grants No. 12171452 and No. 12231014 and Innovation Program for Quantum Science and Technology 2021ZD0302902.
}
\and
Hong Liu\thanks{Extremal Combinatorics and Probability Group (ECOPRO), Institute for Basic Science (IBS), Daejeon, South Korea. Email: \texttt{hongliu@ibs.re.kr}. Supported by the Institute for Basic Science (IBS-R029-C4).}
}
\date{}

\begin{document}

\maketitle

\begin{abstract} Minimum-degree thresholds ask when excluding a fixed graph $H$ forces a dense graph to admit a simple global description. For each fixed chromatic number, the chromatic threshold has only three possible values. We show that this finite-spectrum phenomenon is special to chromatic threshold: already among $3$-chromatic graphs, both the homomorphism and VC-dimension thresholds have infinite spectra and are nonmonotone under taking induced subgraphs. For complete tripartite graphs with a singleton part, we prove $\delta_{\mathrm{hom}}(K_{1,s,t}) \ge \max\left\{\frac13,\frac{s}{1+s+t}\right\}$, with equality for an infinite range of $s,t$; in particular, $\delta_{\mathrm{hom}}(K_{1,s,s})=s/(2s+1)$ for every $s\ge2$. More generally, for every $r\ge3$, the value $(r-2)/(r-1)$ is an accumulation point of the homomorphism thresholds of $r$-chromatic graphs. For maximal $H$-free graphs, we determine the VC-dimension threshold of every complete tripartite graph and prove that it is positive for every nonbipartite $H$, yielding in particular the exact value for every odd cycle. We also classify the chromatic threshold under an a priori VC-dimension bound.

Together with known blowup-threshold results, our theorems reveal that $\delta_\chi,\delta_{\mathrm{hom}},\delta_{\mathrm{VC}}$, and $\delta_{\mathrm B}$ are \emph{pairwise distinct}: bounded colorability, homomorphic compressibility, neighborhood complexity, and exact blowup structure are genuinely different forms of global simplicity. The proofs develop random and grid-based obstructions to bounded homomorphic images, saturated gadgets that preserve high VC-dimension under maximal completion, and a core-orientation method for raising minimum degree while preserving $H$-freeness. 
\end{abstract}

\section{Introduction}
A minimum-degree condition can turn the local prohibition of a fixed graph
$H$ into a global structural theorem.  The thresholds studied here measure
several different ways in which a dense $H$-free graph can become simple:
it may have bounded chromatic number, factor through a bounded $H$-free
homomorphic target, have a neighborhood set system of bounded VC-dimension,
or, in the strongest exact form, be a blowup of a bounded-size graph. These
conclusions are related, but they retain different amounts of information
about the original graph and are not formally equivalent.

The chromatic threshold provides a striking benchmark.  The
\emph{chromatic threshold} $\delta_\chi(H)$ is the infimum of all $\gamma>0$
such that every $H$-free graph $G$ with
$\delta(G)\ge \gamma |V(G)|$ has chromatic number bounded by a constant
depending only on $\gamma$ and $H$.  Erd\H{o}s and Simonovits
\cite{erdos_valence_1973} initiated the study of this parameter and conjectured
that $\delta_\chi(K_3)=1/3$; this was proved by Thomassen
\cite{thomassen_chromatic_2002}.  Following a sequence of works
\cite{goddard_dense_2011,luczak_coloring_2010,nikiforov_chromatic_2010},
Allen, B\"ottcher, Griffiths, Kohayakawa and Morris
\cite{ALLEN2013261} determined $\delta_\chi(H)$ for every graph $H$.  Their
theorem shows that, if $r=\chi(H)\ge3$, then
\begin{equation}\label{eq:chromatic threshold values}
\delta_{\chi}(H) \in
\left\{
\frac{r-3}{r-2},
\frac{2r-5}{2r-3},
\frac{r-2}{r-1}
\right\}.
\end{equation}
Thus, at each fixed chromatic number, the chromatic threshold has a
three-point spectrum.  This raises a natural question: is such finite
quantization a general feature of minimum-degree structure, or is it specific
to bounded colorability?

Bounded chromatic number is equivalent to a homomorphism into a bounded-size
clique.  Requiring the target itself to remain $H$-free gives a substantially
more faithful structural description. Formally, the \emph{homomorphism threshold} $\delta_{\mathrm{hom}}(H)$ of a graph $H$ is defined as follows:
\[
\delta_{\mathrm{hom}}(H):=
\inf\Bigl\{\gamma\ge 0:\exists\ H\textup{-free }F=F(\gamma,H)\
\textup{ s.t. \(\forall\ H\)-free \(G\), if }
\delta(G)\ge \gamma |V(G)|,
\textup{ then } G\xrightarrow{\textup{hom}}F\Bigr\}.
\]
Clearly $\delta_\chi(H)\le \delta_{\mathrm{hom}}(H)$.  Thomassen
\cite{thomassen_chromatic_2002} conjectured that
$\delta_{\mathrm{hom}}(K_3)=1/3$, and this was proved by \L{}uczak
\cite{luczak_structure_2006}.  The homomorphism thresholds of cliques were
subsequently determined in
\cite{goddard_dense_2011,oberkampf_structure_2020}, and the theory has more
recently been extended to clique fans~\cite{huang_exact_2026}.  In all these
examples the chromatic and homomorphism thresholds coincide.  Odd cycles give
the first known separation: for $k\ge2$,
$\delta_{\mathrm{hom}}(C_{2k+1})\le1/(2k+1)$ by
\cite{2020COMBHomoOddCycle}, while Sankar~\cite{sankar2022homotopy} proved a
positive lower bound; in contrast,
$\delta_\chi(C_{2k+1})=0$~\cite{2007OddCycleChromatic}. However, the exact value of $\delta_{\mathrm{hom}}(C_{2k+1})$ remains unknown.

The main message of this paper is that the three-value phenomenon is special
to the chromatic threshold. The homomorphism threshold has an infinite spectrum
at every fixed chromatic number, with accumulation at the largest value in
\eqref{eq:chromatic threshold values}. Furthermore, unlike the chromatic threshold, the homomorphism threshold is not monotone under induced subgraphs. The VC-dimension threshold (defined below) likewise
has new exact values, accumulation, and non-monotonic behavior under induced
subgraphs.  We determine it for every complete tripartite graph and prove a
universal lower bound that is strictly larger than the smallest chromatic
threshold value.  We also isolate and solve the bounded-VC step in chromatic
threshold arguments.  Together with known results on blowup thresholds, these
theorems show that the four parameters encode genuinely different forms of
structure.

\subsection{Homomorphism thresholds: new values and accumulation}
Our first theorem gives a general lower bound for complete tripartite graphs
with a singleton part and determines the threshold for an infinite range of
parameters.
\begin{theorem}\label{thm: general lowbd for hom chi=3 and exact values}
For all integers $1\le s\le t$,
\[
\delta_{\mathrm{hom}}(K_{1,s,t})
\ge
\max\left\{\frac13,\frac{s}{1+s+t}\right\}.
\]
Moreover, if $(t-s+1)^2\le s$ and 
$3s\ge2+2t,$ then
\[
\delta_{\mathrm{hom}}(K_{1,s,t})=
\frac{s}{1+s+t}.
\]
\end{theorem}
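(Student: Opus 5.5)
The proof splits into three parts: the lower bound $1/3$, the lower bound $s/(1+s+t)$, and the matching upper bound in the stated range.

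\textbf{Lower bound $1/3$.} Since $K_{1,s,t}\supseteq K_3$ is $3$-chromatic, $\delta_{\mathrm{hom}}(K_{1,s,t})\ge\delta_\chi(K_{1,s,t})$. By \eqref{eq:chromatic threshold values} with $r=3$, the possible values are $0$, $1/3$ and $1/2$. For $K_{1,s,t}$ with $s\ge1$ one expects the value $1/3$, as for $K_3$: Hajnal's construction (a Kneser graph plus a large bipartite part) is triangle-free, has minimum degree close to $n/3$, and has unbounded chromatic number. Being triangle-free, it is $K_{1,s,t}$-free, so it directly witnesses $\delta_\chi(K_{1,s,t})\ge 1/3$.

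\textbf{Lower bound $s/(1+s+t)$.} I would build, for each $\eps>0$ and each bounded candidate target $F$, a $K_{1,s,t}$-free graph $G$ with $\delta(G)\ge(s/(1+s+t)-\eps)n$ and $G\not\to F$. The natural skeleton is a blowup of a triangle with parts of relative sizes $1,s,t$ (normalized by $1+s+t$). Here the minimum degree of a vertex in the size-$t$ part is about $(1+s)/(1+s+t)$, and in the size-$1$ part it is about $(s+t)/(1+s+t)$. So I would sparsify the edges between some pairs of parts so that no copy of $K_{1,s,t}$ survives, while every vertex keeps degree at least about $s/(1+s+t)$.

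Concretely, I would replace the bipartite graph between the two large parts by a random or grid-like structure. The aim is that each vertex has only about $s-1$ common-neighborhood "directions", so that $K_{1,s,t}$ cannot embed, while homomorphic images are forced to be large. This fits the abstract's hint about random and grid-based obstructions. To show $G\not\to F$ for every bounded $K_{1,s,t}$-free $F$, I would argue by contradiction. Any homomorphism to a bounded $F$ has a colour class containing a positive fraction of vertices. Random or grid structure then forces two vertices in that class to have "generic" joint neighbourhoods. Pulling back, $F$ would then contain $K_{1,s,t}$: a vertex of $F$ adjacent to $s$ images and to $t$ images, with all required cross edges present.

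\textbf{Upper bound for $(t-s+1)^2\le s$ and $3s\ge 2+2t$.} I would take a $K_{1,s,t}$-free $G$ with $\delta(G)\ge(s/(1+s+t)+\eps)n$. The first step is a regularity or neighbourhood-partition argument: partition vertices by their neighbourhood profile against a bounded random sample, and map $G$ to the bounded "profile graph" $F$. The second step, which is the main obstacle, is to show that $F$ is $K_{1,s,t}$-free, i.e.\ that no copy of $K_{1,s,t}$ appears among clusters. The degree condition $3s\ge 2+2t$ is presumably what guarantees that any three clusters forming a triangle in $F$ have large common neighbourhoods. A $K_{1,s,t}$ in $F$ would then lift to one in $G$ by picking, inside the neighbourhood of the centre, $s$ and $t$ vertices with the right common adjacencies. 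The counting condition $(t-s+1)^2\le s$ should enter when the $s$ and $t$ sides overlap in cluster space: one must find $t-s+1$ extra vertices in each of roughly $t-s+1$ common neighbourhoods. I would handle this through a Kővári--Sós--Turán or double-counting step on the link of a vertex. I expect this lifting step, which uses both numerical conditions, to be the hardest part.
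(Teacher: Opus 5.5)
Your $\frac{1}{3}$ part is fine. Hajnal's triangle-free construction is $K_{1,s,t}$-free, so $\delta_{\mathrm{hom}}(K_{1,s,t})\ge\delta_\chi(K_{1,s,t})\ge\frac{1}{3}$, which is how the paper gets this term. The bound $\frac{s}{1+s+t}$, however, rests on a skeleton that cannot work. Suppose the small part $P_1$ stays complete to $P_s\cup P_t$ and you only redesign the pair $(P_s,P_t)$. Then a vertex of $P_t$ (or of $P_s$) reaches degree about $\frac{s}{1+s+t}n$ only with at least about $\frac{s-1}{1+s+t}n$ neighbours across $(P_s,P_t)$. So for $s\ge 2$ this pair has $\Omega(n^2)$ edges, whatever random or grid pattern you use. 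By K\H{o}v\'ari--S\'os--Tur\'an it contains $K_{s,t}$, and any vertex of $P_1$ completes a $K_{1,s,t}$. (More generally, $K_{1,s,t}$-free graphs have $o(n^3)$ triangles, and once $\frac{s}{1+s+t}>\frac{2}{5}$ any example is forced to be almost bipartite.)

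The missing idea is to separate density from coding. The paper puts all the density in a triangle-free reservoir: the complete bipartite graph between $X=X_1\cup\dots\cup X_{t+1}$ with $|X_i|=N$ and $Y$ with $|Y|=sN$. This is where the ratio $\frac{s}{1+s+t}$ comes from. It then adds a gadget of size independent of $N$: $m$ disjoint copies of $K_{t+1}$ and $m$ of $K_s$, with exactly one uniformly random edge between each $K_{t+1}$ and each $K_s$. Under a homomorphism into at most $C$ vertices, pigeonholing ordered image tuples gives at least $m/C^{t+1}$ copies of $K_{t+1}$ with a common image tuple and at least $m/C^{s}$ copies of $K_s$ with another. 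With positive probability the random edges realise every position pair between any two such linear-size families, so the image contains $K_{1+s+t}$. Each vertex $a_j^p$ is joined to the $s$ blocks $X_i$ with $i-p\in\{0,\dots,s-1\}\pmod{t+1}$, and the $K_s$-vertices to all of $Y$. Every degree is then at least $sN$, and each reservoir vertex sees at most $s$ vertices of any $K_{t+1}$. Together with the single-edge connections inside the gadget, this keeps every link $K_{s,t}$-free. Your sketch (``a colour class with a positive fraction of vertices'', ``generic joint neighbourhoods'') contains no mechanism of this kind.

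For the upper bound, the profile-graph route fails at its first step. The class map is a homomorphism only if no class spans an edge. Yet adjacent vertices on the same side of the near-bipartition typically have almost identical profiles; e.g.\ $K_{n/2,n/2}$ plus a perfect matching inside one side is $K_{1,2,2}$-free with minimum degree $n/2$. Moreover, every single edge of $G$ between two classes must become an edge of $F$, so a $K_{1,s,t}$ in $F$ built from such sparse edges does not lift. Your lifting step is unjustified, not merely hard.

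What is needed is a chromatic bound, after which the target is simply $K_{s+t}$, which is $K_{1,s,t}$-free because it has fewer than $1+s+t$ vertices. In the paper, $3s\ge 2+2t$ is exactly $\frac{s}{1+s+t}\ge\frac{2}{5}$, the Andr\'asfai--Erd\H{o}s--S\'os threshold. With the removal lemma it gives a partition $A\cup B$ with $\Delta(G[A]),\Delta(G[B])\le\varepsilon n$. Placing $|A|\le|B|$ in an interval $\bigl(\frac{s+j}{1+s+t}n,\frac{s+j+1}{1+s+t}n\bigr]$ with $0\le j\le\frac{t-s}{2}$ turns the minimum degree into relative cross degrees. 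Then a Brooks-type lemma bounds the chromatic numbers of the two sides by $s+j$ and $t-j$. The lemma says: in a $K_{1,s,t}$-free graph, if $d_Y(x)\ge(\frac{s}{r}+\varepsilon)|Y|$ for all $x\in X$ and $(r-s)^2\le s$, then $G[X]$ is $K_r$-free with $\Delta(G[X])\le r-1$, hence $\chi(G[X])\le r-1$.

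The hypothesis $(t-s+1)^2\le s$ is what makes this lemma applicable: it rules out a vertex with $r$ neighbours on its own side by double counting cross degrees. It is not an overlap count between the $s$- and $t$-classes, and $3s\ge 2+2t$ is not about triangles of clusters.
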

The term $1/3$ is inherited from
$\delta_{\mathrm{hom}}(K_{1,s,t})
\ge\delta_\chi(K_{1,s,t})=1/3$~\cite{ALLEN2013261}; the second term is new.
On the diagonal, the theorem gives 
$\delta_{\mathrm{hom}}(K_{1,s,s})=\frac{s}{2s+1}$ for every $s\ge 2$,
already producing infinitely many exact values.  It also shows that induced
containment does not order homomorphism thresholds, e.g.~$K_{1,1,1}\subset_{\mathrm{ind}}K_{1,4,4}
\subset_{\mathrm{ind}}K_{1,4,5},$
whereas
\[
\delta_{\mathrm{hom}}(K_{1,1,1})=\frac13
<\delta_{\mathrm{hom}}(K_{1,4,4})=\frac49
>\delta_{\mathrm{hom}}(K_{1,4,5})=\frac25.
\]

\noindent
We expect the lower bound in the theorem to be sharp throughout.
\begin{conjecture}\label{conj:delta_hom K(1,s,t)}
For all integers $1\le s\le t$,
$\delta_{\mathrm{hom}}(K_{1,s,t})=
\max\left\{\frac13,\frac{s}{1+s+t}\right\}.$
\end{conjecture}
The case $K_{1,6,8}$ is a particularly interesting unresolved instance; the
relevant coloring obstruction is closely related to the
Borodin--Kostochka conjecture~\cite{borodin1977upper}.

For any threshold parameter $\delta_*$, write
\[
\Delta_*^{(r)}:=\{\delta_*(H):\chi(H)=r\}.
\]
The preceding theorem shows that $1/2$ is an accumulation point of
$\Delta_{\mathrm{hom}}^{(3)}$.  Our next result lifts this phenomenon to every
chromatic number.
\begin{theorem}\label{thm: accumulation pts for hom}
For every integer $r\ge3$, the value
$\frac{r-2}{r-1}$ is an accumulation point of
$\Delta_{\mathrm{hom}}^{(r)}$.
\end{theorem}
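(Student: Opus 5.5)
The plan is to lift the $3$-chromatic family $K_{1,s,s}$ to chromatic number $r$ by joining with a clique. Set $H_s := K_{r-3}\vee K_{1,s,s}$, that is, $K_{r-2,s,s}$ with one part of size $1$ replaced by a clique of order $r-2$; equivalently $H_s=K_{1,\dots,1,s,s}$ with $r-2$ singleton parts. Then $\chi(H_s)=r$. I aim to show that
\[
\delta_{\mathrm{hom}}(H_s)\ge \frac{r-3+ s/(2s+1)}{r-2+s/(2s+1)}
\]
and that these values tend to $\frac{r-2}{r-1}$ from below as $s\to\infty$. It is not enough that they are distinct: I also need them to be strictly less than $\frac{r-2}{r-1}$, so I need an upper bound. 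This comes for free from the known general fact $\delta_{\mathrm{hom}}(H)\le \delta_{\mathrm{hom}}$-type bounds below the Turán density. Alternatively, I can use a direct upper bound obtained by mimicking the upper-bound half of \cref{thm: general lowbd for hom chi=3 and exact values} through a join reduction, giving
\[
\delta_{\mathrm{hom}}(H_s)=\frac{(r-3)(2s+1)+s}{(r-2)(2s+1)+s}.
\]
These exact values are pairwise distinct and increase to $\frac{r-2}{r-1}$, which is exactly an accumulation point.

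\textbf{Lower bound via joining.} Take the lower-bound construction $G$ for $K_{1,s,s}$ from \cref{thm: general lowbd for hom chi=3 and exact values}. It is $K_{1,s,s}$-free, has minimum degree about $\gamma n$ with $\gamma$ close to $s/(2s+1)$, and admits no homomorphism into any bounded $K_{1,s,s}$-free graph. Join it with a balanced complete $(r-3)$-partite graph on parts of size $\beta n$, and choose the proportions so that all minimum degrees equalise at $\frac{r-3+\gamma}{r-2+\gamma}$ of the total order.

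The joined graph is $H_s$-free. Any copy of $H_s$ uses at most one vertex from each independent part of the complete multipartite side. The vertices inside $G$ then form a $K_{1,s,s}$ joined with some clique, which forces a copy of $K_{1,s,s}$ inside $G$ after deleting at most $r-3$ clique vertices, all of which lie in distinct parts. The precise bookkeeping is this: $H_s$ minus any set of $r-3$ vertices lying in distinct singleton parts still contains $K_{1,s,s}$, except when the removed vertices include large-part vertices. That case leaves $K_{r-2,\dots}$ remnants, which still contain a triangle plus structure; I must verify that these remnants are not in $G$, using that $G$ is $K_{1,s,s}$-free and, if necessary, $K_4$-free.

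\textbf{Non-homomorphism.} Suppose the join maps homomorphically into a bounded $H_s$-free graph $F$. Take an $(r-3)$-clique in the image of the multipartite side. Its common neighbourhood in $F$ contains the image of $G$, and that neighbourhood is $K_{1,s,s}$-free because $F$ is $H_s$-free. This yields a bounded $K_{1,s,s}$-free target for $G$, a contradiction.

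\textbf{Main obstacle.} I expect the hardest step to be the upper bound showing the values stay strictly below $\frac{r-2}{r-1}$. If an exact join reduction is unavailable, I would fall back on the weaker statement that $\delta_{\mathrm{hom}}(H_s)<\frac{r-2}{r-1}$ for each $s$. The route would be to show $\delta_{\mathrm{hom}}(H)<\frac{r-2}{r-1}$ whenever $H$ has a near-acyclic-type structure forcing $\delta_\chi(H)<\frac{r-2}{r-1}$, combined with regularity and stability: above $\frac{2r-5}{2r-3}+\epsilon$, an $H_s$-free graph is close to $(r-1)$-partite, and one then builds a bounded homomorphic image from the regularity partition. Infinitely many distinct values in $\bigl[\frac{2r-5}{2r-3},\frac{r-2}{r-1}\bigr)$ with supremum $\frac{r-2}{r-1}$ would then suffice.
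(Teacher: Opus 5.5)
There is a genuine gap for $r\ge4$. Your $r=3$ case is fine: the join is vacuous and you just quote $\delta_{\mathrm{hom}}(K_{1,s,s})=s/(2s+1)$. Your non-homomorphism step is also fine: an $(r-3)$-clique in the image of $T$ has a $K_{1,s,s}$-free common neighbourhood containing the image of $G$. The failure is that the join $G\vee T$ is \emph{not} $H_s$-free, and the ``bookkeeping'' you flagged is exactly where it breaks. Each huge independent part $T_i$ can absorb an entire part of $H_s=K_{1,\dots,1,s,s}$, and the embedding, not you, decides which three parts are left for $G$. For $r\ge5$, send both $s$-parts to $T_1,T_2$ and $r-5$ singletons to $T_3,\dots,T_{r-3}$. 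Then $G$ only has to contain a triangle, and the $K_{1,s,s}$-construction contains the $(s+1)$-cliques $A_j$. For $r=4$, send one $s$-part to $T_1$. Then $G$ only has to contain $K_{1,1,s}$, and it does: take an edge of some $B_i$ together with $s$ vertices of $Y$. Separately, your displayed formula $\frac{r-3+s/(2s+1)}{r-2+s/(2s+1)}$ tends to $\frac{2r-5}{2r-3}$, not $\frac{r-2}{r-1}$. Equalising degrees in the join actually gives $\frac{\gamma+(r-3)(1-\gamma)}{1+(r-3)(1-\gamma)}$.

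No other choice of $G$ can rescue this, because the family itself is wrong. One has $\delta_{\mathrm{hom}}(K_{r-3}\vee K_{1,s,s})\le\frac{r-2}{r-1}-\eta_0$ with $\eta_0=\eta_0(r)>0$ independent of $s$, which contradicts your claimed lower bound for large $s$. To see this, note that $H_s$ has the form $K_{1,s_1,\dots,s_{r-1}}$, so Lemma~\ref{lem:AES-almost-partite} applies to an $H_s$-free $G$ with $\delta(G)\ge(\frac{r-2}{r-1}-\eta)n$. As in the paper's upper bound for $H_{r,s}$, this yields parts $V_1,\dots,V_{r-1}$ with $d_G(v,V_i)\ge(1-\alpha)|V_i|$ for all $v\notin V_i$, where $\alpha=O_r(\eta)$. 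Suppose $uv$ is an edge inside $V_1$, and pick common neighbours greedily in $V_2,\dots,V_{r-3}$. The common neighbourhoods $N_{r-2}\subseteq V_{r-2}$ and $N_{r-1}\subseteq V_{r-1}$ of these $r-2$ vertices are then linear in size. Every vertex of $N_{r-2}$ has at least $(1-(r-1)\alpha)|V_{r-1}|$ neighbours in $N_{r-1}$. So K\H{o}v\'ari--S\'os--Tur\'an gives a $K_{s,s}$, and hence a copy of $H_s$, as soon as $\alpha<1/(r-1)$; this condition does not involve $s$. Therefore every $V_i$ is independent and $G\to K_{r-1}$.

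The missing idea is a family in which the internal configuration that extends to $H$ grows with $s$, so the slack below $\frac{r-2}{r-1}$ must shrink with $s$. The paper uses $H_{r,s}=K_{1,s-1,s,\dots,s}$, where an internal $K_{1,s-1}$ is what extends. It proves the lower bound $\frac{r-2}{r-1}\cdot\frac{s-1}{s}$ not by a join but by a special blow-up of $K_{(r-1)s}$ on the grid $[r-1]\times[s]$. That blow-up uses matchings on same-row and same-column pairs and co-matchings elsewhere. It is $H_{r,s}$-free, yet it forces $K_{(r-1)s}$ in every bounded homomorphic image.
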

Thus the $r$-chromatic homomorphism spectrum is infinite for every $r\ge3$.
Moreover, its accumulation point is exactly the largest of the three
chromatic-threshold values in~\eqref{eq:chromatic threshold values}.

\subsection{VC-dimension thresholds}
VC-dimension measures the complexity of the neighborhood set system of a
graph.  Its relevance to minimum-degree problems first appeared in the work
of \L{}uczak and Thomass\'e~\cite{luczak_coloring_2010}, building on an
observation of Brandt~\cite{Brandt_Cube}: maximal triangle-free graphs above
the $1/3$ minimum-degree threshold have bounded VC-dimension, and this bounded
complexity can be converted into bounded chromatic number.  Maximality is
essential: without it, sparse parts carrying an arbitrary set system can be
added without affecting the dense structure.  This led Huang,
Liu, Rong and Xu~\cite{huang_interpolating_2025} to the following notion of \emph{VC-dimension threshold}:
\[
\delta_{\mathrm{VC}}(H):=
\inf\Bigl\{\gamma\ge 0:\exists\, C=C(\gamma,H)
\textup{ s.t. \(\forall\) maximal \(H\)-free \(G\), if }
\delta(G)\ge \gamma |V(G)|,
\textup{ then } \VCdim(G)\le C\Bigr\}.
\]

\noindent
We determine this threshold for every complete tripartite graph.
\begin{theorem}\label{thm:complete results on tripartite graphs on VC}
For all integers $1\le r\le s\le t$,
\[
\delta_{\mathrm{VC}}(K_{r,s,t})=
\begin{cases}
\frac13, & r=s=1,\\[2mm]
\frac{s}{2s+1}, & r=1,\ s\ge2,\ t\ge2s-2,\\[2mm]
\frac12, & \text{otherwise.}
\end{cases}
\]
\end{theorem}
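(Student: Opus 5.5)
The proof splits into matching lower bounds (constructions of maximal $H$-free graphs with large minimum degree and unbounded VC-dimension) and upper bounds (bounded VC-dimension above the threshold). I would treat the three regimes in turn.

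\textbf{Case $r=s=1$.} The lower bound is $\delta_{\mathrm{VC}}(K_{1,1,t})\ge\delta_{\mathrm{VC}}$-type bound $1/3$. I would get it from the known construction for triangles: blow up a Borsuk--Hajnal or Kneser-type graph inside a maximal $K_{1,1,t}$-free completion. I would check that maximality does not destroy the shattered set; this works because a dense triangle-free part with minimum degree just below $n/3$ already carries an arbitrary set system. For the upper bound I would follow \L{}uczak--Thomass\'e and Brandt. In a maximal $K_{1,1,t}$-free graph with $\delta(G)\ge(1/3+\eps)n$, every non-edge $uv$ has at least $t$-type common structure, namely a copy of $K_{1,1,t}$ is created by adding $uv$. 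A shattered set of size $d$ then forces many pairwise "almost disjoint" neighborhoods, and counting degrees gives $d=O_\eps(1)$. The known results for $\delta_\chi(K_{1,1,t})=1/3$ should adapt with little change.

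\textbf{Case $r=1$, $s\ge2$, $t\ge2s-2$: the value $s/(2s+1)$.} For the lower bound I would take the extremal example behind Theorem~\ref{thm: general lowbd for hom chi=3 and exact values}: a balanced blowup of $C_5$-like or $K_{1,s,t}$-avoiding structure with density $s/(2s+1)$. I would attach a saturated gadget carrying an arbitrary bipartite set system, arranged so that any maximal $K_{1,s,t}$-free completion keeps the shattering. Roughly, the gadget vertices have neighborhoods whose common neighborhoods are too small to host $K_{s,t}$ minus a vertex, and this is exactly where $t\ge 2s-2$ is used.

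For the upper bound, suppose $\delta(G)\ge(s/(2s+1)+\eps)n$ and $G$ is maximal $K_{1,s,t}$-free. Adding any non-edge $uv$ creates a $K_{1,s,t}$. So either $u$ and $v$ have a common neighborhood containing $K_{s-1,t}$ (or $K_{s,t-1}$), or one of them is the apex. Using supersaturation and the regularity lemma, I would show that the neighborhoods of a large shattered set would yield a $K_{1,s,t}$ once the degree exceeds $s/(2s+1)$. The quantity $s/(2s+1)$ arises from the balance $|N(u)\cap N(v)|$ versus $n-|N(u)\cup N(v)|$ over $2s+1$ classes.

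\textbf{Otherwise: the value $1/2$.} The upper bound $1/2$ is general: above $1/2$, $G$ contains few non-edges, and $K_{r,s,t}$-freeness plus maximality forces a near-complete-bipartite structure with bounded VC-dimension. Alternatively it follows from the Andr\'asfai--Erd\H{o}s--S\'os-type stability of $K_3$-free-like regimes. For the lower bound I need constructions at density $1/2-\eps$ that are maximal $K_{r,s,t}$-free with unbounded VC-dimension. When $r\ge2$, I would take a complete bipartite graph $K_{n/2,n/2}$ and embed inside one side a sparse graph encoding an arbitrary set system. This creates no $K_{2,s,t}$ because each part needs two adjacent vertices with large common neighborhoods. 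When $r=1$ and $t<2s-2$, I would use a similar construction in which $t<2s-2$ prevents $K_{1,s,t}$ from forming across the gadget.

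The main obstacle is the upper bound in the middle case, specifically turning maximality into a bound on the VC-dimension near $s/(2s+1)$. Second to that is checking that the gadgets survive maximal completion in the lower bounds.
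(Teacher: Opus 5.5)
Your overall plan (saturated gadgets that survive maximal completion for the lower bounds, structure plus maximality for the upper bounds) has the right shape, but two load-bearing steps fail.

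\textbf{First, the lower-bound construction for the value $1/2$ cannot work.} Placing a graph $S$ inside one side of $K_{n/2,n/2}$ produces exactly the join $S\vee\overline{K}_{n/2}$. This graph is $K_{r,s,t}$-free only if $S$ is $K_{r,s}$-free, because a $K_{r,s}$ in $S$ together with any $t$ vertices of the other side is a $K_{r,s,t}$ (for $r=1$ this means $\Delta(S)<s$). But $K_{r,s}$-free graphs have bounded VC-dimension. If $T$ is shattered with $|T|=d\ge r$, then for any $r$-subset $Q\subseteq T$ the $2^{d-r}$ vertices realising the traces that contain $Q$ are common neighbours of $Q$, so $d<r+\log_2 s$. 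By Fact~\ref{fact:VC-of-join} the join has VC-dimension at most one larger. So no choice of $S$ gives unbounded VC-dimension, even before taking a maximal completion, which you also do not control. Your stated reason is also off: the parts of $K_{r,s,t}$ are independent sets.

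The missing idea is that each reservoir must see only half of the gadget. The paper takes reservoir sets $X,Y$ completely joined to each other, with $X$ complete to one family of blocks and $Y$ complete to the other. Then $G[N(x)]$ and $G[N(y)]$ are disjoint unions of small blocks that cannot contain the $K_{r,s}$ (for $r=1$, the $K_{1,s}$) lying in every vertex neighbourhood of $K_{r,s,t}$. These blocks are copies of $K_{r-1,t}$ and $K_{1,s}$ when $r\ge2$, and of $K_{s-1,t-s+2}$ and $K_{1,1,s-2}$ when $r=1$. In the latter case $\Delta(K_{s-1,t-s+2})<s$ is precisely the condition $t\le 2s-3$. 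Meanwhile, for $ij\notin E(F)$ the set $\{a_i\}\cup B_i\cup C_j\cup D_j$ (or its analogue) spans $K_{r,s,t}$ minus the single encoded edge $a_id_j$, so no maximal completion can add that edge.

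\textbf{Second, the upper bound $s/(2s+1)$ is missing its mechanism,} and you attach $t\ge 2s-2$ to the wrong half. The lower bound $s/(2s+1)$ holds for all $s\le t$. The paper uses reservoirs $X$ of size $sN$ and $Y_0,\dots,Y_s$ of size $N$, with $a_i^j$ complete to $Y_k$ iff $j\ne k$. The homomorphism construction you cite only has density $s/(1+s+t)$. The hypothesis $t\ge 2s-2$ is what drives the upper bound:
\begin{enumerate}[(1)]
\item Since $s/(2s+1)\ge 2/5$ for $s\ge2$, Lemma~\ref{lem:AES-almost-partite} gives $V(G)=A\cup B$ with internal degrees at most $\varepsilon n/2$.
\item Each side then has at most $(s+1)n/(2s+1)$ vertices, so any $s+1$ vertices on one side have linearly many common neighbours on the other. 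Hence $G[A]$ and $G[B]$ are $K_{1,s}$-free.
\item For $t\ge 2s-2$, every $2$-colouring of $V(K_{1,s,t})$ has a monochromatic $K_{1,s}$. So adding all missing $A$--$B$ edges creates no $K_{1,s,t}$.
\item Maximality forces $G=G[A]\vee G[B]$, whose VC-dimension is at most $s$.
\end{enumerate}
Regularity applied to a shattered set does not substitute for this ``complete the cut'' step.

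\textbf{Smaller gaps.} For $r=s=1$ the key point is that any triangle would, by inclusion--exclusion, have an edge with at least $\varepsilon n\ge t$ common neighbours. So $G$ is triangle-free, hence maximal triangle-free, and $\delta_{\mathrm{VC}}(K_3)=1/3$ applies. For the matching lower bound, a maximal triangle-free example is unprotected: a maximal $K_{1,1,t}$-free completion may add edges whose endpoints have fewer than $t$ common neighbours. So saturated non-edges are needed there too. Finally, the $1/2$ upper bound is just Erd\H{o}s--Stone: for large $n$ there are no $K_{r,s,t}$-free graphs with $\delta(G)\ge(1/2+\varepsilon)n$. Your ``few non-edges'' argument is not what is needed.
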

For a singleton smallest part and $s\ge2$, this gives a sharp phase transition
at $t=2s-2$; in all remaining tripartite cases the threshold is $1/2$.
The parameter is again non-monotone under induced subgraphs, since
\[
\delta_{\mathrm{VC}}(K_{1,3,3})=\frac12
>
\delta_{\mathrm{VC}}(K_{1,3,4})=\frac37.
\]
The theorem also makes $1/2$ an accumulation point of
$\Delta_{\mathrm{VC}}^{(3)}$.

Our second VC-dimension result is a universal lower bound, which implies that the VC-dimension threshold for every nonbipartite $H$ is always positive.
\begin{theorem}\label{thm:strict lowbd for delta_VC}
If $\chi(H)=r\ge3$, then
\[
\delta_{\mathrm{VC}}(H)
\ge
\frac{(r-3)(v(H)-1)+1}{(r-2)(v(H)-1)+1}
>
\frac{r-3}{r-2}.
\]
\end{theorem}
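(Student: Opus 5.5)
We will construct, for every $\gamma<\frac{(r-3)(h-1)+1}{(r-2)(h-1)+1}$ with $h=v(H)$, a family of maximal $H$-free graphs $G$ with $\delta(G)\ge\gamma|V(G)|$ and unbounded VC-dimension. The starting point is a blowup of $K_{r-3}$ combined with a gadget part $W$ on which we plant an arbitrary set system. Concretely, take vertex classes $V_1,\dots,V_{r-3}$, each of weight $(h-1)$ units, and a further part $W$ of weight $1$ unit. The total is then $(r-3)(h-1)+(h-1)+1=(r-2)(h-1)+1$ units. All edges between distinct $V_i$ are present, and between $V_i$ and $W$. A vertex of $V_i$ misses only its own class, so its degree fraction is $\frac{(r-3)(h-1)+1}{(r-2)(h-1)+1}$ up to adjustment. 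Inside $W\cup$ (one extra class $U$ of weight $h-1$) we need a graph with no $K_3$-type obstruction that, together with the complete $(r-3)$-partite join, avoids $H$.

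The key point is that joining with $K_{r-3}$-blowups reduces the problem to a $3$-chromatic-type problem. If the graph $G_0$ on $U\cup W$ is such that its join with the $(r-3)$-partite graph is $H$-free, we are done. It suffices that $G_0$ does not contain any graph $H'$ obtained from $H$ by deleting $r-3$ independent sets, and each such $H'$ has chromatic number at least $3$. So we want $G_0$ to be $\{$graphs with $\chi\ge3$ on at most $h$ vertices$\}$-free. For example, $G_0$ can have odd girth greater than $h$, which makes the join $H$-free because every such $H'$ contains an odd cycle of length at most $h$. We let $G_0$ be bipartite between $U$ (size $(h-1)m$) and $W$ (size $m$) plus a sparse high-odd-girth gadget carrying a shattered set system. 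The sizing is chosen so that the vertices of $W$ are adjacent to all of $U$ except a sublinear part. Vertices of $U$ then have degree $(r-3)(h-1)+1$ units from $V$ and $W$, and vertices of $W$ have degree $(r-3)(h-1)+(h-1)$ units. This gives the claimed minimum degree.

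Next we complete $G$ to a maximal $H$-free graph. The main obstacle is ensuring that this completion neither destroys the minimum degree (which only increases) nor collapses the shattered structure. To handle this, we embed the shattering inside the neighbourhoods of a small set $S\subseteq W$ among vertices of $U$ via a sparse bipartite set system: $U$ contains $2^d$ vertices whose neighbourhoods on $S$ realize all subsets. We then argue that any added edge within $U$, within $W$, or within some $V_i$ would create a copy of $H$. The argument for within-class edges uses the fact that each class is large and an added edge $xy$ inside $V_i$ forms, together with the complete structure, a copy of $K_{r-1}$-blowup-like configurations containing $H$ by the choice of weight $h-1$. This is exactly why each $V_i$ and $U$ has weight $h-1$: an edge inside creates $h-1$ common neighbours in each other class, enough to embed $H$ with $\chi(H)=r$. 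The delicate case is edges between $U$ and $S$, which would destroy the traces. To address this, we protect the missing pairs by saturated gadgets, choosing for each non-edge $u s$ a small configuration in $U\cup W$ such that adding $us$ completes an odd cycle of length at most $h$ using the join. If maximal completion still adds some edges, the VC-dimension is preserved on the traces that survive, and we argue this by restricting to a shattered subfamily with a Ramsey/pigeonhole step. Finally, the strict inequality $\frac{(r-3)(h-1)+1}{(r-2)(h-1)+1}>\frac{r-3}{r-2}$ is immediate by cross-multiplication, since it reduces to $(r-2)>(r-3)$.
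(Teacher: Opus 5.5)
Your cross-multiplication for the strict inequality is fine. The construction, however, has a genuine gap exactly where the theorem is hard: keeping the encoded non-edges alive under maximal $H$-free completion.

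\textbf{Odd girth versus saturation.} You obtain $H$-freeness by giving $G_0$ odd girth larger than $h$. But then, for any non-edge $us$, every subgraph of $G_0+us$ on at most $h$ vertices becomes bipartite once $us$ is deleted. So $G_0+us$, joined with the $V_i$, contains $H$ only if some remainder $H'$ (obtained by deleting $r-3$ independent sets) is ``bipartite plus one edge''. Closing a short odd cycle is not the same as closing a copy of $H$. For $H=K_{1,2,2}$ (where $r=3$ and the only remainder is $H$ itself), $H-e$ contains a triangle for every edge $e$. Hence no non-edge of a graph with odd girth $>5$ is ever $H$-saturated, and your two requirements are incompatible.

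\textbf{Further problems in your layout.} Here $u\in U$ and $s\in S\subseteq W$ lie on opposite sides of the bipartite graph $(U,W)$. So adding $us$ closes no odd cycle at all unless the gadget has edges inside a side. Yet an edge inside $U$ lies in a triangle, because every $U$-vertex must see almost all of $W$ to meet your degree bound. The fallback (``restrict to a shattered subfamily by Ramsey/pigeonhole'') has nothing to act on: $F$ is encoded by single vertices against a $d$-set, so a completion may add every missing $U$--$S$ edge and erase all traces. Separately, your claim that an edge added inside $V_i$ or $U$ creates $H$ is false in general, since a complete $(r-1)$-partite graph plus one edge need not contain $K_{1,2,2}$; you do not need that claim anyway.

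\textbf{Sanity check.} Nothing in your $H$-freeness or protection argument uses the ratio $|U|:|W|$. If the scheme worked, taking $|U|=|W|$ would give $\delta_{\mathrm{VC}}(H)\ge\frac{r-2}{r-1}$ for every $r$-chromatic $H$, contradicting the known bound $\delta_{\mathrm{VC}}(C_5)\le\frac15$. The weight $h-1$ is reverse-engineered from the target.

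\textbf{What the paper does instead.} It supplies three ideas missing here.
\begin{enumerate}
\item Instead of forbidding all short odd cycles, it chooses a core $K_0$ (on $k\le v(H)$ vertices) that is minimal in the homomorphism order among cores of all remainders $H[B]$. Then any remainder mapping to $K_0$ has core exactly $K_0$. It builds a cluster gadget $G_0$ that maps to $K_0$ but is $K_0$-free: $(U_1,U_2)$ is a $t$-blowup of $F$, $(U_1,U_j)$ for the other neighbours $j$ of $1$ is a $t$-blowup of $\overline F$, and the remaining core edges join equally indexed clusters. A copy of $K_0$ would then force one pair to be both an edge and a non-edge of $F$.
\item Robustness comes from counting rather than from saturating individual non-edges. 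Destroying all $t^{2m}$ bi-induced copies of $F$ needs at least $t^2$ new edges, so some originally empty cluster pair receives at least $t^2/m^2$ of them. K\H{o}v\'ari--S\'os--Tur\'an gives a $K_{v,v}$ there. The $\overline F$-encoding turns this patch into a blowup of $K_0$, hence a copy of $H_0$, hence of $H$ using $Y_1,\dots,Y_{r-3}$.
\item The minimum degree comes from reservoirs $X_1,\dots,X_k$ of size $N$, attached along an out-degree-one orientation $D$ and a spanning forest $S$ of $K_0$, together with $|Y_i|=(k-1)N$. This is the actual source of the $k-1$ in the bound. The sink-of-$S[I]$ argument then shows that a copy of $K_0$ cannot use reservoir vertices.
\end{enumerate}
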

The theorem shows that the smallest value in
the chromatic spectrum never occurs as a VC-dimension threshold.  For
$H=C_{2k+1}$, the theorem gives
$\delta_{\mathrm{VC}}(C_{2k+1})\ge1/(2k+1)$, matching the upper bound from
\cite{huang_interpolating_2025}; consequently
$\delta_{\mathrm{VC}}(C_{2k+1})=\frac1{2k+1}.$

\subsection{Separation of the four thresholds}
A blowup of a graph $F$ is obtained by replacing each vertex of $F$ by an
independent set and each edge by a complete bipartite graph. 
We write $G=F[\cdot]$ to mean that $G$ is a blowup of $F$.
A natural extension of the homomorphism threshold is the following \emph{blowup threshold}:
\[
\delta_{\mathrm{B}}(H):=
\inf\big\{\gamma\ge 0:\exists~F=F(\gamma,H)\textup{ s.t. \(\forall\) maximal \(H\)-free \(G\), if }
\delta(G)\ge \gamma |V(G)|,
\textup{ then } G=F[\cdot]\big\}.
\]

A bounded blowup representation gives both a bounded $H$-free homomorphic
target and bounded VC-dimension.  Hence
\[
\delta_\chi(H)\le\delta_{\mathrm{hom}}(H)\le\delta_{\mathrm B}(H),
\qquad
\delta_{\mathrm{VC}}(H)\le\delta_{\mathrm B}(H).
\]
The homomorphism and VC-dimension thresholds lie on different branches of this
hierarchy, so there is no formal comparison between them.  The blowup
threshold was introduced systematically in
\cite{huang_interpolating_2025}; see also the recent developments in
\cite{huang2026spectrum}.

Combining our results with known blowup thresholds separates every pair of the four parameters.
\begin{corollary}\label{thm:chi hom VC and B are distinct}
The four graph parameters
$\delta_\chi$, $\delta_{\mathrm{hom}}$, $\delta_{\mathrm{VC}}$ and
$\delta_{\mathrm B}$ are pairwise distinct.
\end{corollary}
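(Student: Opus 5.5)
To prove the corollary, I will exhibit, for each of the six pairs among $\delta_\chi,\delta_{\mathrm{hom}},\delta_{\mathrm{VC}},\delta_{\mathrm B}$, a graph $H$ on which the two parameters differ. Wherever possible I will use only the theorems stated above together with the known value $\delta_\chi(K_{1,s,t})=1/3$ from \cite{ALLEN2013261}, and the chain
\[
\delta_\chi\le\delta_{\mathrm{hom}}\le\delta_{\mathrm B},\qquad \delta_{\mathrm{VC}}\le\delta_{\mathrm B}.
\]
Any upper bound for $\delta_{\mathrm B}$ still needed will be quoted from \cite{huang_interpolating_2025,huang2026spectrum}.

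The first three pairs come from $H=K_{1,2,2}$.

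\begin{itemize}
\item \textbf{$\delta_\chi$ versus $\delta_{\mathrm{hom}}$.} \cref{thm: general lowbd for hom chi=3 and exact values} with $s=t=2$ applies, since $(t-s+1)^2=1\le 2$ and $3s=6\ge 2+2t=6$. It gives $\delta_{\mathrm{hom}}(K_{1,2,2})=2/5$, while $\delta_\chi(K_{1,2,2})=1/3$.
\item \textbf{$\delta_\chi$ versus $\delta_{\mathrm{VC}}$.} By \cref{thm:complete results on tripartite graphs on VC}, $\delta_{\mathrm{VC}}(K_{1,2,2})=1/2$, since $t=2\ge 2s-2=2$. This differs from $\delta_\chi(K_{1,2,2})=1/3$.
\item \textbf{$\delta_{\mathrm{hom}}$ versus $\delta_{\mathrm{VC}}$.} For the same graph, $\delta_{\mathrm{hom}}(K_{1,2,2})=2/5\neq 1/2=\delta_{\mathrm{VC}}(K_{1,2,2})$.
\end{itemize}

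The remaining three pairs use different graphs.

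\begin{itemize}
\item \textbf{$\delta_{\mathrm{hom}}$ versus $\delta_{\mathrm B}$.} Since $\delta_{\mathrm{VC}}\le\delta_{\mathrm B}$, we get $\delta_{\mathrm B}(K_{1,2,2})\ge 1/2>2/5=\delta_{\mathrm{hom}}(K_{1,2,2})$.
\item \textbf{$\delta_\chi$ versus $\delta_{\mathrm B}$.} This then follows at once: $\delta_{\mathrm B}(K_{1,2,2})\ge 1/2>1/3$.
\item \textbf{$\delta_{\mathrm{VC}}$ versus $\delta_{\mathrm B}$.} This is the only pair not settled by the stated theorems alone. The natural candidate is an odd cycle $C_{2k+1}$ with $k\ge2$, for which $\delta_{\mathrm{VC}}(C_{2k+1})=1/(2k+1)$. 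I then need $\delta_{\mathrm B}(C_{2k+1})>1/(2k+1)$. I would check in \cite{huang_interpolating_2025,huang2026spectrum} for a known blowup-threshold value or lower bound for odd cycles exceeding $1/(2k+1)$.
\end{itemize}

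Failing that, I would fall back on a complete tripartite graph $K_{1,s,t}$ with $t\ge 2s-2$, where $\delta_{\mathrm{VC}}=s/(2s+1)<1/2$. The plan there is a construction showing that maximal $K_{1,s,t}$-free graphs of minimum degree just below $1/2$ need not be bounded blowups. A blowup of $C_5$ with an extra saturated gadget is one option; more simply, one can take nearly balanced complete bipartite-like graphs with a sparse matching-type perturbation made maximal, which yields unboundedly many distinct neighbourhood classes. This gives $\delta_{\mathrm B}(K_{1,s,t})=1/2$, though a precise citation would be preferable.

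The main obstacle is this last pair: it is the only one depending on input beyond the theorems stated above. All the other pairs are immediate arithmetic from those theorems.
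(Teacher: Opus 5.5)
\paragraph{Misreading at $K_{1,2,2}$.} You misread Theorem~\ref{thm:complete results on tripartite graphs on VC} at $K_{1,2,2}$, and this breaks two of your six separations. The case $r=1$, $s\ge 2$, $t\ge 2s-2$ is the one where the threshold equals $s/(2s+1)$. $K_{1,2,2}$ falls in this case since $t=2=2s-2$, so $\delta_{\mathrm{VC}}(K_{1,2,2})=2/5$, not $1/2$. Your fallback paragraph states the correct formula, which contradicts your second bullet. Hence $\delta_{\mathrm{hom}}(K_{1,2,2})=\delta_{\mathrm{VC}}(K_{1,2,2})=2/5$. Your third bullet is therefore false. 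Your fourth bullet only yields $\delta_{\mathrm B}(K_{1,2,2})\ge 2/5$, which does not separate $\delta_{\mathrm B}$ from $\delta_{\mathrm{hom}}$. The three separations from $\delta_\chi$ survive with corrected numbers, e.g.\ $\delta_{\mathrm B}(K_{1,2,2})\ge\delta_{\mathrm{hom}}(K_{1,2,2})=2/5>1/3$.

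The paper separates $\delta_{\mathrm{hom}}$ from $\delta_{\mathrm{VC}}$ with $K_{1,3,3}$. There $(t-s+1)^2=1\le 3$ and $3s=9\ge 8=2+2t$, so $\delta_{\mathrm{hom}}(K_{1,3,3})=3/7$ by Theorem~\ref{thm: general lowbd for hom chi=3 and exact values}. Since $t=3<4=2s-2$, $K_{1,3,3}$ is in the `otherwise' case, so $\delta_{\mathrm{VC}}(K_{1,3,3})=1/2$. This graph also settles $\delta_{\mathrm{hom}}$ versus $\delta_{\mathrm B}$ using only the paper's theorems, since $\delta_{\mathrm B}(K_{1,3,3})\ge\delta_{\mathrm{VC}}(K_{1,3,3})=1/2>3/7$.

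\paragraph{The pair $\delta_{\mathrm{VC}}$ versus $\delta_{\mathrm B}$.} Your proposal leaves this pair unproved. Nothing in the paper gives $\delta_{\mathrm B}(C_{2k+1})>1/(2k+1)$, so the odd-cycle route is unsupported. A $C_5$-blowup has minimum degree at most $2n/5$, so it cannot by itself witness anything near $1/2$. The paper uses $K_{1,2,2}$, where $\delta_{\mathrm{VC}}=2/5$ while $\delta_{\mathrm B}=1/2$ is quoted from \cite{huang2026spectrum}.

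Your matching idea can be turned into a proof of the needed lower bound. With $4\mid n$, let $G=M_A\vee M_B$ be the join of perfect matchings on two halves $A,B$.
\begin{itemize}
\item $G$ is $K_{1,2,2}$-free. For $a\in A$ with partner $a'$, $G[N(a)]$ consists of $M_B$ together with $a'$ joined to all of $B$, and this graph contains no $C_4$. The same holds for vertices of $B$ by symmetry.
\item $G$ is maximal. Every non-edge is a non-partner pair $a_1,a_2$ inside one side. Adding $a_1a_2$ creates the $4$-cycle $a_2b_1a_1'b_2$ inside $N(a_1)$, hence a copy of $K_{1,2,2}$.
\item $G$ is not a bounded blowup. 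We have $\delta(G)=n/2+1$, and all $n$ neighbourhoods $B\cup\{a'\}$, $A\cup\{b'\}$ are distinct. So $G$ is not a blowup of any fixed $F$ once $n>|V(F)|$.
\end{itemize}
Thus $\delta_{\mathrm B}(K_{1,2,2})\ge 1/2>2/5=\delta_{\mathrm{VC}}(K_{1,2,2})$. Without this argument or the citation, the last separation is missing.
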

\begin{proof}
For $H=K_{1,2,2}$,
\[
\delta_\chi(H)=\frac13,
\qquad
\delta_{\mathrm{hom}}(H)=\delta_{\mathrm{VC}}(H)=\frac25,
\qquad
\delta_{\mathrm B}(H)=\frac12,
\]
where the first equality follows from~\cite{ALLEN2013261}, the middle two
from our theorems, and the last from~\cite{huang2026spectrum}.  To separate
$\delta_{\mathrm{hom}}$ from $\delta_{\mathrm{VC}}$, take
$H=K_{1,3,3}$, for which
\[
\delta_{\mathrm{hom}}(H)=\frac37
\qquad\text{and}\qquad
\delta_{\mathrm{VC}}(H)=\frac12.\qedhere
\]
\end{proof}

Despite these separations, the VC-dimension and blowup thresholds retain a
coarse ordering by chromatic number.
\begin{cor}\label{cor:separate thresholds by chromatic number}
If $2\le\chi(H_1)<\chi(H_2)$, then
$\delta_{\mathrm{VC}}(H_1)
\le\delta_{\mathrm B}(H_1)
<\delta_{\mathrm{VC}}(H_2)
\le\delta_{\mathrm B}(H_2).$
\end{cor}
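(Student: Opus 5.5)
We combine the bounds on $\delta_{\mathrm B}$ for graphs of a given chromatic number with the lower bound of \cref{thm:strict lowbd for delta_VC}. Write $r_1=\chi(H_1)$ and $r_2=\chi(H_2)$, with $r_1<r_2$. The outer inequalities $\delta_{\mathrm{VC}}(H_i)\le\delta_{\mathrm B}(H_i)$ are immediate, as noted after the definition of $\delta_{\mathrm B}$: a bounded blowup representation $G=F[\cdot]$ forces $\VCdim(G)\le \log_2|V(F)|$, because vertices in the same part have identical neighborhoods. So the only content is the middle strict inequality $\delta_{\mathrm B}(H_1)<\delta_{\mathrm{VC}}(H_2)$.

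For the upper side, I will use the known upper bound $\delta_{\mathrm B}(H)\le \frac{r-2}{r-1}$ for $\chi(H)=r$; this holds for $r=2$ as well, giving $0$. I expect it to be available from \cite{huang_interpolating_2025}. It is the blowup analogue of the trivial upper bound on the chromatic threshold. The reason it holds: above density $\frac{r-2}{r-1}+\eps$, an $H$-free graph is close to $(r-1)$-partite by the Andr\'asfai--Erd\H{o}s--S\'os/stability theory. A maximal $H$-free graph then becomes an exact blowup of a bounded graph by a regularity-and-cleaning argument. If this bound is not stated in that reference in exactly this form, I would cite it from \cite{huang2026spectrum}. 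Hence
\[
\delta_{\mathrm B}(H_1)\le \frac{r_1-2}{r_1-1}\le \frac{r_2-3}{r_2-2},
\]
where the second inequality holds because $r_1\le r_2-1$ and $x\mapsto\frac{x-2}{x-1}$ is increasing.

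For the lower side, apply \cref{thm:strict lowbd for delta_VC} to $H_2$, which is valid because $r_2\ge3$:
\[
\delta_{\mathrm{VC}}(H_2)\ge\frac{(r_2-3)(v(H_2)-1)+1}{(r_2-2)(v(H_2)-1)+1}>\frac{r_2-3}{r_2-2}.
\]
This strict inequality is exactly what is needed. For $r_2=3$ it reads $\delta_{\mathrm{VC}}(H_2)>0$, which is compared with $\delta_{\mathrm B}(H_1)=0$ for bipartite $H_1$. Chaining the two displays gives $\delta_{\mathrm B}(H_1)\le\frac{r_2-3}{r_2-2}<\delta_{\mathrm{VC}}(H_2)$, completing the argument.

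The main obstacle is the upper bound $\delta_{\mathrm B}(H)\le\frac{r-2}{r-1}$. For $r=2$ it needs particular care: we need that maximal $H$-free graphs with linear minimum degree, for bipartite $H$, are bounded blowups. This holds vacuously for large $n$, since by K\H{o}v\'ari--S\'os--Tur\'an such graphs have $o(n^2)$ edges and so cannot have linear minimum degree. For $r\ge3$ I would rely on the cited result rather than reprove it. Everything else is the monotonicity comparison above.
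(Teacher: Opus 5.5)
Your argument is correct and is the paper's: the outer inequalities come from the definitions (your $\log_2|V(F)|$ bound makes this explicit), and the middle one chains $\delta_{\mathrm B}(H_1)\le\frac{r_1-2}{r_1-1}\le\frac{r_2-3}{r_2-2}$ with \Cref{thm:strict lowbd for delta_VC} applied to $H_2$. The one misstep is calling $\delta_{\mathrm B}(H)\le\frac{r-2}{r-1}$ the main obstacle and sketching it via stability and regularity-and-cleaning, which describes the regime below that density rather than above it. Above that minimum degree there are no large $H$-free graphs at all, by Erd\H{o}s--Stone. So your vacuity argument for $r=2$ works verbatim for every $r$, with Erd\H{o}s--Stone in place of K\H{o}v\'ari--S\'os--Tur\'an. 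This is exactly how the paper obtains the bound, and no citation is needed.
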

Indeed, if $\chi(H_1)=r_1$ and $\chi(H_2)=r_2$, then the Erd\H{o}s--Stone
theorem gives
$\delta_{\mathrm B}(H_1)\le(r_1- \nolinebreak 2)/(r_1- \nolinebreak 1)$, whereas
\Cref{thm:strict lowbd for delta_VC} gives
$\delta_{\mathrm{VC}}(H_2)>(r_2-3)/(r_2-2)\ge(r_1-2)/(r_1-1)$.

\subsection{Chromatic thresholds under bounded VC-dimension}
There is no direct formal comparison between $\delta_\chi(H)$ and
$\delta_{\mathrm{VC}}(H)$. However,
bounded VC-dimension helps with bounding chromatic number, as shown by \L{}uczak and Thomass\'e~\cite{luczak_coloring_2010}. To measure this influence, the following \emph{bounded-VC chromatic threshold} was introduced in~\cite{liu_beyond_2024}. For a graph $H$, let $\delta_{\chi}^{\mathrm{VC}}(H)$ be the infimum of all
$\gamma>0$ such that, for every $d\ge1$, every $H$-free graph $G$ with
$\delta(G)\ge\gamma |V(G)|$ and $\VCdim(G)\le d$ has chromatic number at
most $C(\gamma,d,H)$. In this notion, $\delta_{\chi}^{\mathrm{VC}}(K_3)=0$ whereas $\delta_{\chi}(K_3)=1/3$.

By definition, we have $\delta_{\chi}^{\mathrm{VC}}(H)\le \delta_\chi(H)$ and
\begin{equation}\label{eq:chromatic threshold via VC}
\delta_\chi(H)
\le
\max\bigl\{\delta_{\mathrm{VC}}(H),
\delta_{\chi}^{\mathrm{VC}}(H)\bigr\}.
\end{equation}
We determine the bounded-VC chromatic threshold completely.  Recall that, for a graph $H$ with $\chi(H)=r$, the decomposition family $\mathcal M(H)$ consists of all graphs
$H[B]$ arising from a partition
$V(H)=A_1\cup\cdots\cup A_{r-2}\cup B$ in which $A_1,\ldots,A_{r-2}$ are independent.
\begin{theorem}\label{thm:value for graphs of delta_chi(VC)}
Let $\chi(H)=r\ge 3$. Then
$\delta_{\chi}^{\mathrm{VC}}(H)
\in
\left\{\frac{r-3}{r-2},\frac{r-2}{r-1}\right\},$
and the larger value occurs if and only if $\mathcal M(H)$ contains no
forest.
\end{theorem}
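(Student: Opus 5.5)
We prove the two bounds separately. Throughout, $r=\chi(H)\ge3$.

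\emph{Lower bound $\delta_{\chi}^{\mathrm{VC}}(H)\ge\frac{r-3}{r-2}$.} This holds for every $H$. Take a graph $G_0$ of bounded VC-dimension with arbitrarily large chromatic number, girth larger than $v(H)$, and $o(n)$ minimum degree relative to the final order; shift graphs or Kneser-type graphs are natural candidates. Join $G_0$ completely to a balanced complete $(r-3)$-partite graph on $(1-\eps)n$ vertices. The minimum degree is then about $\frac{r-3}{r-2}n$, and the VC-dimension stays bounded, since joining to a complete multipartite graph adds only boundedly many neighbourhood types. The chromatic number is unbounded. The resulting graph is $H$-free: a copy of $H$ would need $r-3$ independent classes outside $G_0$, so $G_0$ would have to contain a non-bipartite subgraph of $H$, and this is impossible because of its high girth and its local bipartiteness at radius $v(H)$.

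\emph{Upper bound $\delta_{\chi}^{\mathrm{VC}}(H)\le\frac{r-2}{r-1}$.} This also holds for every $H$. Above $\frac{r-2}{r-1}+\eps$, Erd\H{o}s--Stone forces a copy of $H$ once $n$ is large. So every $H$-free graph with this minimum degree has bounded order, and hence bounded chromatic number.

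\emph{Forest case: upper bound $\frac{r-3}{r-2}$.} Suppose $\mathcal M(H)$ contains a forest. Let $G$ be $H$-free with $\delta(G)\ge(\frac{r-3}{r-2}+\eps)n$ and $\VCdim(G)\le d$. I would follow the Allen et al.\ strategy and replace their Borsuk--Hajnal/paired-VC step by a Haussler packing argument.
\begin{enumerate}[(i)]
\item Apply the regularity lemma. The minimum degree forces a dense $(r-2)$-partite structure, namely $K_{r-2}(t)$-type blowups in the reduced graph.
\item Use bounded VC-dimension, via Haussler's packing lemma / $\eps$-approximations, to partition $V(G)$ into $C(\eps,d)$ classes. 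Within each class, all neighbourhoods agree up to $\eps n$ symmetric difference.
\item Each class is sparse. Otherwise, a class together with the common neighbourhood structure given by $\delta(G)$ would yield $r-2$ large independent-ish parts plus a dense piece containing the forest $F\in\mathcal M(H)$, and together these embed $H$. Here we use that forests embed in any graph with linear average degree and bounded local structure, combined with a counting lemma over the common neighbourhoods.
\item Inside each class, argue by degeneracy that the sparse pieces have bounded chromatic number. This is the main obstacle: sparsity in $\eps n$ sense does not bound chromatic number. I would instead show that the vertices with a common typical neighbourhood span no copy of $F$ joined appropriately. Then the Allen et al.\ ``$F$-free in common neighbourhood'' lemma gives that each class is $O(1)$-colorable, using VC-dimension again to iterate.
\end{enumerate}

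\emph{No-forest case: lower bound $\frac{r-2}{r-1}$.} Suppose every member of $\mathcal M(H)$ contains a cycle, so each has girth at most $g:=v(H)$. Take a graph $B$ with girth greater than $g$, huge chromatic number, and bounded VC-dimension; Kneser-type or generalized shift-graph constructions with high odd girth, adapted to high girth, are candidates. Place $B$, suitably blown up so that it has linear density, as one part. Join it to a complete $(r-2)$-partite graph arranged as in the Allen et al.\ lower bound construction for $\frac{r-2}{r-1}$, so that the minimum degree is $(\frac{r-2}{r-1}-\eps)n$. This configuration is $H$-free because any copy of $H$ would place some member of $\mathcal M(H)$ inside the high-girth part. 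The VC-dimension is bounded because the construction is bounded-complexity apart from $B$.

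The delicate point in this case is achieving bounded VC-dimension together with high girth and large chromatic number for $B$. I would try a Borsuk-graph/sphere construction, using geometric neighbourhoods of bounded VC-dimension, in place of Allen et al.'s Borsuk--Hajnal graphs.
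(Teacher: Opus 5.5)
\emph{Forest-case upper bound.} Your outline has the right shape: Erd\H{o}s--Stone for $\frac{r-2}{r-1}$, and a Haussler partition for $\frac{r-3}{r-2}$. But steps (iii)--(iv) are not a proof. You aim to show each class is ``sparse'', which, as you note, bounds nothing, and then defer to an unspecified lemma. The regularity lemma is not needed at all. The missing idea is to show directly that every class $V_i$ is $F$-free.
\begin{enumerate}[(a)]
\item With $a=\eps n/(10h)$, any $W\subseteq V_i$ with $|W|\le h$ has at least $\delta(G)-2a|W|\ge(\frac{r-3}{r-2}+\frac{\eps}{2})n$ common neighbours.
\item Suppose $G[V_i]\supseteq F$ and set $W_1:=V(F)$. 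Given $W_1,\dots,W_j$ with $j\le r-2$, each inside one class, their common neighbourhood has at least $(1-\frac{j}{r-2}+\frac{j\eps}{2})n=\Omega(n)$ vertices.
\item By pigeonhole over the boundedly many classes, this common neighbourhood contains $h$ vertices of a single class; take these as $W_{j+1}$. Choosing them inside one class is exactly what keeps the next common neighbourhood large.
\item The sets $W_1,\dots,W_{r-1}$ are pairwise completely joined, so $H\subseteq G$ because $F\in\mathcal M(H)$, a contradiction.
\end{enumerate}
Finally, a minimal subgraph of chromatic number at least $h$ has minimum degree at least $h-1$, so it contains every forest on at most $h$ vertices by greedy embedding. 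Hence $\chi(G[V_i])<h$ and $\chi(G)<mh$.

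\emph{Lower bounds.} You leave open the existence of a graph with girth $>v(H)$, large chromatic number and bounded VC-dimension. The candidates you suggest (shift, Kneser and Borsuk graphs) all contain $C_4$ in the range where their chromatic number is large. In the no-forest case this is fatal. For $H$ the complete $r$-partite graph with all parts of size $2$, $\mathcal M(H)$ contains no forest but does contain $C_4$, so $B\vee T_{r-2}(N)\supseteq H$ whenever $C_4\subseteq B$.

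The ``delicate point'' is in fact elementary. A graph of girth at least $5$ is $C_4$-free and hence has VC-dimension at most $2$, since a shattered triple $\{x,y,z\}$ needs two vertices adjacent to both $x$ and $y$. So Erd\H{o}s's high-girth, high-chromatic graphs $J$ work as they are. Then $J\vee T_t(N)$ has VC-dimension at most $3$, because complete multipartite graphs have VC-dimension at most $2$ and a join raises it by at most one.

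Two further corrections. First, $J$ must have the same order $N$ as each part of $T_t(N)$. With $|V(G_0)|=\eps n$, the vertices of the $(r-3)$-partite part have degree only about $\frac{r-4}{r-3}n$. Second, $B$ must not be blown up. A blowup contains $K_{v(H),v(H)}$, whose join with $T_{r-2}(N)$ contains $H$. The internal density of $J$ is irrelevant anyway, since each vertex of $J$ is joined to all $tN$ vertices of $T_t(N)$.
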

This dichotomy parallels the decomposition-family criterion in the
classification of chromatic thresholds, but the intermediate chromatic value
disappears entirely.

The theorem supports the following general comparison.
\begin{conjecture}\label{conj:chromatic VC}
For every graph $H$,
$\delta_\chi(H)\le\delta_{\mathrm{VC}}(H).$
\end{conjecture}
\noindent
We prove the conjecture in two of the three cases in
\eqref{eq:chromatic threshold values}.
\begin{corollary}
Let $\chi(H)=r$.  If
$\delta_\chi(H)\in
\left\{\frac{r-3}{r-2},\frac{2r-5}{2r-3}\right\},$
then $\delta_\chi(H) \leq \delta_{\mathrm{VC}}(H)$.
\end{corollary}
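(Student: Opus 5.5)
Let $\chi(H)=r$. The plan is to treat the two values of $\delta_\chi(H)$ separately, using \Cref{thm:strict lowbd for delta_VC} for the smaller value and \Cref{thm:value for graphs of delta_chi(VC)} together with \eqref{eq:chromatic threshold via VC} for the intermediate value.

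\emph{Case $\delta_\chi(H)=\frac{r-3}{r-2}$.} This case is immediate. By \Cref{thm:strict lowbd for delta_VC},
\[
\delta_{\mathrm{VC}}(H)>\frac{r-3}{r-2}=\delta_\chi(H).
\]

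\emph{Case $\delta_\chi(H)=\frac{2r-5}{2r-3}$.} I will argue by contradiction. Suppose $\delta_{\mathrm{VC}}(H)<\delta_\chi(H)$. Then \eqref{eq:chromatic threshold via VC} gives
\[
\delta_\chi(H)\le\max\bigl\{\delta_{\mathrm{VC}}(H),\delta_\chi^{\mathrm{VC}}(H)\bigr\},
\]
and since the first entry of the maximum is strictly smaller than $\delta_\chi(H)$, we must have $\delta_\chi^{\mathrm{VC}}(H)\ge\delta_\chi(H)=\frac{2r-5}{2r-3}$. By \Cref{thm:value for graphs of delta_chi(VC)}, $\delta_\chi^{\mathrm{VC}}(H)$ takes only the values $\frac{r-3}{r-2}$ and $\frac{r-2}{r-1}$. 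We have
\[
\frac{r-3}{r-2}<\frac{2r-5}{2r-3},
\]
since cross-multiplying gives $(r-3)(2r-3)=2r^2-9r+9<2r^2-9r+10=(2r-5)(r-2)$. Hence $\delta_\chi^{\mathrm{VC}}(H)=\frac{r-2}{r-1}$, and by \Cref{thm:value for graphs of delta_chi(VC)} this means $\mathcal M(H)$ contains no forest.

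The remaining step is to derive a contradiction from $\mathcal M(H)$ containing no forest. I will use the classification of Allen, B\"ottcher, Griffiths, Kohayakawa and Morris~\cite{ALLEN2013261}: $\delta_\chi(H)=\frac{r-2}{r-1}$ exactly when $\mathcal M(H)$ contains no forest, which is the same decomposition-family criterion that the paper notes is mirrored in \Cref{thm:value for graphs of delta_chi(VC)}. So $\mathcal M(H)$ containing no forest forces $\delta_\chi(H)=\frac{r-2}{r-1}\neq\frac{2r-5}{2r-3}$, contradicting the case assumption. Therefore $\delta_{\mathrm{VC}}(H)\ge\delta_\chi(H)$.

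The only real obstacle is this last step: it requires confirming that the forest criterion in~\cite{ALLEN2013261} is stated with exactly the same decomposition family $\mathcal M(H)$ as in \Cref{thm:value for graphs of delta_chi(VC)}. Everything else is a direct combination of the earlier theorems and \eqref{eq:chromatic threshold via VC}.
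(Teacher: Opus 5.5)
Your proof is correct and uses the same ingredients as the paper's: the forest criterion of Allen et al., \Cref{thm:value for graphs of delta_chi(VC)}, \eqref{eq:chromatic threshold via VC} and \Cref{thm:strict lowbd for delta_VC}. The paper argues directly without a case split: a forest in $\mathcal M(H)$ gives $\delta_{\chi}^{\mathrm{VC}}(H)=\frac{r-3}{r-2}<\delta_{\mathrm{VC}}(H)$, so the maximum in \eqref{eq:chromatic threshold via VC} equals $\delta_{\mathrm{VC}}(H)$. The step you flag as the main obstacle is harmless: a forest $H[B]$ is bipartite, so the partition $A_1\cup\cdots\cup A_{r-2}\cup B$ extends to a proper $r$-coloring, and \emph{contains a forest} means the same under either definition of the decomposition family.
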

\begin{proof}
By the theorem of Allen et al.~\cite{ALLEN2013261}, the assumption is
equivalent to $\mathcal M(H)$ containing a forest. Hence,
\Cref{thm:value for graphs of delta_chi(VC)} gives
$\delta_{\chi}^{\mathrm{VC}}(H)=(r-3)/(r-2)$, while
\Cref{thm:strict lowbd for delta_VC} gives
$\delta_{\mathrm{VC}}(H)>(r-3)/(r-2)$.  The conclusion follows from
\eqref{eq:chromatic threshold via VC}.
\end{proof}
\noindent
Thus only the top chromatic-threshold case
$\delta_\chi(H)=(r-2)/(r-1)$ remains open.

\subsection{Ideas and difficulties}

The proofs are governed by a tension that is largely absent from the ordinary
chromatic-threshold problem.  A homomorphism lower bound must survive every
map to a bounded target, while a VC-dimension lower bound must survive every
maximal $H$-free completion.  In both cases, the obstruction must also be
combined with large reservoirs that raise the minimum degree without creating
$H$.  Our lower-bound constructions therefore separate into a small
\emph{coding gadget} and a large \emph{degree-raising part}.  The structural
upper bounds proceed in the opposite direction: we first recover an almost
Tur\'an structure or finitely many neighborhood types, and then turn this
approximate information into a rigid finite description.

\paragraph{Homomorphism thresholds.}
The central difficulty in a homomorphism threshold lower bound is that large chromatic
number is not enough.  For every fixed target size, we need an $H$-free graph
whose every homomorphic image of that size already contains $H$.  For
$K_{1,s,t}$, we achieve this with a random coding gadget formed from two
families of cliques, placing one randomly positioned edge between every pair
of cliques from the two families.  In a bounded target, many cliques must have
the same ordered image.  The random edge positions then realise every possible
pair of positions between two repeated image patterns, forcing their union to
be a large clique.  Thus the gadget can be incorporated into an $H$-free
source, but becomes complete after bounded homomorphic compression. This gadget alone has very small minimum degree, so it is embedded into a
much larger system of degree reservoirs.  The attachments are balanced, so
that every vertex receives the required proportion of reservoir neighbours,
but deliberately incomplete, so that no neighborhood contains the complete
bipartite pattern needed to form $K_{1,s,t}$.

The upper bound begins with a very different, structural argument:
$K_{1,s,t}$-freeness implies that the graph has few triangles, so the removal lemma together with the Andr\'asfai--Erd\H{o}s--S\'os theorem yields an almost
bipartite partition with very small internal maximum degree.  An
$o(n)$ internal-degree bound, however, is still far from a bounded
homomorphic target.  The key new step is a quantitative Brooks-type argument:
the large degree across the partition forces any sufficiently complicated
internal configuration---either a large clique or a vertex with too many
internal neighbours---to have many repeated common neighbours on the other
side, which would extend it to $K_{1,s,t}$.  This gives complementary coloring
bounds on the two parts whose sum is $s+t$, and hence a homomorphism to the
$K_{1,s,t}$-free clique $K_{s+t}$.  The numerical hypotheses in
\Cref{thm: general lowbd for hom chi=3 and exact values} record precisely the
range in which the almost-bipartite reduction and this Brooks-type conversion
can both be made sharp.

For the accumulation theorem, randomness is replaced by a deterministic
grid.  Clusters in a common row or column are joined by matchings, while all
other pairs are joined by complements of matchings.  These two types of pairs
behave oppositely in the source and in a bounded image.  The row--column
geometry creates an incompatibility that excludes the forbidden complete
multipartite graph from the source.  Yet a bounded homomorphism must repeat
two entire layers, and the matching and 
co-matching pairs then
together witness every adjacency among the repeated images, producing a large
clique.  This ``sparse before compression, complete after compression''
mechanism gives the lower bounds approaching $(r-2)/(r-1)$.  Conversely,
slightly below that value, an almost Tur\'an partition has cross pairs so
dense that any internal star extends to the forbidden graph; the parts
therefore have bounded chromatic number.  Together, these arguments make
$(r-2)/(r-1)$ an accumulation point rather than an attained value in this
family.

\paragraph{VC-dimension thresholds.}
The lower-bound difficulty is now maximality (namely, a construction needs to be maximal $H$-free). One can easily insert a
high-VC bipartite graph into an $H$-free construction, but the missing edges
encoding its set system may be added when the graph is completed to a maximal
$H$-free graph.  Our basic device is to protect every encoded nonedge by
making it already $H$-saturated: adding that single edge immediately creates
a copy of $H$.  Hence every maximal $H$-free completion preserves the
original bipartite graph as a bi-induced subgraph, and therefore preserves
its VC-dimension.  For complete tripartite graphs, these protected encodings
can be coupled with large reservoirs whose neighborhoods are sufficiently
large to give the desired minimum degree but sufficiently incomplete to rule
out the common-neighborhood pattern required by $H$.  Again, the coding and
degree-raising tasks are separated, but maximality makes the coding problem
substantially more delicate.

The universal positive lower bound (Theorem \ref{thm:strict lowbd for delta_VC}) requires a further idea, because a
general $r$-chromatic graph has no canonical tripartite structure.  After
allowing $r-3$ independent classes of $H$ to lie in reservoirs, we select a
minimal homomorphic core among all possible nonbipartite remainders.  This
core is the finite skeleton that any copy of $H$ must reconstruct in the
coding part.  We build a graph that maps to the core but does not contain it:
one family of adjacencies encodes a high-VC bipartite graph, another encodes
its bipartite complement, and the remaining core edges synchronize the two
choices via matchings.  A copy of the core would then force the same encoded pair to be
both an edge and a nonedge.

The obstruction must also survive arbitrary completion.  If added edges
destroy all bi-induced copies of the high-VC graph, then many of them must
accumulate in one pair that was originally empty.  An application of the K\H{o}v\'ari-S\'os-Tur\'an theorem finds a large complete patch there, and the complementary encoding
propagates this patch into a large blowup of the selected core, hence into the
corresponding remainder of $H$.  Thus a maximal $H$-free completion cannot
erase the encoding (i.e., cannot lower the VC-dimension). Finally, an orientation of a sparse spanning subgraph of
the core prescribes asymmetric reservoir attachments.  It gives every vertex
a full reservoir of neighbours, while ensuring that any attempted copy of the
core using reservoir vertices misses one required adjacency.  The core is
therefore forced back into the coding gadget, where it has already been
excluded.  Minimal cores, complementary encoding, and oriented reservoirs
are the three ingredients that allow high VC-dimension, high minimum degree,
and $H$-freeness to coexist for an arbitrary nonbipartite $H$.

For $s\ge2$, the upper bound on $\delta_{\VCdim}(K_{1,s,t})$ starts from an almost-bipartite
partition with sparse internal graphs and very dense cross connections. The
cross degree is high enough that any internal $K_{1,s}$ would extend through
common neighbours on the other side to a copy of $K_{1,s,t}$; hence both
internal graphs are $K_{1,s}$-free.  When $t\ge2s-2$, every bipartition of
$K_{1,s,t}$ contains a $K_{1,s}$ entirely on one side.  Completing all cross
edges therefore cannot create the forbidden graph, so maximality forces the
cross pair to be complete already.  The graph is then the join of two
bounded-degree graphs and has bounded VC-dimension.  For $t\le2s-3$, this
bipartition property fails, and the lower-bound constructions
exploit exactly that failure.  The phase transition in
\Cref{thm:complete results on tripartite graphs on VC} is therefore explained
by a genuine structural change in the forbidden graph.  The remaining case
$s=1$ is reduced separately to the known structure of maximal triangle-free
graphs.

\paragraph{Chromatic thresholds under bounded VC-dimension.}
A partition lemma (derived from Haussler's packing lemma) divides a bounded-VC graph into finitely many classes
whose vertices have almost identical neighborhoods.  Hence every small set
inside one class has a large common neighborhood.  If a class contained a
forest $F\in\mathcal M(H)$, one could repeatedly pass to such common
neighborhoods and extend $F$ to a copy of $H$.  Every class is therefore
$F$-free and hence has bounded chromatic number, thus bounding the chromatic number of the whole graph.
The lower-bound examples are joins of a high-girth, high-chromatic-number graph with a
complete multipartite graph; their VC-dimension remains bounded because the
high-girth part is $C_4$-free.  This explains why the intermediate
chromatic-threshold value disappears under bounded VC-dimension: after
reducing to finitely many neighborhood types, the only remaining
obstruction is whether the decomposition family contains a forest.

\medskip

Taken together, the proofs isolate three kinds of information invisible to
ordinary colorability: repeated layers survive homomorphic compression,
saturated nonedges survive maximal completion, and minimal cores control how
dense reservoirs can be attached.  These mechanisms are what produce the
infinite spectra, accumulation points, and non-monotonicity established in
this paper.

\paragraph{Organization.}
\Cref{sec:prelim} collects the preliminary tools.  The homomorphism results
are proved in \Cref{sec:K1st hom,sec:hom-accumulation}.  The complete
tripartite VC-dimension thresholds and the general VC lower bound are proved
in \Cref{sec:Krst VC,sec:VC lower bound}, respectively.  The classification
of $\delta_{\chi}^{\mathrm{VC}}$ is proved in \Cref{sec:chromatic VC}, and the
paper concludes with some open problems in \Cref{sec:remark}.

\section{Preliminaries}\label{sec:prelim}
Throughout the paper we frequently use several auxiliary results, which we collect here for convenience.

\subsection{VC-dimension}
We first recall the relevant VC-dimension terminology. Let \(X\) be a finite set and let \(\mathcal F\subseteq 2^X\) be a set system.
A subset \(S\subseteq X\) is \emph{shattered} by \(\mathcal F\) if
$\{F\cap S:F\in\mathcal F\}=2^S$.
The \emph{VC-dimension} of \(\mathcal F\), denoted
$\VCdim(\mathcal F)$, is the maximum size of a subset of \(X\) shattered
by \(\mathcal F\).

For a graph \(G\), we will use the VC-dimension of its neighborhood set system
\[
    \mathcal N(G):=\{N_G(v):v\in V(G)\}\subseteq 2^{V(G)}.
\]
Thus $\VCdim(G):=\VCdim(\mathcal N(G))$.

For a bipartite graph $F$ with parts $A,B$, we slightly change the above definition of VC-dimension: We consider the set system $\{N_A(b) : b\in B\}$ and define the VC-dimension of $F$ to be the VC-dimension of this set system.\footnote{This is similar to the definition of VC-dimension for general graphs, but with two differences: first, we only consider shattered sets contained in $A$, and second, only vertices in $B$ are allowed to shatter a subset of $A$, whereas if we treated $F$ as a general graph then the empty set (as a subset of $A$) could also be accounted for by a vertex of $A$.}
Note that $F$ has VC-dimension at least $d$ if and only if there is $S \subseteq A$, such that for every $T \subseteq S$, there is a vertex $b \in B$ with $N_A(b) \cap S = T$.

For a bipartite graph $F$ with parts $A,B$, a {\em bi-induced copy} of $F$ in a graph $G$ is an injection $\varphi : V(F) \rightarrow V(G)$ such that for all $a \in A, b \in B$ it holds that $ab \in E(F)$ if and only if $\varphi(a)\varphi(b) \in E(G)$. Note that if $G$ has a bi-induced copy of $F$ then $\VCdim(G) \geq \VCdim(F)$.

We will need the following partitioning tool, which follows from Haussler's packing lemma for set systems of bounded
VC-dimension~\cite{1995PackingLemma}; see also
\cite{luczak_coloring_2010} and the formulation in
\cite{liu_beyond_2024}.

\begin{lemma}[Partition lemma {\cite{liu_beyond_2024}}]
    \label{lemma:Partition}
    Let $d$ be a positive integer and let $G$ be an $n$-vertex graph with
    VC-dimension at most $d$. For every $1\le a\le n$, there is a
    partition $V(G)=V_1\sqcup\cdots\sqcup V_m$ with
    $m\le e(d+1)(2e)^d\left(\frac{n}{a}\right)^d$
    such that $|N_G(u)\triangle N_G(v)|\le 2a$ for every $i\in[m]$ and
    every $u,v\in V_i$.
\end{lemma}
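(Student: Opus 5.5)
This is the Partition lemma, which follows from Haussler's packing lemma. I would obtain it by taking a maximal packing and assigning each vertex to the nearest center.

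\emph{Packing.} Consider the set system $\mathcal N(G)$ on ground set $V(G)$, viewed with the Hamming (symmetric-difference) metric. Choose a maximal set $v_1,\dots,v_m$ of vertices whose neighborhoods are pairwise $a$-separated, meaning $|N_G(v_i)\triangle N_G(v_j)|>a$ for all $i\neq j$. Distinct vertices may have identical neighborhoods; such vertices are simply never both chosen. The neighborhoods $N_G(v_1),\dots,N_G(v_m)$ are distinct members of a set system of VC-dimension at most $d$ that is $(a+1)$-separated. Haussler's packing lemma states that such a family has size at most $e(d+1)\left(\frac{2en}{a+1}\right)^d$. Since $a+1>a$, this is at most $e(d+1)(2e)^d(n/a)^d$, which is the claimed bound on $m$. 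The only care needed is matching the precise form of Haussler's statement, which is usually written with separation $\delta$ and bound $e(d+1)(2en/\delta)^d$. If the version used is ``pairwise distance at least $a$'', I would select centers with distance $>a$ and apply it with $\delta=a$, which gives the same bound.

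\emph{Assignment.} By maximality, every vertex $u$ satisfies $|N_G(u)\triangle N_G(v_i)|\le a$ for some $i$. Assign $u$ to the least such $i$ and let $V_i$ be the set of vertices assigned to $i$. Each center lies in its own class, since its distance to itself is $0$ and its distance to every other center exceeds $a$. Hence the sets $V_1,\dots,V_m$ are nonempty and form a partition of $V(G)$.

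\emph{Diameter.} For $u,w\in V_i$, the triangle inequality for symmetric difference gives
\[
|N_G(u)\triangle N_G(w)|\le|N_G(u)\triangle N_G(v_i)|+|N_G(v_i)\triangle N_G(w)|\le 2a .
\]
This is exactly the required property, so the lemma follows. No step is a real obstacle; the only point requiring attention is the constant bookkeeping in Haussler's bound.
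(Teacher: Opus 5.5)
Your proof is correct, and it is the standard derivation the paper has in mind when it says the lemma follows from Haussler's packing lemma (the paper itself only cites the lemma): a maximal set of centers whose neighborhoods are pairwise more than $a$ apart, Haussler's bound applied to those neighborhoods, nearest-center assignment, and the triangle inequality for $\triangle$. One small nit: $a$ need not be an integer (it is later taken to be $\varepsilon n/(10h)$), so ``$(a+1)$-separated'' should read ``$(\lfloor a\rfloor+1)$-separated''; since $\lfloor a\rfloor+1>a$, your bound on $m$ is unaffected.
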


\noindent
We will also need the following
three elementary observations concerning VC-dimension.

\begin{fact}\label{fact:VC-implies-C4}
    If $G$ is $C_4$-free, then
    $\VCdim(G)\le 2$.
\end{fact}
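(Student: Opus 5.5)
The plan is to show that no set of three vertices can be shattered by $\mathcal N(G)$. Suppose for contradiction that $S=\{x,y,z\}\subseteq V(G)$ is shattered. Then for every $T\subseteq S$ there is a vertex $v_T$ with $N_G(v_T)\cap S=T$. We only need one choice of $T$, namely $T=S$, and the argument rests on the standard observation that in a $C_4$-free graph two distinct vertices have at most one common neighbour.

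The key step uses a vertex $v_S$ adjacent to all of $x,y,z$, together with the vertices $v_{\{x,y\}}$ adjacent to both $x$ and $y$. First, $v_{\{x,y\}}$ is not adjacent to $z$. Hence $v_{\{x,y\}}\neq v_S$, since their traces on $S$ differ.

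Second, $v_S$ and $v_{\{x,y\}}$ are distinct vertices, and both lie outside $\{x,y\}$ because no vertex is its own neighbour. Each of them is adjacent to both $x$ and $y$.

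Therefore $x\,v_S\,y\,v_{\{x,y\}}\,x$ is a $4$-cycle in $G$, contradicting $C_4$-freeness. This shows that no $3$-set is shattered, so $\VCdim(G)\le 2$.

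The only point requiring care is the one handled in the key step: the two common neighbours must be distinct from each other and from $x$ and $y$, so that the closed walk is a genuine $C_4$. Distinctness from each other follows from their different traces on $S$, and distinctness from $x,y$ follows from the absence of loops. No other obstacle arises.
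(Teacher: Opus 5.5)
Your proof is correct and follows essentially the same route as the paper's: both take the vertices realizing the traces $\{x,y\}$ and $\{x,y,z\}$ on a shattered triple, and observe that together with $x$ and $y$ they form a $4$-cycle. One small wording slip: you say only $T=S$ is needed, but the argument, like the paper's, uses both $T=S$ and $T=\{x,y\}$.
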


\begin{proof}
    Assume that $\VCdim(G)\ge 3$, and let $S=\{x,y,z\}$ be a
    shattered set. There exist vertices $u,v$ such that
    $N_G(u)\cap S=\{x,y\}$ and $N_G(v)\cap S=\{x,y,z\}$.
    The four vertices $x,u,y,v$ are distinct, and both $u$ and $v$ are
    adjacent to $x$ and to $y$. Hence $xuyvx$ is a copy of $C_4$, a contradiction.
\end{proof}

\begin{fact}\label{fact:VC complete multipartite}
    If $G$ is complete multipartite then $\VCdim(G)\le 2$.
\end{fact}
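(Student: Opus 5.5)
The plan is to imitate the proof of \Cref{fact:VC-implies-C4}: assume a large set is shattered and derive a contradiction from the rigid neighborhood structure of a complete multipartite graph. Write $G$ with parts $P_1,\ldots,P_k$. The key observation is that for $v\in P_i$ we have $N_G(v)=V(G)\setminus P_i$. Hence the neighborhood of a vertex depends only on its part, and $\mathcal N(G)$ consists of the sets $V(G)\setminus P_i$. (If $G$ has isolated vertices, view each as its own part; only nonempty parts contribute neighborhoods.)

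Now suppose $S=\{x,y,z\}$ is shattered, and trace two of the required traces.
\begin{itemize}
\item[(1)] Some vertex $u$ has $N_G(u)\cap S=\emptyset$. If $u\in P_i$, then $S\subseteq P_i$, so $x,y,z$ all lie in one part.
\item[(2)] Some vertex $v$ has $N_G(v)\cap S=\{x\}$. If $v\in P_j$, then $x\notin P_j$ while $y,z\in P_j$.
\end{itemize}
Since the parts are disjoint, (1) and (2) together force $i=j$, and then $x\in P_i=P_j$ contradicts $x\notin P_j$. Hence no $3$-set is shattered, and $\VCdim(G)\le 2$.

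A slightly cleaner alternative bounds the traces directly. Every trace $N_G(v)\cap S$ has the form $S\setminus P_i$. The sets $S\cap P_i$ are disjoint, so a trace missing two or more points of $S$ must miss exactly one block $S\cap P_i$. Thus the traces $\emptyset$ and $\{x\}$ cannot both occur when $|S|=3$.

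There is no real obstacle in this argument. The only care needed is the degenerate convention for isolated vertices and empty parts, and these do not affect the argument.
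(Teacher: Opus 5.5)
Your proof is correct. It differs from the paper's only in which traces you test. The paper never uses the empty trace. It argues that two points of $S$ in the same part cannot be separated by any trace. It then argues that three points in distinct parts admit no trace $\{x\}$, since a vertex realizing it would have to lie in the parts of both $y$ and $z$; this gives $|S|\le 2$.

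You instead use the empty trace, which immediately forces $S$ into a single part $P_i$, after which the trace $\{x\}$ is impossible. This route avoids the paper's case split. It also proves more: once $S\subseteq P_i$, every realizable trace is $\emptyset$ or $S$. So the same two lines already rule out shattered $2$-sets and give $\VCdim(G)\le 1$. The paper's bound of $2$ is exactly what its singleton-trace argument can reach, and it suffices for the application to $J\vee T_t(N)$.

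One small inaccuracy concerns your remark about treating isolated vertices as their own parts. A complete multipartite graph has isolated vertices only when it is edgeless with a single part. Splitting those vertices into singleton parts would give the false formula $N_G(v)=V(G)\setminus\{v\}$. The remark is unnecessary, since $N_G(v)=V(G)\setminus P_i$ already covers that case.
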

\begin{proof}
    Let $S \subseteq V(G)$ be a shattered set. It is easy to see that $S$ cannot contain two elements from the same partite class of $G$. Also, if $S$ has three elements $x,y,z$ from distinct partite classes, then there is no vertex which is adjacent to $x$ but not to $y,z$. This shows that $|S| \leq 2$.
\end{proof}

Recall that the \emph{join} $G_1\vee G_2$ is obtained from the disjoint
union of $G_1$ and $G_2$ by adding all edges between their vertex sets.

\begin{fact}\label{fact:VC-of-join}
    For any two graphs $G_1$ and $G_2$,
    $\VCdim(G_1\vee G_2)
    \le
    \max\{\VCdim(G_1),\VCdim(G_2)\}+1$.
\end{fact}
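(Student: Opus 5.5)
The plan is to take an arbitrary set $S\subseteq V(G_1\vee G_2)$ that is shattered by $\mathcal N(G_1\vee G_2)$. We first show that $S$ lies entirely inside one side of the join. We then remove one element of $S$ and show that the rest is shattered by the neighborhood system of that side alone. We may assume $S\neq\emptyset$, since otherwise there is nothing to prove.

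\textbf{Step 1: $S$ lies on one side.} Write $S_1=S\cap V(G_1)$ and $S_2=S\cap V(G_2)$, and suppose for contradiction that both are nonempty. Since $S$ is shattered, some vertex $w$ satisfies $N_{G_1\vee G_2}(w)\cap S=\emptyset$.
\begin{itemize}
\item If $w\in V(G_1)$, then $w$ is adjacent to every vertex of $V(G_2)$, hence to every vertex of $S_2\neq\emptyset$.
\item If $w\in V(G_2)$, then $w$ is adjacent to every vertex of $S_1\neq\emptyset$.
\end{itemize}
Either way $w$ has a neighbor in $S$, a contradiction. So, by symmetry, we may assume $S\subseteq V(G_1)$.

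\textbf{Step 2: drop one element.} Every vertex of $G_2$ is adjacent to all of $V(G_1)$, so its trace on $S$ is exactly $S$. Hence every proper subset $T\subsetneq S$ is realized as $N_{G_1\vee G_2}(v)\cap S=T$ by some $v\in V(G_1)$. For such $v$ we have $N_{G_1\vee G_2}(v)\cap V(G_1)=N_{G_1}(v)$, so in fact $N_{G_1}(v)\cap S=T$.

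Fix $x\in S$ and put $S'=S\setminus\{x\}$. Every $T\subseteq S'$ is a proper subset of $S$, so there is $v\in V(G_1)$ with $N_{G_1}(v)\cap S=T$, and therefore $N_{G_1}(v)\cap S'=T$. Thus $S'$ is shattered by $\mathcal N(G_1)$, which gives $|S'|\le\VCdim(G_1)$.

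\textbf{Conclusion.} We get $|S|\le\VCdim(G_1)+1$. Applying the symmetric argument when $S\subseteq V(G_2)$ yields $\VCdim(G_1\vee G_2)\le\max\{\VCdim(G_1),\VCdim(G_2)\}+1$.

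\textbf{Main obstacle.} The only delicate point is Step 1: the empty trace cannot be realized when $S$ meets both sides. Once that is established, the single ``full'' trace $S$ contributed by $G_2$ accounts for the $+1$, and the remaining traces come from $G_1$.
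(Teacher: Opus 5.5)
Your proposal is correct and follows essentially the same argument as the paper. You show that a shattered set cannot meet both sides because the empty trace would be unrealizable, and after deleting one element, the remaining traces must come from the side containing $S$, since every vertex of the other side sees all of $S$.
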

\begin{proof}
    Let $S$ be shattered in $G_1\vee G_2$. If $S$ meets both
    $V(G_1)$ and $V(G_2)$, then every vertex has a neighbour in $S$, so
    the empty trace on $S$ cannot be realised. Therefore $S$ is contained
    in one side, say $S\subseteq V(G_1)$.

    Fix $x\in S$. Every subset of $S\setminus\{x\}$ is a proper subset
    of $S$, and hence cannot be realised by a vertex of $G_2$, since each
    vertex of $G_2$ is adjacent to all of $S$. Thus
    $S\setminus\{x\}$ is shattered by the neighborhood system of
    $G_1$. Consequently
    $|S|-1\le \VCdim(G_1)$, which proves the
    claim.
\end{proof}

\subsection{Classical results}

We begin with the classical bound on the Turán number of $r$-partite $r$-uniform hypergraphs. 
    Let $r\ge 2$ be an integer and let $F$ be an $r$-uniform hypergraph.
The \emph{Turán number} $\ex_r(n,F)$ of $F$ is the maximum number of hyperedges in an $n$-vertex $r$-uniform
hypergraph that does not contain $F$ as a subhypergraph.

\begin{theorem}[Multipartite Turán number~\cite{erdos1964extremal}]\label{thm:erdos-rpartite-turan}
Fix an integer $r\ge 2$ and let $F$ be an $r$-uniform hypergraph that is \emph{$r$-partite}, i.e.\ $V(F)$ admits a partition $V(F)=V_1\cup\cdots\cup V_r$ such that every edge of $F$ meets each $V_i$ in exactly one vertex.
Then there exists a constant $\delta=\delta(F)>0$ such that
$\ex_r(n,F)\le n^{\,r-\delta},$
and in particular $\ex_r(n,F)=o(n^r)$ as $n\to\infty$.
\end{theorem}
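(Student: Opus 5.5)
The plan is the classical Erdős argument: reduce to complete $r$-partite hypergraphs, then induct on $r$, passing from $r$ to $r-1$ by convexity (Jensen) on link degrees. Throughout I take $\delta = t^{-(r-1)}$.

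\emph{Reduction.} Set $t:=v(F)$ and let $K^{(r)}_r(t)$ be the complete $r$-partite $r$-uniform hypergraph with $r$ parts of size $t$. Given the partition $V(F)=V_1\cup\cdots\cup V_r$, each $|V_i|\le t$, so embedding $V_i$ into the $i$-th part shows $F\subseteq K^{(r)}_r(t)$. Hence $\ex_r(n,F)\le \ex_r(n,K^{(r)}_r(t))$. I will prove by induction on $r$ that every $n$-vertex $r$-graph with at least $C_r n^{\,r-t^{-(r-1)}}$ edges contains $K^{(r)}_r(t)$, where $C_r$ depends only on $r$ and $t$. Since the statement only asks for some $\delta>0$, the constant is absorbed by taking, say, $\delta=\frac12 t^{-(r-1)}$ and $n$ large. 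For small $n$, the trivial bound $\binom nr$ is smaller than $n^{r-\delta}$ up to constants, and these constants can be absorbed as well.

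\emph{Base case $r=2$.} This is Kővári–Sós–Turán. Count pairs $(v,T)$ with $T$ a $t$-subset of $N(v)$. By convexity this number is at least $n\binom{2m/n}{t}$. If there is no $K_{t,t}$, it is at most $(t-1)\binom nt$. Comparing the two gives $m=O(n^{2-1/t})$.

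\emph{Inductive step $r\ge3$.} Let $\mathcal H$ have $m$ edges. For an $(r-1)$-set $S$, let $d(S)$ be the number of vertices $v$ with $S\cup\{v\}\in\mathcal H$. For a $t$-set $T$, let $\mathcal H_T$ be the $(r-1)$-graph of those $S$ with $S\cup\{v\}\in\mathcal H$ for every $v\in T$. Such $S$ is automatically disjoint from $T$.

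Double counting and Jensen give
\[
\sum_T |\mathcal H_T| \;=\; \sum_S \binom{d(S)}{t} \;\ge\; \binom{n}{r-1}\binom{rm/\binom{n}{r-1}}{t}.
\]
So some $T$ has
\[
|\mathcal H_T|\ \ge\ c\, n^{r-1-t}\,(m/n^{r-1})^t .
\]
With $m=C_r n^{r-t^{-(r-1)}}$, this is at least
\[
c\,C_r^t\, n^{(r-1)-t\cdot t^{-(r-1)}} \;=\; c\,C_r^t\, n^{(r-1)-t^{-(r-2)}},
\]
which exceeds the inductive threshold once $C_r$ is chosen large. By induction, $\mathcal H_T$, restricted to $V\setminus T$, contains a copy of $K^{(r-1)}_{r-1}(t)$ with parts $W_1,\dots,W_{r-1}$. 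Every transversal of the $W_i$ together with any $v\in T$ is an edge of $\mathcal H$. Hence $W_1,\dots,W_{r-1},T$ span $K^{(r)}_r(t)$.

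\emph{Main obstacle.} The only delicate points are the exponent bookkeeping and the convexity step. The identity $t\cdot t^{-(r-1)}=t^{-(r-2)}$ makes the exponents match exactly. The convexity step needs the average link degree $rm/\binom{n}{r-1}$ to be at least $t$, so that $x\mapsto\binom{x}{t}$, taken as $0$ below $t-1$, is convex on the relevant range. This holds for large $n$ because $m\gg n^{r-1}$. Everything else is routine constant-chasing.
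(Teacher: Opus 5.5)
Your argument is correct and is exactly Erd\H{o}s's original proof from the cited 1964 paper; the present paper only quotes the theorem and gives no proof. You reduce to $K^{(r)}_r(t)$, induct on $r$ with K\H{o}v\'ari--S\'os--Tur\'an as the base case, and pass to the common link $\mathcal H_T$ of a $t$-set via Jensen, obtaining $\delta=t^{1-r}$ for all large $n$.

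The one loose remark is the last: if the bound is wanted for every $n$ rather than for large $n$, the constant $C_r$ must be absorbed by shrinking $\delta$ further in terms of $C_r$, not by keeping $\delta=\frac12 t^{1-r}$ fixed. This is harmless, since any $\delta>0$ is allowed and the paper only uses the $o(n^r)$ consequence.
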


\noindent
As a direct corollary we obtain the following graph-theoretic consequence.

\begin{fact}\label{fact: Ksssss-free graph has small number}
    Fix integers $r \geq 2, s \geq 1$ and let $H\subseteq K_{s,s,\ldots,s}$ be an $r$-partite graph, where $K_{s,s,\ldots,s}$ denotes the complete $r$-partite graph with $r$ parts each of size $s$.
    Then any $H$-free graph $G$ on $n$ vertices contains at most $o(n^r)$ copies of $K_r$.
\end{fact}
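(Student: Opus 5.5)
We derive this directly from \Cref{thm:erdos-rpartite-turan}, applied to the $r$-uniform hypergraph of $r$-cliques of $G$. Since $H$-freeness is inherited by supergraphs of $H$ in the forbidden sense, we may first replace $H$ by $K_{s,\ldots,s}$: any graph containing $K_{s,\ldots,s}$ contains $H$. Hence an $H$-free graph is also $K_{s,\ldots,s}$-free, and it suffices to treat $H=K_{s,\ldots,s}$ (the complete $r$-partite graph with parts of size $s$).

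Given an $n$-vertex $K_{s,\ldots,s}$-free graph $G$, let $\mathcal K$ be the $r$-uniform hypergraph on $V(G)$ whose hyperedges are the vertex sets of copies of $K_r$ in $G$. Let $F$ be the complete $r$-partite $r$-uniform hypergraph $K^{(r)}_{s,\ldots,s}$, with parts $V_1,\ldots,V_r$ of size $s$ and all $s^r$ transversal hyperedges. It is $r$-partite in the sense of \Cref{thm:erdos-rpartite-turan}.

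The key step is to show that $\mathcal K$ is $F$-free. Suppose $\mathcal K$ contains a copy of $F$, with parts $U_1,\ldots,U_r\subseteq V(G)$, each of size $s$ and pairwise disjoint. Take $u\in U_i$ and $w\in U_j$ with $i\ne j$. Since $r\ge 2$, some transversal $\{x_1,\ldots,x_r\}$ with $x_i=u$ and $x_j=w$ is a hyperedge of $\mathcal K$, that is, an $r$-clique in $G$. Hence $uw\in E(G)$. Therefore $G[U_1\cup\cdots\cup U_r]$ contains all edges between distinct $U_i$, giving a copy of $K_{s,\ldots,s}$ in $G$, a contradiction.

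By \Cref{thm:erdos-rpartite-turan}, the number of hyperedges of $\mathcal K$ is at most $\ex_r(n,F)\le n^{r-\delta}=o(n^r)$. Each copy of $K_r$ in $G$ corresponds to exactly one hyperedge, namely its vertex set, since a $K_r$ subgraph on $r$ vertices is determined by those vertices. So $G$ has $o(n^r)$ copies of $K_r$. (If copies are counted as labelled embeddings, this only introduces a factor $r!$, which does not affect the bound.)

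The only point needing care is that the parts $U_i$ of the embedded $F$ are disjoint, which holds because a copy of $F$ is an injective image. There is no real obstacle here.
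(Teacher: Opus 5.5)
Your proposal is correct and follows essentially the same route as the paper: you encode the copies of $K_r$ in $G$ as an $r$-uniform hypergraph, observe that a copy of $K^{(r)}_{s,\ldots,s}$ in it forces a copy of $K_{s,\ldots,s}\supseteq H$ in $G$, and conclude via Erd\H{o}s's bound. The only difference is cosmetic: you show directly that the clique hypergraph is $K^{(r)}_{s,\ldots,s}$-free, whereas the paper phrases the same step as a contradiction from assuming $\Omega(n^r)$ copies of $K_r$.
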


\begin{proof}
Let $\mathcal{K}_r(G)$ be the $r$-uniform hypergraph on vertex set $V(G)$ whose hyperedges are the vertex-sets of copies of $K_r$ in $G$.
Thus $e(\mathcal{K}_r(G))$ equals the number of copies of $K_r$ in $G$.

Suppose for contradiction that $G$ contains $\Omega(n^r)$ copies of $K_r$. Equivalently,
$e(\mathcal{K}_r(G))=\Omega(n^r).$
Since $K^{(r)}_{s,\ldots,s}$ is $r$-partite, \Cref{thm:erdos-rpartite-turan} implies that for $n$ sufficiently large,
any $r$-uniform hypergraph on $n$ vertices with $\Omega(n^r)$ edges must contain $K^{(r)}_{s,\ldots,s}$ as a subhypergraph.
Hence $\mathcal{K}_r(G)$ contains $K^{(r)}_{s,\ldots,s}$, meaning that there exist pairwise disjoint vertex sets
$V_1,\ldots,V_r\subseteq V(G)$ with $|V_i|=s$ such that every transversal $r$-tuple
$\{v_1,\ldots,v_r\}$ with $v_i\in V_i$ forms a copy of $K_r$ in $G$.

But then in $G$ every pair of vertices from different parts $V_i,V_j$ is adjacent
(otherwise some transversal $r$-tuple would fail to span a $K_r$). Therefore $G[V_1\cup\cdots\cup V_r]$ contains the complete
$r$-partite graph $K_{s,s,\ldots,s}$, and thus also contains $H\subseteq K_{s,s,\ldots,s}$ as a subgraph.
This contradicts the assumption that $G$ is $H$-free. Hence $G$ contains only $o(n^r)$ copies of $K_r$.
\end{proof}

\noindent
The following is the well-known graph removal lemma.

\begin{theorem}[Graph removal lemma {\cite{alonShapira2004,ruzsaSzemeredi1978}}]\label{thm:graph-removal}
For every graph $H$ and every $\varepsilon>0$ there exists $\delta=\delta(H,\varepsilon)>0$ such that the following holds for all sufficiently large $n$.
If an $n$-vertex graph $G$ contains at most $\delta n^{v(H)}$ (labelled) copies of $H$, then one can delete at most $\varepsilon n^2$ edges from $G$ to obtain an $H$-free graph.
\end{theorem}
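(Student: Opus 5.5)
The graph removal lemma as stated is a classical result, cited from Ruzsa--Szemer\'edi and Alon--Shapira, and I would not reprove it in the paper. If a self-contained argument were wanted, I would take the standard route through Szemer\'edi's regularity lemma. Fix $H$ with $h=v(H)$ and $\varepsilon>0$. First apply the regularity lemma to $G$ with parameter $\eta=\eta(\varepsilon,H)$, which is much smaller than $\varepsilon$. This gives an equipartition $V_1\sqcup\cdots\sqcup V_k$ with $k\le M(\eta)$ in which all but $\eta k^2$ pairs are $\eta$-regular.

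Next, clean the graph by deleting three kinds of edges:
\begin{itemize}
\item edges inside the parts $V_i$, at most $n^2/k$ of them;
\item edges in irregular pairs, at most $\eta n^2$;
\item edges in pairs of density below $\varepsilon/4$, at most $\varepsilon n^2/4$.
\end{itemize}
With the lower bound on $k$ from the regularity lemma chosen suitably, the total deleted is at most $\varepsilon n^2$.

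Suppose the cleaned graph $G'$ still contained a copy of $H$. Its vertices would lie in parts $V_{i_1},\dots,V_{i_h}$, with the indices not necessarily distinct, and every edge of the copy would lie in a regular pair of density at least $\varepsilon/4$. The counting lemma, applied after splitting each part into $h$ equal subparts so that repeated indices become distinct sets, then yields at least $c(\varepsilon,h)\,(n/(hk))^h$ labelled copies of $H$ in $G$. Here $c$ is roughly $(\varepsilon/8)^{e(H)}$. Setting $\delta=\tfrac12 c\,(hM(\eta))^{-h}$, this contradicts the hypothesis that $G$ has at most $\delta n^h$ copies. Hence $G'$ is $H$-free.

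The main technical point is the counting lemma, together with the splitting needed when several vertices of $H$ land in the same part, since regularity is inherited by subsets of linear size only with a weakened parameter. I would handle this by choosing $\eta\ll(\varepsilon/h)^{e(H)}$ and embedding $H$ greedily vertex by vertex, keeping candidate sets of size at least $(\varepsilon/8)^{\deg}\,|V_i|/h$ at every step.
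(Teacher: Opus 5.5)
Your proposal is correct and matches the paper, which does not prove the removal lemma either and simply cites it from Ruzsa--Szemer\'edi and Alon--Shapira. Your sketch is the standard argument behind those references: apply the regularity lemma, delete edges inside parts, in irregular pairs and in sparse pairs, then run a greedy counting lemma that either avoids used vertices or splits parts to handle repeated indices. The parameter choices you indicate ($k\ge 4/\varepsilon$, $\eta\ll(\varepsilon/h)^{e(H)}$, $\delta=\tfrac12 c\,(hM(\eta))^{-h}$) are sufficient.
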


\noindent
Finally, the following is the classical Andrásfai--Erdős--Sós theorem.

\begin{theorem}[Andrásfai--Erdős--Sós {\cite{andrasfaiErdosSos1974}}]\label{thm:AES-Krfree}
Let $r\ge 3$ and let $G$ be an $n$-vertex $K_r$-free graph. If
$\delta(G)>\frac{3r-7}{3r-4}n,$
then $G$ is $(r-1)$-partite.
\end{theorem}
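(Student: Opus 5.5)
The statement to prove is the Andrásfai--Erdős--Sós theorem (\Cref{thm:AES-Krfree}): a $K_r$-free graph $G$ on $n$ vertices with $\delta(G)>\frac{3r-7}{3r-4}n$ is $(r-1)$-partite. I would prove it by induction on $r$, starting from the triangle-free case $r=3$. There the condition reads $\delta(G)>\frac{2}{5}n$, and the claim is that $G$ is bipartite.

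\textbf{Base case $r=3$.} Suppose $G$ is not bipartite, and let $C$ be a shortest odd cycle, of length $2k+1$. Since $G$ is triangle-free, $2k+1\ge 5$.
\begin{itemize}
\item Because $C$ is shortest, every vertex outside $C$ has at most two neighbours on $C$; otherwise it would create a shorter odd cycle or a triangle.
\item Every vertex of $C$ has exactly two neighbours on $C$, since $C$ is chordless by minimality.
\item Double counting the edges between $C$ and $V(G)$ gives
\[(2k+1)\delta(G)\le \sum_{v\in C}\deg(v)\le 2n.\]
\end{itemize}
Hence $\delta(G)\le \frac{2n}{2k+1}\le \frac{2n}{5}$, which contradicts $\delta(G)>\frac{2}{5}n$. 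So $G$ is bipartite.

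\textbf{Inductive step, $r\ge4$.} Let $G$ be $K_r$-free with $\delta(G)>\frac{3r-7}{3r-4}n$, and suppose $G$ is not $(r-1)$-partite. The plan is to pass to a neighbourhood and apply the statement for $r-1$.
\begin{itemize}
\item The natural first try is to pick a vertex $v$ and set $G'=G[N(v)]$. Then $G'$ is $K_{r-1}$-free, and each $u\in N(v)$ has at least $\deg(v)-(n-\deg(u))$ neighbours inside $N(v)$.
\item To bound the relative minimum degree, choose $v$ of minimum degree, so $\deg(v)=\delta$. Then
\[\frac{\delta(G')}{|N(v)|}\ge \frac{2\delta-n}{\delta}=2-\frac{n}{\delta}.\]
The worst case is $\delta$ at the threshold $\frac{3r-7}{3r-4}n$. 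There this ratio equals
\[2-\frac{3r-4}{3r-7}=\frac{3r-10}{3r-7},\]
which is exactly the threshold for $r-1$. Since the inequality on $\delta$ is strict, the bound is strict, so the inductive hypothesis shows that every such neighbourhood is $(r-2)$-partite.
\item Knowing that neighbourhoods are $(r-2)$-partite does not by itself make $G$ $(r-1)$-partite. The standard route is to argue instead by \emph{maximality}: add edges to $G$ while keeping it $K_r$-free and non-$(r-1)$-partite. This keeps the degree condition, since degrees only increase, and the non-$(r-1)$-partite property can be preserved by stopping at a maximal graph among the $K_r$-free, non-$(r-1)$-partite supergraphs. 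In such a saturated graph one shows that $G$ contains a $K_{r-3}$ joined to a $5$-cycle; this is the classical structural lemma for maximal $K_r$-free graphs that are not $(r-1)$-partite.
\item Given that configuration $W=K_{r-3}\vee C_5$, on $r+2$ vertices, count the degrees into it. A vertex adjacent to all $r-3$ clique vertices and to three vertices of the $C_5$ would create a $K_r$, because any three vertices of $C_5$ contain an edge. So each vertex has at most $(r-3)+2$ neighbours in $W$, or at most $(r-4)+3$ if it misses a clique vertex. Either way each vertex sends at most $r-1$ edges into $W$.
\item Summing degrees over $W$ gives $(r+2)\delta\le (r-1)n$, i.e. $\delta\le \frac{r-1}{r+2}n$. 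This does not match $\frac{3r-7}{3r-4}$, so the count must be weighted. Give the clique vertices weight $1$ and the cycle vertices weight $1/2$, for total weight $(r-3)+5/2$. Each outside vertex then contributes weighted degree at most $(r-3)+1$ in the non-clique-missing case, and at most $(r-4)+3/2$ otherwise. This yields
\[\Bigl(r-\tfrac12\Bigr)\delta\le (r-2)n,\]
that is, $\delta\le \frac{2r-4}{2r-1}n$, which still does not match the target.
\end{itemize}

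\textbf{Main obstacle.} The two steps I have not verified are the main difficulty. First, the weighted count as set up yields $\frac{2r-4}{2r-1}$ rather than $\frac{3r-7}{3r-4}$, so the weights on the clique and cycle vertices need to be re-tuned to land exactly on the threshold. Second, the structural lemma producing $K_{r-3}\vee C_5$ inside a maximal $K_r$-free, non-$(r-1)$-partite graph needs a proof or a precise reference. If that lemma proves awkward, I would fall back on the direct route: induct by taking a vertex $v$ with $G[N(v)]$ $(r-2)$-partite and showing that the non-neighbours of $v$ form an independent set.
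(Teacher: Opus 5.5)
The paper gives no proof of this statement. It is quoted from Andr\'asfai--Erd\H{o}s--S\'os and used as a black box, so your argument has to stand on its own. Your base case $r=3$ (shortest odd cycle, at most two neighbours on it per vertex, double counting) is correct. So is your computation that every neighbourhood is $(r-2)$-partite.

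The inductive step, however, rests on a ``classical structural lemma'' that is false. For $r\ge4$, let $\overline{C_7}$ be the graph on $\mathbb{Z}_7$ with $i\sim j$ iff $i-j\in\{\pm2,\pm3\}$, and consider $\overline{C_7}\vee K_{r-4}$. It is $K_r$-free since $\omega(\overline{C_7})=3$. It is maximal: every non-edge lies inside $\overline{C_7}$, and adding $\{0,1\}$ creates a $K_4$ on $\{0,1,3,5\}$. It has chromatic number $r$ since $\alpha(\overline{C_7})=2$. Yet every $K_{r-3}$ in it must use a vertex of $\overline{C_7}$, and one checks that its common neighbourhood has at most $4$ vertices. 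So it contains no $K_{r-3}\vee C_5$; for $r=4$ this is simply because $\overline{C_7}$ is $4$-regular.

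Hence the minimum-degree hypothesis has to enter the structural step itself. That step---showing that a maximal $K_r$-free graph with $\delta>\frac{3r-7}{3r-4}n$ is complete $(r-1)$-partite---is exactly the content of the theorem, and the proposal does not supply it. The fallback does not help either. Without maximality there may be no vertex whose non-neighbourhood is independent. With maximality, the existence of such a vertex is equivalent to the conclusion you want.

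Separately, the counting is wrong as written. A vertex adjacent to only $r-4$ of the clique vertices may be adjacent to all five cycle vertices: its neighbourhood in $W$ is then $K_{r-4}\vee C_5$, of clique number $r-2$. So your bounds $(r-4)+3$ and $(r-4)+\tfrac32$ in that case are false. A warning sign is that your outcomes $\frac{r-1}{r+2}$ and $\frac{2r-4}{2r-1}$ are both at most $\frac{3r-7}{3r-4}$, so they would already contradict the hypothesis. That cannot be. The blow-up of $K_{r-3}\vee C_5$ with classes of size $3m$ at the clique vertices and $m$ at the cycle vertices contains $W$, is $K_r$-free and $r$-chromatic, and has $\delta=\frac{3r-7}{3r-4}n$.

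Those class sizes are the correct weights. Weight $3$ on clique vertices and $1$ on cycle vertices gives total weight $3r-4$ and maximum weighted degree $3r-7$, hence $(3r-4)\delta\le(3r-7)n$. More simply, the common neighbourhood of the $K_{r-3}$ is triangle-free with relative minimum degree $>2/5$, hence bipartite by your base case, which rules out the $C_5$. Either way, this only disposes of graphs that already contain $K_{r-3}\vee C_5$. By the example above, nothing forces such a configuration unless $\delta$ is used.
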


\subsection{A useful partition lemma}

The following lemma shows that a dense $K_{1,s_1,\dots,s_{r-1}}$-free graph can be vertex-partitioned into $r-1$ parts such that every vertex has almost all of its neighbors outside of its part.

\begin{lemma}\label{lem:AES-almost-partite}
    Let $r\ge 3$ and let $s_1,\ldots,s_{r-1}\ge 1$ be integers.
    For every $\eps>0$, there exists $n_0=n_0(s_1,\dots,s_{r-1},\eps)$ such that the following holds.  Let $G$ be an
    $n$-vertex $K_{1,s_1,\ldots,s_{r-1}}$-free graph with $n\ge n_0$ and
    $\delta(G)\ge
        \left(\frac{3r-7}{3r-4}+\eps\right)n$.
    Then there is a partition
    $V(G)=V_1\cup\cdots\cup V_{r-1}$
    such that
    $\Delta(G[V_i])\le \varepsilon n$
    for every $i\in[r-1]$.
\end{lemma}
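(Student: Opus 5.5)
The plan is to combine \Cref{fact: Ksssss-free graph has small number}, the removal lemma (\Cref{thm:graph-removal}), and the Andr\'asfai--Erd\H{o}s--S\'os theorem (\Cref{thm:AES-Krfree}), and then clean up the vertices with large internal degree. Write $H=K_{1,s_1,\ldots,s_{r-1}}$ and $s=\max_i s_i$. Then $H\subseteq K_{s,\ldots,s}$ ($r$ parts, the singleton part padded), so \Cref{fact: Ksssss-free graph has small number} shows that $G$ has $o(n^r)$ copies of $K_r$.

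Fix a small $\eta\ll\eps$ (say $\eta=\eps^2/100$). By \Cref{thm:graph-removal} applied to $K_r$, for $n$ large we can delete at most $\eta n^2$ edges to obtain a $K_r$-free graph $G'$. The graph $G'$ may have small minimum degree, so let $B$ be the set of vertices that lost more than $\sqrt\eta\, n$ edges; then $|B|\le 2\sqrt\eta\, n$. In $G''=G'-B$ every vertex has degree at least $(\frac{3r-7}{3r-4}+\eps-3\sqrt\eta)n > \frac{3r-7}{3r-4}|V(G'')|$. Since $G''$ is $K_r$-free, \Cref{thm:AES-Krfree} makes it $(r-1)$-partite, with parts $U_1,\ldots,U_{r-1}$. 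For $v\notin B$, the internal degree of $v$ in $G$ satisfies $\deg_{G[U_i]}(v)\le\sqrt\eta\, n$, because all such edges were deleted.

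It remains to place the vertices of $B$ and confirm the bound for all vertices, including those in $V\setminus B$ that receive new neighbours from $B$. I would assign each $b\in B$ to a part $U_i$ in which it has the fewest neighbours. The hard step is showing that some part contains few neighbours of $b$, since at this point the only information about $b$ is its degree. For this I use $H$-freeness directly. Suppose $b$ has at least $\eps n/2$ neighbours in every part. The graph $G''$ is a dense $(r-1)$-partite graph in which each vertex is adjacent to almost all of every other part; indeed, the min degree $>\frac{3r-7}{3r-4}n$ combined with $(r-1)$-partiteness forces the parts to be nearly balanced and each vertex to miss only a small fraction of each other part. So greedily choosing $s_1$ neighbours of $b$ in $U_1$, then $s_2$ common neighbours in $U_2$, and so on, would embed $K_{1,s_1,\ldots,s_{r-1}}$ with $b$ as the singleton, a contradiction. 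I expect this near-completeness to be the main obstacle: the degree bound alone gives parts of size roughly $n/(r-1)$, but each vertex can miss up to about $\frac{3}{3r-4}n$ of the other parts. To guarantee common neighbourhoods of size linear in the chosen sets, I would count nonedges between parts and pick neighbours of $b$ with typical degree. If this fails, the fallback is to select the neighbours of $b$ inside each part among vertices with near-maximal cross degree, whose proportion is controlled by averaging.

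Thus each $b$ has fewer than $\eps n/2$ neighbours in its chosen part. Finally, a vertex $v\notin B$ gains at most $|B|\le2\sqrt\eta\, n$ new internal neighbours from $B$, so its internal degree is at most $3\sqrt\eta\, n\le\eps n$. Rescaling $\eps$ at the start makes all constants fit.
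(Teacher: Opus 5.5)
Your skeleton matches the paper's: the removal lemma, discarding the few vertices that lost many edges, Andr\'asfai--Erd\H{o}s--S\'os on the rest, and placing each exceptional vertex $b$ in the part where it has fewest neighbours. The bookkeeping at the start and end is fine. The gap is the one step that carries the weight, namely that an exceptional $b$ with at least $\eps n/2$ neighbours in every part forces a copy of $K_{1,s_1,\ldots,s_{r-1}}$.

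The justification you give for this step is false. Put $q=r-1$ and $\theta=\frac{3q-4}{3q-1}$. Minimum degree above $\theta n$ in a $q$-partite graph only forces part sizes between roughly $\frac{2n}{3q-1}$ and $\frac{3n}{3q-1}$. A vertex of $U_i$ may then have about $(1-\theta)n-|U_i|$, i.e.\ up to $\frac{n}{3q-1}$, non-neighbours in the other parts. That is a constant fraction of a part, far more than $\eps n/2$. So the hypothesis you invoke is not enough on its own. Already for $q=2$, two sets of size $\eps n/2$ in $U_1$ and $U_2$ can span no edges. A greedy choice of $s_1$ neighbours of $b$ in $U_1$, followed by common neighbours in $U_2$, can therefore fail at the first step. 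The conclusion has to come from $\deg(b)\ge(\theta+\eps)n$, which your sketch of this step never uses. Your fallbacks (``typical degree'', ``near-maximal cross degree'') are not arguments. For $s_i\ge 2$ a greedy embedding is in any case the wrong tool; some supersaturation is needed.

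The missing idea is a global count of non-edges inside $N(b)$, weighted by part sizes. Set $N_i:=N(b)\cap U_i$, $d_i:=|N_i|$ and $m_i:=(1-\theta)n-|U_i|\ge 0$. Then $\sum_i m_i\le (q(1-\theta)-1+o(1))n$, and every vertex of $N_i$ has at most $m_i$ non-neighbours in $\bigcup_{j\ne i}U_j$. Let $x_{ij}$ be the non-edge density between $N_i$ and $N_j$. The bound on non-neighbours gives $\sum_{j\ne i}d_jx_{ij}\le m_i$, and hence $\sum_{i<j}(d_i+d_j)x_{ij}\le\sum_i m_i$. Since $\sum_k d_k\ge(\theta-o(1))n$ and $d_k\le(1-\theta)n$, every $d_i+d_j$ is at least $(\theta-(q-2)(1-\theta)-o(1))n$. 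The inequality $(2q-2)(1-\theta)-1<\theta$ then keeps $\sum_{i<j}x_{ij}$ bounded away from $1$. So a positive fraction of the transversals of $N_1\times\cdots\times N_q$ span $K_q$, and since each $d_k\ge\eps n/2$, $N(b)$ contains $\Omega(n^q)$ copies of $K_q$. Only now does supersaturation (\Cref{fact: Ksssss-free graph has small number}; the paper phrases it via a second application of the removal lemma) give $K_{s_1,\ldots,s_q}$ inside $N(b)$, and with $b$ a copy of $K_{1,s_1,\ldots,s_q}$.
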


\begin{proof}
    Put
    \[
        q:=r-1,\qquad
        \theta:=\frac{3q-4}{3q-1}.
    \]
    Thus, by assumption, we have $\delta(G) \geq (\theta + \varepsilon)n$.
    Fix a constant $0<\gamma\ll \min\{1/r,\eps\}$.

    Since $G$ is $K_{1,s_1,\ldots,s_{r-1}}$-free, \Cref{fact: Ksssss-free graph has small number} (applied with
    $s = \max\{s_1,\dots,s_{r-1}\}$)
    implies that $G$ contains $o(n^r)$ copies of $K_r$.  Hence, by
    \Cref{thm:graph-removal}, for all sufficiently large $n$ we may delete
    at most $\gamma^2n^2/2$ edges from $G$ to obtain a $K_r$-free graph
    $G'\subseteq G$.
    Call a vertex $v$ \emph{bad} if
    $d_{G'}(v)<d_G(v)-\gamma n$,
    and let $Z$ be the set of all bad vertices.  If $M$ is the number of
    deleted edges, then
    \begin{align}\label{eq:small-Z-AES}
        |Z|\gamma n
        \le \sum_{v\in V(G)}\bigl(d_G(v)-d_{G'}(v)\bigr)
        =2M\le \gamma^2n^2
        \Longrightarrow
        |Z|\le \gamma n.
    \end{align}
    Let $W:=V(G)\setminus Z$ and $G'':=G'[W]$.  For every $v\in W$,
    \[
        d_{G''}(v)
        \ge d_G(v)-\gamma n-|Z|
        \ge (\theta+\eps-2\gamma)n
        >\theta |W|.
    \]
    Hence \Cref{thm:AES-Krfree} implies that $G''$ is $q$-partite.  Fix a
    partition $W=W_1\cup\cdots\cup W_q$
    such that each $W_i$ is independent in $G''$.

    We now record some simple facts regarding the sets $W_1,\dots,W_q$.
    Since every vertex of $W$ loses at most $\gamma n$ incident edges when
    passing from $G$ to $G'$, we have $d_G(x,W_i)\le \gamma n$ for every $x\in W_i$.
    Moreover, for every non-empty $W_i$ and every $x\in W_i$,
    \[
        |W_i|\le n-d_{G''}(x)
        \le (1-\theta-\eps+2\gamma)n\le
        (1-\theta)n.
    \]
    The same upper bound is trivial when $W_i=\varnothing$.
    Define $m_i:=(1-\theta)n-|W_i|\ge 0$.
    We have
    \begin{equation}\label{eq:sum-mi}
        \sum_{i=1}^q m_i
        =q(1-\theta)n-|W|= (q(1-\theta)-1)n + |Z|
        \le \left(q(1-\theta) - 1 +\gamma\right)n,
    \end{equation}
    using the value of $\theta$ and \eqref{eq:small-Z-AES}.
    For $x\in W_i$, let
    $\overline d_i(x)
    :=|W\setminus W_i|-d_G(x,W\setminus W_i)$
    be the number of non-neighbors of $x$ (in the graph $G$) in the set $W \setminus W_i$.
    Since
    \[
        d_G(x,W\setminus W_i)
        \ge d_G(x)-d_G(x,W_i)-|Z|
        \ge (\theta+\eps-\gamma)n-|Z|
    \]
    and $|W\setminus W_i|= n - |W_i| - |Z| = \theta n+m_i-|Z|$, it follows that
    \begin{equation}\label{eq:non-neighbors in W-W_i}
    \overline d_i(x)\le m_i-(\eps-\gamma)n.
    \end{equation}

    We now extend $W_1,\ldots,W_q$ to a partition $V(G)=V_1\cup\cdots\cup V_q$ as follows: For each $z \in Z$, let $h \in [q]$ such that
    $d_G(z,W_h)$ is minimal (over all indices in $[q]$), and put $z$ into $V_h$.
    Thus, for every $i \in [q] \setminus \{h\}$ it holds that
    $
    d_G(z,W_h) \le d_G(z,W_i).
    $

    Suppose for a contradiction that $\Delta(G[V_h])> \varepsilon n$ for some
    $h\in[q]$, and choose $a \in V_h$ with $d_G(a,V_h)> \varepsilon n$.
    First, note that
    $a \in V_h \setminus W_h \subseteq Z$. Indeed, this is because for every
    $v \in W_h$, it holds that
    $d_G(v,V_h) \leq (d_G(v) - d_{G'}(v)) + |Z| \leq 2\gamma n \leq \varepsilon n$,
    using that $G''[W_i]$ is independent, that $|Z| \leq \gamma n$, and that
    $\gamma \ll \varepsilon$.

    Thus, $a \in Z$. Now, for each $i\in[q]$, put $B_i:=N_G(a)\cap W_i$ and $b_i:=|B_i|$. The fact that $a \in V_h$ (i.e., the definition of the partition $V_1,\dots,V_q$) means that
    $b_i = d_G(a,W_i) \geq d_G(a,W_h)$. Hence,
    \begin{equation}\label{eq:Bi-linear}
        b_i\ge
        d_G(a,W_h) \geq
        d_G(a,V_h) - |Z|
        \ge
        \varepsilon n-|Z|\ge \frac{\varepsilon n}{2}
        \qquad\text{for every }i\in[q],
    \end{equation}
    using \eqref{eq:small-Z-AES}.
    Also,
    \begin{equation}\label{eq:sum-bi-lower}
        \sum_{i=1}^q b_i=d_G(a,W)
        \ge \delta(G)-|Z|
        \ge \left(\theta+\eps-\gamma\right)n \geq \theta n.
    \end{equation}

    Let $F:=G[B_1,\ldots,B_q]$ be the $q$-partite graph consisting of the
    edges of $G$ between the sets $B_1,\dots,B_q$.  The graph $F$ is
    $K_{s_1,\ldots,s_q}$-free, since such copy together with $a$ would
    give a copy of $K_{1,s_1,\ldots,s_q}$ in $G$, contradicting the assumption of the lemma.
    Therefore, by \Cref{fact: Ksssss-free graph has small number},
    $F$ has only $o(n^q)$ copies of $K_q$.  By
    \Cref{thm:graph-removal}, we may delete $o(n^2)$ edges from $F$ to
    obtain a $K_q$-free graph $F_0$.

    For $1\le i<j\le q$, let $\overline e_{ij}$ be the number of non-edges
    of $F_0$ between $B_i$ and $B_j$, and put $x_{ij}:=\frac{\overline e_{ij}}{b_ib_j}$.
    We use the symmetric notation
    $\overline e_{ji}:=\overline e_{ij}$ and $x_{ji}:=x_{ij}$.
    Every transversal $q$-tuple in $B_1\times\cdots\times B_q$ contains
    a non-edge of $F_0$ (because $F_0$ is $K_q$-free). Hence the union bound gives
    $\sum_{1\le i<j\le q}x_{ij}\ge 1$.

For $1 \leq i < j \leq q$, let
$\overline e^{\,F}_{ij}$ denote the number of
non-edges between $B_i$ and $B_j$ in $F$.
Note that
$
\overline e^{\,F}_{ij} \geq \overline e_{ij} - o(n^2),
$
by the choice of $F_0$.

Now fix any $i \in [q]$.
Note that by \eqref{eq:non-neighbors in W-W_i}, every $x \in B_i$ has at most
$m_i-(\eps-\gamma)n$ non-neighbors in
$\bigcup_{j: j\ne i}B_j$, because
$\bigcup_{j: j\ne i}B_j\subseteq W\setminus W_i$. Summing over $x\in B_i$, we obtain
$$
\sum_{j : j\ne i} \overline e_{ij} \leq
\sum_{j : j\ne i}\overline e^{\,F}_{ij} + o(n^2)
\le b_i\bigl(m_i-(\eps-\gamma)n\bigr) + o(n^2) \leq
b_im_i,
$$
where the last inequality uses that $b_i = \Omega(n)$, by \eqref{eq:Bi-linear}.
We now obtain
\begin{equation}\label{eq:weighted-missing}
    \sum_{j : \; j\ne i}b_jx_{ij}
    =\frac{1}{b_i}\sum_{j\ne i}\overline e_{ij}
    \le m_i.
\end{equation}
    Summing \eqref{eq:weighted-missing} over $i \in [q]$ , we get
    $$
    \sum_{1\le i<j\le q}(b_i+b_j)x_{ij} =
    \sum_{i=1}^q \sum_{j : \; j\ne i}b_jx_{ij}
    \le
    \sum_{i=1}^q m_i
    \leq
    \left(q(1-\theta) - 1 +\gamma\right)n,
    $$
    where the last inequality uses
    \eqref{eq:sum-mi}.
    Together with $\sum_{1\le i<j\le q}x_{ij}\ge 1$, this implies that there exist
    $i<j$ with
    $b_i+b_j\le
    \left(q(1-\theta) - 1 +\gamma\right)n$.
    Since $b_k\le |W_k|\le (1-\theta)n$ for every $k \in [q]$, we conclude that
    \[
        \sum_{k=1}^q b_k
        \le
        \left((2q-2)(1-\theta) - 1+\gamma\right)n.
    \]
    However,
    this contradicts \eqref{eq:sum-bi-lower} (with some room to spare), because
    $(2q-2)(1-\theta) - 1 < \theta$ for
    $\theta = \frac{3q-4}{3q-1}$.
    We conclude that $\Delta(G[V_i])\le \varepsilon n$ for every $i\in[q]$, as required.
\end{proof}

\section{New values of homomorphism threshold}\label{sec:K1st hom}
The proof of \Cref{thm: general lowbd for hom chi=3 and exact values} reduces to the following two statements.

\begin{theorem}\label{thm: general lowbd for hom K_1st}
    For all integers $1\le s\le t$,
        $\delta_{\mathrm{hom}}(K_{1,s,t})\ge \max\Big\{\frac{1}{3}, \frac{s}{1+s+t}\Big\}.$
\end{theorem}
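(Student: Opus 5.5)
The bound $\frac13$ is immediate: $\delta_{\mathrm{hom}}(K_{1,s,t})\ge\delta_\chi(K_{1,s,t})=\frac13$ by \cite{ALLEN2013261}, since $K_{1,s,t}$ is $3$-chromatic and its decomposition family contains a forest. So the work is the bound $\delta_{\mathrm{hom}}(K_{1,s,t})\ge\frac{s}{1+s+t}$, which is only relevant when it exceeds $\frac13$. Fix $\gamma<\frac{s}{1+s+t}$ and any $K_{1,s,t}$-free target $F$ with $|V(F)|=N$. The goal is to build a $K_{1,s,t}$-free graph $G$ with $\delta(G)\ge\gamma|V(G)|$ and no homomorphism to $F$. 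Following the outline in the introduction, $G$ will consist of a small coding gadget $Q$ together with large degree reservoirs.

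\textbf{The gadget.} Take two families $\mathcal A=\{A_1,\dots,A_M\}$ and $\mathcal B=\{B_1,\dots,B_M\}$ of vertex-disjoint cliques of size $k$, where $k$ is roughly $s+t$ or $s+1$ and is chosen so that a $K_k$ is harmless in $G$ but $K_{1,s,t}\subseteq K_{1+s+t}$. For each pair $(A_i,B_j)$, add one edge between a uniformly random vertex of $A_i$ and a uniformly random vertex of $B_j$.

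Now suppose $\phi\colon G\to F$ is a homomorphism. Each clique maps injectively, so it has an ordered image pattern in $V(F)^k$, and there are at most $N^k$ patterns. By pigeonhole, with $M\gg N^k$, there are many $A_i$ sharing a pattern $P$ and many $B_j$ sharing a pattern $P'$. A union bound over all choices of patterns and index sets of size $L$ shows that, whp, for every such choice and every position pair $(p,q)\in[k]^2$, some pair among them realises the edge at positions $(p,q)$. The failure probability for a fixed choice is about $k^2(1-k^{-2})^{L^2}$, which beats $\binom{M}{L}^2$ once $L\gg k^2\log M$ and $M\ge N^k L$. Hence $P\cup P'$ spans a complete graph in $F$ minus the diagonal coincidences. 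Arranging the cliques so that the images are forced to be disjoint (for instance by making $A_i\cup B_j$ structures share neighbours), $F$ contains $K_{2k}\supseteq K_{1,s,t}$ when $2k\ge1+s+t$, a contradiction. Within $G$ itself, the gadget is a disjoint union of cliques of size $k<1+s+t$ plus a sparse random bipartite layer. Taking high girth of the auxiliary bipartite graph (alteration), one checks that $Q$ contains no $K_{1,s,t}$ provided each $K_{1,s,t}$ would need a $K_{s+1}$-like dense piece spanning two cliques.

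\textbf{The reservoirs.} Add sets $R_1,\dots,R_m$ of total size much larger than $Q$, so that every vertex has at least a $\gamma$-fraction neighbourhood while no vertex's neighbourhood contains the complete bipartite pattern $K_{s,t}$. The natural template is a blowup of a $(1+s+t)$-vertex structure: partition the reservoir into $1+s+t$ classes, make each vertex adjacent to $s$ of the classes in a balanced cyclic pattern, and attach each gadget vertex to $s$ classes as well. This gives degree about $\frac{s}{1+s+t}n$. The adjacency pattern should be incomplete enough that any copy of $K_{1,s,t}$ would need the centre's neighbourhood to hold $s$ vertices completely joined to $t$ others, which the cyclic pattern forbids. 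Since $\gamma<\frac{s}{1+s+t}$, we have room to absorb $|Q|$.

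\textbf{Main obstacle.} The hardest step will be the simultaneous design of the reservoir attachment and the verification of $K_{1,s,t}$-freeness of $G$, since mixed copies using both gadget vertices and reservoir vertices must be excluded. I would first try to make gadget cliques attach to reservoir classes that are mutually non-adjacent, so that each neighbourhood splits into sets with no $K_{s,t}$ across them. I would then do a case analysis on where the centre vertex lies.
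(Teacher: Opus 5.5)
There is a genuine gap. Your architecture (a random two-family clique gadget plus degree reservoirs) matches the paper's, and so does your gadget argument: pigeonhole on ordered image patterns, then randomness realises every position pair. You do not need to force the images apart. Each $u_pv_q$ is an edge of the loopless target, so $u_p\neq v_q$ automatically and the union is a clique on $k_A+k_B$ vertices. No high-girth alteration is needed either.

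The gap is that the step which actually produces $\frac{s}{1+s+t}$ (the reservoir attachment and the $K_{1,s,t}$-freeness check) is left open, and the choices you sketch fail in general. Take equal clique sizes $k$ with $2k\ge 1+s+t$, as you propose; then $k\ge s+1$. Any $s+1$ vertices of $A_i$ already have $k-s-1$ common neighbours inside $A_i$, so all but $O(1)$ outside vertices see at most $s$ vertices of $A_i$. Hence the outside neighbourhood of every clique has size at least $\frac{k}{s}\gamma n-O(1)$. When $s+t$ is even (in particular $s=t$) you are forced to take $2k\ge 2+s+t$. Then for $\gamma>\frac{s}{2+s+t}$ the outside neighbourhoods of every $A_i$ and every $B_j$ overlap linearly. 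Averaging gives a vertex adjacent to a positive fraction of the cliques of both families. Its neighbourhood then contains the random cross edges among those cliques, and a Janson-type union bound shows these contain $K_{s,t}$ with high probability, giving $K_{1,s,t}$.

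Your reservoir template also fails read literally as a blowup of an $s$-regular pattern on $1+s+t$ classes. It contains $K_{1,s,t}$ as soon as three classes are pairwise joined. For $s=t\ge3$ every $s$-regular pattern on $2s+1$ classes has a triangle, by Andr\'asfai--Erd\H{o}s--S\'os, since a regular bipartite graph has even order.

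The missing idea is to choose clique sizes summing to exactly $1+s+t$, with both at least $s$, and to give the two families disjoint reservoirs that feed each other. The paper uses sizes $t+1$ and $s$, and takes $X=X_1\cup\dots\cup X_{t+1}$ with $|X_i|=N$ and $Y$ with $|Y|=sN$. It makes $(X,Y)$ and $(V_s,Y)$ complete, and joins $a_j^p$ to $X_i$ if and only if $i-p\in\{0,\dots,s-1\}\pmod{t+1}$. Thus each $X$-vertex sees exactly $s$ vertices of every $A_j$, and every vertex has degree at least $sN$ among $(1+s+t)N+O(m)$ vertices. For $s+t$ odd, your equal split $k=\frac{1+s+t}{2}$ would also work with the analogous cyclic attachment on both sides; this does not extend to $s+t$ even.

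Freeness follows because $X$ sees only the $(t+1)$-cliques and $Y$ only the $s$-cliques, so no reservoir vertex sees a random cross edge:
\begin{itemize}
\item For a vertex of $X$ or $Y$, the neighbourhood is disjoint copies of $K_s$ plus isolated vertices.
\item For $b\in B_i$, $G[N(b)]$ is $K_{s-1}$ joined to the independent set $Y$, plus isolated vertices.
\item For $a\in A_j$, $G[N(a)]$ is $K_t$ plus independent $X$-vertices, each adjacent to only $s-1$ vertices of that $K_t$, plus isolated vertices.
\end{itemize}
None of these contains $K_{s,t}$.
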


\begin{theorem}\label{thm: partial uppbd for hom K_1st}
    For all integers $1\le s\le t$ with
    $(t-s+1)^2 \leq s$ and $3s\ge 2+2t$, $\delta_{\mathrm{hom}}(K_{1,s,t})\le \frac{s}{1+s+t}.$
\end{theorem}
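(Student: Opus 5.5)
The plan is to combine the almost-bipartite structure given by \Cref{lem:AES-almost-partite} with a Brooks-type colouring of each side, and to map $G$ to the clique $K_{s+t}$. Since $K_{1,s,t}$ has $s+t+1$ vertices, $K_{s+t}$ is $K_{1,s,t}$-free and can serve as the target $F$. So fix $\gamma=\frac{s}{1+s+t}+\eps$ and a $K_{1,s,t}$-free $G$ with $\delta(G)\ge\gamma n$ and $n$ large. (Small $n$ is absorbed into $F$ by taking a disjoint union with all small $K_{1,s,t}$-free graphs.) It suffices to find a partition $V(G)=V_1\cup V_2$ with $\chi(G[V_1])+\chi(G[V_2])\le s+t$. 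Indeed, colouring the two sides with disjoint palettes gives a proper $(s+t)$-colouring, i.e.\ a homomorphism to $K_{s+t}$.

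\emph{Step 1 (partition).} The hypothesis $3s\ge 2+2t$ is exactly $\frac{s}{1+s+t}\ge\frac25=\frac{3r-7}{3r-4}$ for $r=3$. Hence \Cref{lem:AES-almost-partite} (with a smaller $\eps'\ll\eps$) gives $V_1\cup V_2$ with $\Delta(G[V_i])\le\eps' n$. Write $|V_2|=\beta n$ and $|V_1|=(1-\beta)n$. Every $x\in V_1$ has at least $(\gamma-\eps')n$ neighbours in $V_2$, so it misses at most $(\beta-\gamma+\eps')n$ vertices of $V_2$. Consequently any $k$ vertices of $V_1$ have at least $(\beta-k(\beta-\gamma+\eps'))n$ common neighbours in $V_2$, and symmetrically for sets in $V_2$ with $1-\beta$ in place of $\beta$. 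Whenever this quantity is a positive constant times $n$, it exceeds $t$ for large $n$.

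\emph{Step 2 (Brooks-type bound).} Let $a_1$ be the largest integer $k$ such that $k$ vertices of $V_1$ are guaranteed at least $\max$-type many common neighbours in $V_2$; more precisely, define $a_1$ from the inequality $\beta-k(\beta-\gamma)>0$. Suppose some $x\in V_1$ has $d$ internal neighbours. Then $x$ together with any $s$ of them forms a star $K_{1,s}$ inside $V_1$. If these $s+1$ vertices have at least $t$ common neighbours in $V_2$, we get $K_{1,s,t}$ with apex $x$, the $s$-part among its leaves (independence of the $s$-part is not needed, since $K_{1,s,t}\subseteq$ any supergraph), and the $t$-part in $V_2$. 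Alternatively, take $t$ leaves and $s$ common neighbours. This bounds $\Delta(G[V_1])$ by a quantity $D_1(\beta)$, and the same star argument excludes $K_{D_1+1}$. Brooks' theorem then gives $\chi(G[V_1])\le D_1(\beta)$, or $\le 3$ in the odd-cycle case, which is handled by the numerics. Similarly $\chi(G[V_2])\le D_2(\beta)$. Here the two options (leaves in the $s$-part with common neighbourhood of size $t$, or leaves in the $t$-part with common neighbourhood of size $s$) should be used according to which side is larger. The smaller side has vertices whose cross-neighbourhoods cover most of the larger side, so it tolerates bigger stars.

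\emph{Step 3 (numerics) --- the main obstacle.} I must verify $D_1(\beta)+D_2(\beta)\le s+t$ for every admissible $\beta\in[\gamma,1-\gamma]$. The first thing I would try is to write $D_i$ as floor functions of ratios such as $\beta/(\beta-\gamma)$ and $(1-\beta)/(1-\beta-\gamma)$, using the margin $\eps$ to make all inequalities strict. At the balanced point the sum is about $2s$ while near the extremes one side grows towards $t$. I expect the condition $(t-s+1)^2\le s$ to be exactly what controls the worst intermediate $\beta$, where the integer parts of the two bounds interact. This convexity and floor bookkeeping, including choosing for each $\beta$ the best of the two star-extension options on each side, is the step I expect to be delicate. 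The structural steps above are routine given \Cref{lem:AES-almost-partite}.
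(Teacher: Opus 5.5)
Your skeleton matches the paper's: the almost-bipartition from \Cref{lem:AES-almost-partite}, a Brooks bound on each side, and a homomorphism into $K_{s+t}$. However, Step 2 as you set it up cannot deliver the bounds you need, and Step 2 is the heart of the proof.

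\textbf{Why the union bound fails.} You define $a_1$ through the union bound $\beta-k(\beta-\gamma)>0$, that is, from the guarantee that \emph{every} $k$-subset of $V_1$ has a linear common neighbourhood in $V_2$. This guarantee is uniform over $k$-sets and does not improve with the number $d$ of internal neighbours of $x$. So the star argument built on it can only show that $G[V_1]$ is $K_{1,s}$-free (when $a_1\ge s+1$), and gives nothing otherwise. The variant with $t$ leaves and $s$ common neighbours needs $a_1\ge t+1$, so it is subsumed. No intermediate bound $D_1$ with $s<D_1\le t$ can come out of this.

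The failure occurs inside the theorem's range. Take $(s,t)=(4,5)$, where $(t-s+1)^2=s$ and $3s=2+2t$. Let $\gamma\approx 2/5$, $|A|\approx 0.45n$ and $|B|\approx 0.55n$. Then $G[B]$ is $K_{1,4}$-free, so you must show $\chi(G[A])\le 5$. But a vertex of $A$ may miss about $0.15n\approx 0.27|B|$ vertices of $B$. The union bound then only guarantees common neighbours for triples, and your argument gives no bound at all on $\chi(G[A])$.

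Also, the claim that ``the same star argument excludes $K_{D_1+1}$'' is not valid. A $(D_1+1)$-clique has maximum degree $D_1$, so a degree bound does not rule it out; a separate count is needed.

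\textbf{The missing idea.} What is missing is the paper's Brooks-type lemma (\Cref{lem:Brooks application}), which uses a double count around an apex rather than a union bound. Suppose $x\in X$ has internal neighbours $x_1,\dots,x_r$. Then all but $O_{s,t,r}(1)$ vertices of $N_Y(x)$ see at most $s-1$ of the $x_i$. Otherwise, pigeonholing over $s$-subsets gives $t$ vertices with the same $s$ neighbours, and hence a $K_{1,s,t}$ with apex $x$. It follows that $\sum_i d_Y(x_i)\le (s-1)d_Y(x)+r(|Y|-d_Y(x))+O(1)$. Comparing this with $\sum_i d_Y(x_i)>s|Y|$, under $d_Y(\cdot)\ge (s/r+\varepsilon)|Y|$, forces $(r-s)^2>s$. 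Hence $\Delta(G[X])\le r-1$ whenever $(r-s)^2\le s$.

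For cliques, an $r$-clique in $X$ has cross-degree sum exceeding $(s+r\varepsilon)|Y|$. So linearly many $y\in Y$ see $s+1$ clique vertices, and pigeonholing again gives $K_{1,s,t}$. Thus $G[X]$ is $K_r$-free, and Brooks gives $\chi(G[X])\le r-1$. In the example above, the relative degree from $A$ is about $0.73>4/6$, so $r=6$ and $(6-4)^2\le 4$, giving $\chi(G[A])\le 5$.

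\textbf{Step 3 is then easy.} With this lemma the numerics are not delicate. Choose $j$ with $\frac{s+j}{1+s+t}n<|A|\le\frac{s+j+1}{1+s+t}n$. The relative cross-degrees then exceed $s/(t-j+1)$ from $A$ and $s/(s+j+1)$ from $B$. The two values of $r$ sum to $s+t+2$, so the colour bounds sum to $s+t$. The hypothesis $(t-s+1)^2\le s$ is used exactly to check $(r-s)^2\le s$ for both values of $r$, not to control interactions between floors.
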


\subsection{Proof of \Cref{thm: general lowbd for hom K_1st}}
Our goal is to construct, for every fixed constant $C_0>0$ and every $\varepsilon>0$, a graph $G$ with
$\delta(G)\ge \Big(\frac{s}{1+s+t}-\varepsilon\Big)|V(G)|,$
such that $G$ is $K_{1,s,t}$-free but every homomorphism $\phi:G\to\Gamma$ with $|V(\Gamma)|\le C_0$ forces $\Gamma$ to contain a copy of $K_{1,s,t}$.
The construction uses randomness: we first build a suitable auxiliary gadget by a probabilistic argument, and then incorporate it into $G$.
We begin with the key gadget lemma.

\begin{lemma}\label{lem: random construction cliques}
    For all integers $k,\ell,C \ge 1$, there exists an integer $m_0=m_0(k,\ell,C)$ such that for every integer $m\ge m_0$ there is a graph $G$ with the following properties.
    \begin{enumerate}[(i)]
        \item $V(G)=V_k\cup V_\ell$, where $G[V_k]$ is the disjoint union of $m$ copies of $K_k$ and $G[V_\ell]$ is the disjoint union of $m$ copies of $K_\ell$.
        \item For every $K_k$-copy $A\subseteq V_k$ and every $K_{\ell}$-copy $B\subseteq V_{\ell}$, there is exactly one edge between \nolinebreak $A$ \nolinebreak and \nolinebreak $B$.
        \item Every homomorphic image of $G$ on at most $C$ vertices contains a copy of $K_{k+\ell}$.
    \end{enumerate}
    We denote such a graph by $H_{k,\ell,C}^{m}$.
\end{lemma}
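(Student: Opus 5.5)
The construction will be random. Take $m$ disjoint copies $A_1,\dots,A_m$ of $K_k$, with vertices labelled $a_i^1,\dots,a_i^k$, and $m$ disjoint copies $B_1,\dots,B_m$ of $K_\ell$, with vertices $b_j^1,\dots,b_j^\ell$. For every pair $(i,j)$, choose independently and uniformly a position $(p,q)\in[k]\times[\ell]$ and add the single edge $a_i^p b_j^q$. Properties (i) and (ii) then hold by construction, so the whole task is to show that (iii) holds with positive probability once $m$ is large.

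First we reduce (iii) to a statement about repeated patterns. Let $\phi:G\to\Gamma$ be a homomorphism with $|V(\Gamma)|\le C$. Since each $A_i$ is a clique, $\phi$ is injective on $A_i$, so $A_i$ has an ordered image $\sigma_i=(\phi(a_i^1),\dots,\phi(a_i^k))$, which is a tuple of distinct vertices of $\Gamma$. Likewise each $B_j$ has an ordered image $\tau_j$. There are at most $C^k$ patterns $\sigma$ and at most $C^\ell$ patterns $\tau$. By pigeonhole, some $\sigma$ is realised by a set $I$ of at least $m/C^k$ indices $i$, and some $\tau$ is realised by a set $J$ of at least $m/C^\ell$ indices $j$.

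Now suppose that for every position pair $(p,q)\in[k]\times[\ell]$ there exist $i\in I$ and $j\in J$ whose chosen edge is $a_i^p b_j^q$. Then $\phi$ maps this edge to an edge $\sigma(p)\tau(q)$ of $\Gamma$. In particular $\sigma(p)\ne\tau(q)$ for all $p,q$, since $\Gamma$ has no loops. Hence the $k+\ell$ vertices $\sigma(1),\dots,\sigma(k),\tau(1),\dots,\tau(\ell)$ are distinct. The entries of $\sigma$ are pairwise adjacent because $\phi(A_i)$ is a clique, the entries of $\tau$ are pairwise adjacent for the same reason, and every cross pair is adjacent by the above. So $\Gamma$ contains $K_{k+\ell}$.

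It therefore suffices to prove the following: with positive probability, for every $I,J\subseteq[m]$ with $|I|\ge m/C^k$ and $|J|\ge m/C^\ell$, all $k\ell$ position pairs occur among the edges between $\{A_i\}_{i\in I}$ and $\{B_j\}_{j\in J}$. This quantifier over all large $I,J$ is the main obstacle, because we must union bound over up to $4^m$ choices of $(I,J)$. For a fixed $(I,J)$ and fixed $(p,q)$, the $|I||J|\ge m^2/C^{k+\ell}$ edges have independent positions, so the probability that $(p,q)$ is missed is at most $(1-1/(k\ell))^{m^2/C^{k+\ell}}\le \exp\!\left(-m^2/(k\ell C^{k+\ell})\right)$. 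The union bound over all $(I,J)$ and all $(p,q)$ gives a failure probability of at most $4^m\cdot k\ell\cdot\exp\!\left(-m^2/(k\ell C^{k+\ell})\right)$. This is $o(1)$ because the exponent is quadratic in $m$ against the linear term $m\log 4$. So for $m\ge m_0(k,\ell,C)$ a good outcome exists, and that graph is $H_{k,\ell,C}^m$.

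The pattern classes depend on $\phi$, but this causes no problem: the property we verified holds simultaneously for all large $I,J$, so it applies to whatever classes any $\phi$ induces.
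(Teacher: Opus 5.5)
Your proposal is correct and follows the paper's proof essentially step for step: the same random placement of a single edge between each pair of cliques, the same pigeonhole on the ordered images of the cliques, and the same union bound over at most $4^m$ pairs $(I,J)$ and $k\ell$ position pairs. The only differences are cosmetic. You use the thresholds $m/C^k$ and $m/C^\ell$ where the paper uses a uniform $\beta m$ with $\beta=C^{-(k+\ell)}$, and you check explicitly that the $k+\ell$ image vertices are distinct, which the paper leaves implicit.
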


\begin{proof}
    We start with $m$ disjoint $K_k$-copies $A_1,\dots,A_m$ and $m$ disjoint $K_{\ell}$-copies $B_1,\dots,B_m$. Put $V_k := \bigcup_{i=1}^m A_i$ and $V_{\ell} := \bigcup_{i=1}^m B_i$ (the sets $V_k,V_{\ell}$ are disjoint).

    We now define the edges between $V_k$ and $V_{\ell}$. Write
    $A_i = \{ a_i^1,\dots,a_i^k \}$ and
    $B_i = \{b_i^1,\dots,b_i^\ell\}$.
    For each $(i,j) \in [m]^2$, choose a pair
    $c(i,j) = (p,q) \in [k] \times [\ell]$ uniformly at random and independently, and add the edge $a_i^p b_j^q$ (this is the unique edge between $A_i$ and $B_j$, as required by Property (ii)).

    Set $\beta:=\frac{1}{C^{k+\ell}}$.
    We have the following claim.

    \begin{claim}\label{claim: random coloring holds}
        With positive probability, for every pair of subsets $I,J\subseteq[m]$ with
        $|I|,|J| \geq \beta m$, and for every $(p,q) \in [k]\times[\ell]$, there is
        $(i,j) \in I \times J$ with
        $c(i,j) = (p,q)$.
    \end{claim}

    Assuming \Cref{claim: random coloring holds}, we verify Property (iii). Let $\Gamma$ be a graph on at most $C$ vertices, and let $\phi : G \rightarrow \Gamma$ be a homomorphism.
    Note that for each $i \in [m]$, there are at most $|\Gamma|^k \leq C^k$ options for the images
    $(\phi(a_i^1),\dots,\phi(a_i^k))$ of the set $A_i$. Hence, by the pigeonhole principle, there are vertices $u_1,\dots,u_k \in V(\Gamma)$ and a set $I \subseteq [m]$ with
    $|I| \geq \frac{m}{C^k}$, such that
    $(\phi(a_i^1),\dots,\phi(a_i^k)) = (u_1,\dots,u_k)$ for all $i \in I$.
    By applying the same argument to the sets $B_1,\dots,B_m$, we obtain $\ell$ vertices $v_1,\dots,v_{\ell} \in V(\Gamma)$ and a set $J \subseteq [m]$ with $|J| \geq \frac{m}{C^{\ell}}$, such that
    $(\phi(b_j^1),\dots,\phi(b_j^\ell)) = (v_1,\dots,v_{\ell})$ for all $j \in J$.
    Note that $\{u_1,\dots,u_k\}$ and $\{v_1,\dots,v_{\ell}\}$ are cliques in $\Gamma$ (since $\phi$ is a homomorphism).

    We claim $\{u_1,\dots,u_k,v_1,\dots,v_{\ell}\}$ is a clique in $\Gamma$; this would establish Property (iii). It suffices to show that for all
    $(p,q) \in [k] \times [\ell]$ it holds that $u_pv_q \in E(\Gamma)$.
    Note that $|I|,|J| \geq \beta m$.
    Hence, by \Cref{claim: random coloring holds}, there exist $(i,j) \in I \times J$ such that $c(i,j) = (p,q)$. By the definition of $G$, this means that
    $a_i^pb_j^q \in E(G)$, which in turn gives
    $u_pv_q = \phi(a_i^p)\phi(b_j^q) \in E(\Gamma)$, as required.

    It remains to prove \Cref{claim: random coloring holds}.

    \begin{poc}
        Fix $I,J\subseteq[m]$ with $|I|,|J| \geq \beta m$ and fix $(p,q)\in[k]\times[\ell]$. The probability that $c(i,j) \neq (p,q)$ for all $(i,j) \in I \times J$ equals
        \[
            \left(1-\frac1{k\ell}\right)^{|I||J|}
            \leq
            \left(1-\frac1{k\ell}\right)^{\beta^2 m^2} \leq
            e^{-\frac{\beta^2}{k\ell}m^2}.
        \]
        By the union bound over all at most $4^m$ choices for $I,J$ and all $k\ell$ choices for $(p,q)$, we see that the probability that the claim fails is at most
        $$
        4^m \cdot k\ell \cdot e^{-\frac{\beta^2}{k\ell}m^2},
        $$
        which is less than $1$ for all sufficiently large $m$. This completes the proof of the claim.
    \end{poc}
    \noindent
    With Claim \ref{claim: random coloring holds}, the proof of Lemma \ref{lem: random construction cliques} is complete.
\end{proof}

\begin{proof}[Proof of \Cref{thm: general lowbd for hom K_1st}]
    Since $\delta_{\mathrm{hom}}(K_{1,s,t})\ge \delta_{\chi}(K_{1,s,t}) = \frac{1}{3}$,
    it remains to show $\delta_{\mathrm{hom}}(K_{1,s,t})\ge \frac{s}{1+s+t}$.
    When $s = 1$, we have
    $\delta_{\mathrm{hom}}(K_{1,s,t})\ge
    \frac{1}{3}\ge \frac{s}{1+s+t}$, so it remains to prove \Cref{thm: general lowbd for hom K_1st} for $s\ge 2$.

    It suffices to show that, for every integer $C_0\ge1$ and every
    $\varepsilon>0$, there are arbitrarily large graphs $G$ such that
    \begin{itemize}
        \item $G$ is $K_{1,s,t}$-free and
        $\delta(G)\ge (\frac{s}{1+s+t}-\eps)|V(G)|$;
        \item every homomorphism $\phi:G\to\Gamma$ with
        $|V(\Gamma)|\le C_0$ forces $\Gamma$ to contain $K_{1,s,t}$.
    \end{itemize}

    We now construct such a graph.
    Choose an integer $m\ge m_0(t+1,s,C_0)$ from \Cref{lem: random construction cliques}, and let
    $H := H_{t+1,s,C_0}^m$ be the graph given by that lemma. Thus, $V(H) = V_{t+1} \cup V_s$;
    $V_{t+1}$ is the disjoint union of $(t+1)$-cliques $A_1,\dots,A_m$; $V_s$ is the disjoint union of $s$-cliques $B_1,\dots,B_m$; and there is exactly one edge between $A_i$ and $B_j$ for all $i,j \in [m]$. Next, fix
    $N \gg m,1/\varepsilon$, and add disjoint sets $X_1,\dots,X_{t+1},Y$ with $|X_i| = N$ and $|Y| = sN$.
    Put $X := \bigcup_{i=1}^{t+1} X_i$.
    We add the following edges:
    \begin{enumerate}[(1)]
        \item
        $(X,Y)$ and $(V_s,Y)$ are complete bipartite graphs.
        \item For each $j\in[m]$, write
        $V(A_j)=\{a_j^{p}: p\in[t+1]\}$. For $p\in[t+1]$ and $i\in[t+1]$, we join $a_j^{p}$ to all vertices of $X_i$ if $i-p\in\{0,1,\dots,s-1\}$ modulo $t+1$, and join $a_j^{p}$ to no vertex of $X_i$ otherwise.
    \end{enumerate}
    Let $G$ be the resulting graph.
    A routine degree computation shows that
    $$
    \delta(G)\ge sN \geq
    \left(\frac{s}{1+s+t}-\varepsilon\right)|V(G)|,
    $$
    where the last inequality holds if $N$ is large enough as a function of $m,1/\varepsilon$.
    By \Cref{lem: random construction cliques}, every homomorphic image of $G$ on at most $C_0$ vertices contains a copy of $K_{s+(t+1)}=K_{1+s+t}$, and hence also a copy of $K_{1,s,t}$.
    Thus it remains to check that $G$ is $K_{1,s,t}$-free. Equivalently, we prove the following.
    \begin{claim}
        For every $v\in V(G)$, we have $G[N(v)]$ is $K_{s,t}$-free.
    \end{claim}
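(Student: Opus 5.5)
The plan is a direct case analysis on the type of the vertex $v$. First I record the complete adjacency structure of $G$:
\begin{itemize}
\item the $A_j$ are $(t+1)$-cliques and the $B_j$ are $s$-cliques;
\item there is exactly one edge between $A_i$ and $B_j$;
\item $X$ and $Y$ are independent sets, $(X,Y)$ is complete and $(V_s,Y)$ is complete;
\item $a_j^p$ is joined to $X_i$ exactly for the $s$ indices $i\in\{p,\dots,p+s-1\}$ modulo $t+1$;
\item there are no other edges (in particular no $X$--$V_s$, $Y$--$V_{t+1}$, $X$--$X$ or $Y$--$Y$ edges).
\end{itemize}
Two general tools will be used: $K_{s,t}$ is connected, and its minimum vertex cover has size $s$. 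Also, a set of vertices that is independent and has no edges between its members must lie inside one part of any copy of $K_{s,t}$.

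\emph{Case $v\in Y$.} Here $N(v)=X\cup V_s$. In $G[N(v)]$ the vertices of $X$ are isolated and the components of $V_s$ are $s$-cliques. Every component therefore has fewer than $s+t$ vertices, which rules out the connected graph $K_{s,t}$.

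\emph{Case $v\in X_i$.} Here $N(v)=Y\cup\{a_j^p : i-p\in\{0,\dots,s-1\}\}$. The set $Y$ is independent and has no neighbours in $V_{t+1}$. Inside each $A_j$ the neighbours of $v$ form an $s$-clique, and different $A_j$ are non-adjacent. Again every component has at most $s<s+t$ vertices.

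\emph{Case $v\in B_j$.} Here $N(v)$ consists of three pieces: $B_j\setminus\{v\}$, the set $Y$, and some vertices of $V_{t+1}$, at most one from each $A_i$. Each such $a\in A_i$ has its unique $B_j$-neighbour equal to $v$ and has no neighbours in $Y$, and distinct $A_i$ are non-adjacent. So these vertices are isolated in $G[N(v)]$. On the rest, $Y$ is independent, so $B_j\setminus\{v\}$ (of size $s-1$) is a vertex cover. Since $K_{s,t}$ needs a vertex cover of size $s$, there is no copy.

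\emph{Case $v=a_j^p$.} This is the main case. By the same uniqueness-of-edge argument, the $B$-neighbours of $v$ are isolated in $G[N(v)]$: they have no neighbours in $A_j\setminus\{v\}$, none in $X$, and their $Y$-neighbours are not in $N(v)$. It remains to consider $A':=A_j\setminus\{v\}$, a $t$-clique, together with $X':=X_p\cup\dots\cup X_{p+s-1}$, which is independent.

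Suppose a copy of $K_{s,t}$ with parts $P,Q$ lies in $A'\cup X'$.
\begin{itemize}
\item If the copy avoids $X'$, it has only $t<s+t$ vertices, which is impossible.
\item Otherwise, since $X'$ is independent, all $X'$-vertices of the copy lie in one part, say $P$. Then $Q\subseteq A'$, and $Q$ is contained in the neighbourhood of a single $x\in X_i\cap P$ inside $A'$.
\item That neighbourhood is $\{a_j^{p'} : i-p'\in\{0,\dots,s-1\},\,p'\neq p\}$. It has exactly $s-1$ elements, because $p$ itself is one of the $s$ admissible indices.
\item Hence $|Q|\le s-1<s\le\min\{s,t\}$, a contradiction.
\end{itemize}

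The only delicate point is this last count: the rotational attachment pattern, with $s$ consecutive indices modulo $t+1$, guarantees that each $X$-vertex sees exactly $s-1$ vertices of $A_j\setminus\{v\}$. It is worth double-checking that the cyclic index arithmetic gives exactly this, using $s\le t$ so that the $s$ indices are distinct modulo $t+1$.
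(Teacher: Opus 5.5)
Your proof is correct and follows the paper's own argument: the same case split over $v\in Y$, $v\in X$, $v\in V_s$, $v\in V_{t+1}$, with the same key step in the main case that an $X$-vertex in the copy forces the opposite part into its at most $s-1$ neighbours in $A_j\setminus\{v\}$. The only cosmetic differences are that you make the case $v\in B_j$ explicit via the vertex cover $B_j\setminus\{v\}$ of size $s-1$ (where the paper just says it is easy to see), and you count $s-1$ neighbours in $A_j\setminus\{v\}$ rather than the paper's $s$ neighbours in $A_j$.
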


    \begin{poc}
        We consider the following cases:
        \begin{itemize}
            \item $v \in Y$. Then $N(v)$ consists of isolated vertices and disjoint copies of $K_s$, and is hence $K_{s,t}$-free.
            \item $v \in X$.
            Each vertex in $X$ has exactly $s$ neighbors in every clique $A_j$. Hence $N(v)$ consists of isolated vertices and disjoint copies of $K_s$, and is hence $K_{s,t}$-free.
            \item $v \in B_i$ for some $i \in [m]$. Note that the vertices in $N(v) \cap V_{t+1}$ are isolated in $N(v)$, because $v$ has at most one neighbor in each $A_j$. Hence, $N(v)$ consists of isolated vertices, the $(s-1)$-clique $B_i \setminus \{v\}$, and the complete bipartite graph between $Y$ and $B_i \setminus \{v\}$. It is now easy to see that $N(v)$ is $K_{s,t}$-free (as $s \leq t$).
            \item $v \in A_i$ for some $i \in [m]$. Without loss of generality, $i=1$ and
            $v = a_1^1$.
            Suppose by contradiction that $N(v)$ contains a copy $K$ of $K_{s,t}$.
            We have
            $N(v) \subseteq (A_1 \setminus \{a_1^1\}) \cup X \cup V_s$.
            The vertices in $N(v) \cap V_s$ are isolated in $N(v)$, and hence cannot participate in $K$.
            Also, we must have $V(K) \cap X \neq \emptyset$, because
            $|A_1 \setminus \{a_1^1\}| = t < s+t$. Now, the vertices of $K$ in $X$ must be in the same part of the bipartition of $K$, since $X$ is independent; the other part of $K$ is contained in $A_1$. Taking any $u \in V(K) \cap X$, we see that $u$ must have at least $s+1$ neighbors in $A_1$ (i.e., the vertex $v$ as well as $s$ vertices of $K$). But by (2), every vertex in $X$ is adjacent to only $s$ vertices of $A_1$, a contradiction.       This completes the proof.\qedhere
        \end{itemize}
    \end{poc}
    
This completes the proof of \Cref{thm: general lowbd for hom K_1st}.
\end{proof}

\begin{figure}[t]
\centering
\resizebox{\textwidth}{!}{%
\begin{tikzpicture}[
  font=\small,
  line cap=round, line join=round,
  small/.style={draw, circle, thick, inner sep=1.2pt},
  box/.style={draw, thick, rounded corners},
  edge/.style={semithick, shorten <=1pt, shorten >=1pt},
  heavy/.style={very thick, shorten <=1pt, shorten >=1pt},
  gadgetR/.style={dashed, semithick, shorten <=1pt, shorten >=1pt, red!70!black},
  gadgetB/.style={dashed, semithick, shorten <=1pt, shorten >=1pt, blue!70!black},
  gadgetG/.style={dashed, semithick, shorten <=1pt, shorten >=1pt, green!55!black},
  faint/.style={text=gray!60}
]

\draw[box] (-4,2) rectangle (-1,4);
\node[anchor=east] at (-4.15,3.6) {$X=\bigsqcup_{i=1}^{3}X_i$};

\node[box, minimum width=0.95cm, minimum height=0.85cm] (X1) at (-3.4,3.1) {$X_1$};
\node[box, minimum width=0.95cm, minimum height=0.85cm] (X2) at (-2.5,3.1) {$X_2$};
\node[box, minimum width=0.95cm, minimum height=0.85cm] (X3) at (-1.6,3.1) {$X_3$};

\node at (-2.5,2.3) {\scriptsize $|X_i|=N$};

\draw[box] (-4,-0.5) rectangle (-1,1);

\node at (-2.5,0.4) {$Y$};
\node at (-2.5,-0.1) {\scriptsize $|Y|=2 N$};

\draw[heavy] (-2.5,2) -- (-2.5,1);
\draw[heavy] (-1,0.25) -- (1.85,0.25);

\draw[box] (1.85,1.75) rectangle (9.15,4);
\node[anchor=west] at (9.30,3.6) {$V_{t+1}$ \quad ($t+1=3$)};

\node[small] (a11) at (3.00,3.18) {$a_1^1$};
\node[small] (a12) at (3.90,3.18) {$a_1^2$};
\node[small] (a13) at (3.45,2.38) {$a_1^3$};
\draw[edge] (a11)--(a12);
\draw[edge] (a12)--(a13);
\draw[edge] (a13)--(a11);

\node[small] (a21) at (5.30,3.18) {$a_2^1$};
\node[small] (a22) at (6.20,3.18) {$a_2^2$};
\node[small] (a23) at (5.75,2.38) {$a_2^3$};
\draw[edge] (a21)--(a22);
\draw[edge] (a22)--(a23);
\draw[edge] (a23)--(a21);

\node[small] (a31) at (7.60,3.18) {$a_3^1$};
\node[small] (a32) at (8.50,3.18) {$a_3^2$};
\node[small] (a33) at (8.05,2.38) {$a_3^3$};
\draw[edge] (a31)--(a32);
\draw[edge] (a32)--(a33);
\draw[edge] (a33)--(a31);

\node[faint] at (7.05,3.55) {\large$\dots$};

\draw[box] (1.85,-0.8) rectangle (9.15,1);
\node[anchor=west] at (9.30,0.6) {$V_s$ \quad ($s=2$)};

\node[small] (b11) at (3.00,0.15) {$b_1^1$};
\node[small] (b12) at (3.90,0.15) {$b_1^2$};
\draw[edge] (b11)--(b12);

\node[small] (b21) at (5.30,0.15) {$b_2^1$};
\node[small] (b22) at (6.20,0.15) {$b_2^2$};
\draw[edge] (b21)--(b22);

\node[small] (b31) at (7.60,0.15) {$b_3^1$};
\node[small] (b32) at (8.50,0.15) {$b_3^2$};
\draw[edge] (b31)--(b32);

\node[faint] at (7.05,-0.35) {\large$\dots$};

\draw[gadgetR] (a11.south)      to[out=-90,in=90]  (b11.north);
\draw[gadgetB] (a12.south)      to[out=-80,in=115] (b21.north west);
\draw[gadgetG] (a13.south east) to[out=-60,in=150] (b32.north);

\draw[gadgetR] (a23.south west) to[out=-115,in=70] (b12.north);
\draw[gadgetB] (a22.south)      to[out=-90,in=90]  (b21.north);
\draw[gadgetG] (a21.south east) to[out=-70,in=120] (b31.north west);

\draw[gadgetR] (a32.south west) to[out=-120,in=45] (b11.north east);
\draw[gadgetB] (a31.south west) to[out=-105,in=65] (b22.north);
\draw[gadgetG] (a33.south)      to[out=-90,in=90]  (b32.north);

\coordinate (x1a) at (-3.58,3.43);
\coordinate (x1b) at (-3.22,3.43);

\coordinate (x2a) at (-2.68,2.77);
\coordinate (x2b) at (-2.32,2.77);

\coordinate (x3a) at (-1.23,3.24);
\coordinate (x3b) at (-1.23,2.94);

\draw[edge] (a11.west) to[out=188,in=20]   (x1a);
\draw[edge] (a11.west) to[out=212,in=-8]   (x2a);

\draw[edge] (a12.west) to[out=200,in=-20]  (x2b);
\draw[edge] (a12.west) to[out=222,in=8]    (x3a);

\draw[edge] (a13.west) to[out=168,in=-10]  (x3b);
\draw[edge] (a13.west) to[out=150,in=35]   (x1b);

\end{tikzpicture}%
}
\caption{The construction in the proof of \Cref{thm: general lowbd for hom K_1st} in the case $s=t=2$.}
\end{figure}

\subsection{Proof of \Cref{thm: partial uppbd for hom K_1st}}
We begin with the classical Brooks theorem.

\begin{theorem}[Brooks~\cite{brooks1941colouring}]\label{thm:brooks}
Let $G$ be a connected graph.
Then $\chi(G)\le \Delta(G),$
unless $G$ is a complete graph $K_{\Delta+1}$ or an odd cycle, in which case $\chi(G)=\Delta+1$.
\end{theorem}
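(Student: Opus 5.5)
This is Brooks' classical theorem, cited from \cite{brooks1941colouring}, so in the paper it is simply invoked. If I were to prove it, I would follow Lov\'asz's greedy-colouring argument. Write $\Delta=\Delta(G)$. The cases $\Delta\le 2$ are immediate: $G$ is then a path or a cycle, and odd cycles are exactly the exceptions. So assume $\Delta\ge 3$ and $G\neq K_{\Delta+1}$; the goal is a proper $\Delta$-colouring. The basic tool is the greedy algorithm. If the vertices are ordered so that every vertex except possibly the last has a later neighbour, then each non-final vertex sees at most $\Delta-1$ earlier coloured neighbours when it is coloured. Such an ordering comes from a spanning tree rooted at a vertex $r$: list vertices in decreasing distance from $r$, with $r$ last.

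\textbf{Non-regular case.} Root the spanning tree at a vertex $r$ with $d(r)<\Delta$. Then $r$ also has a free colour at the end, so greedy succeeds. Hence from now on $G$ is $\Delta$-regular.

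\textbf{Regular, not $2$-connected.} Let $x$ be a cut vertex. For each component $C$ of $G-x$, the graph $G[C\cup\{x\}]$ has $x$ of degree less than $\Delta$. By the previous case each such graph is $\Delta$-colourable. Permuting colours in each piece so that $x$ receives the same colour everywhere, the colourings glue to a $\Delta$-colouring of $G$.

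\textbf{Regular and $2$-connected.} The plan is to find vertices $u,w,v$ with $uv,wv\in E(G)$, $uw\notin E(G)$, and $G-\{u,w\}$ connected. Colour $u$ and $w$ first, both with colour $1$. Then greedily colour $G-\{u,w\}$ along a spanning tree rooted at $v$, with $v$ last. Every vertex other than $v$ has a later uncoloured neighbour, so it finds a free colour. The vertex $v$ has two neighbours sharing colour $1$, so at most $\Delta-1$ colours are blocked at $v$.

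\textbf{Finding the triple (the main obstacle).} If $G$ is $3$-connected, then since $G$ is not complete there exist vertices at distance $2$. Take such a pair $u,w$ with a common neighbour $v$; removing $u,w$ keeps $G$ connected. If $G$ is $2$-connected but not $3$-connected, pick $v$ such that $G-v$ has a cut vertex, and so has at least two end-blocks. Because $G$ is $2$-connected, $v$ has a neighbour in the interior (non-cut part) of each end-block; choose $u$ and $w$ this way in two different end-blocks, so $uw\notin E(G)$. Removing $u$ and $w$ leaves each end-block connected to the rest of $G-v$. Since $d(v)=\Delta\ge 3$, the vertex $v$ keeps a neighbour outside $\{u,w\}$, so $G-\{u,w\}$ is connected. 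The step needing the most care is this block-structure argument, which certifies that $G-\{u,w\}$ is connected and that $u,w$ are non-adjacent.
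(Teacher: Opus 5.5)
Your proposal is correct. It is Lov\'asz's standard greedy-colouring proof, and each case (non-regular; regular with a cut vertex; $3$-connected; $2$-connected via two leaf blocks of $G-v$, whose interior vertices $u,w$ are non-adjacent non-cut vertices of $G-v$) is handled soundly. The only loose phrasing is that for $\Delta\le 1$ the exceptional graphs $K_1,K_2$ fall under the complete-graph clause, not the odd-cycle clause.

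The paper gives no proof of its own: it cites Brooks~\cite{brooks1941colouring} and uses the theorem only as a black box in Lemma~\ref{lem:Brooks application}, so there is no competing argument to compare.
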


The following lemma plays a key role in the proof of \Cref{thm: partial uppbd for hom K_1st}.
\begin{lemma}\label{lem:Brooks application}
    Let $1 \leq s \leq t$ and $r > s$ be integers with
    $(r-s)^2 \leq s$. Let $\varepsilon > 0$,
    and let $G$ be a $K_{1,s,t}$-free graph with a vertex partition
    $V(G) = X \cup Y$, such that
    $d_Y(x) \geq (\frac{s}{r} + \varepsilon) |Y|$ for all $x \in X$. Suppose also that
    $|Y| \gg s,t,r,1/\varepsilon$.
    Then $\chi(G[X]) \leq r-1$.
\end{lemma}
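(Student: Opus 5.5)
The plan is to bound $\chi(G[X])$ via Brooks' theorem (\Cref{thm:brooks}). It suffices to show two things: $G[X]$ contains no $K_r$, and $\Delta(G[X])\le r-1$. Indeed, if $\Delta(G[X])\le r-1$, then every connected component of $G[X]$ has chromatic number at most $r-1$, unless it is a $K_r$ or an odd cycle. A $K_r$ component is excluded by the first property. An odd cycle is only a problem when $r-1<3$, i.e.\ $r\le 3$; then $s=1$ with $r=2$ is forced (or the cycle is $3$-colourable anyway when $r=3$), and the case $r=2$ should be checked separately: there $\Delta\le 1$, so $G[X]$ is a matching and is $1$-colourable only if it has no edges. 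I will handle this small boundary case by hand, possibly by showing directly that no edge can exist when $r=2$, using the same counting as below.

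Both properties come from one extension principle. Suppose a vertex $v\in X$ and an $s$-set $S\subseteq N_X(v)$ have at least $t$ common neighbours in $Y$. Then $v$ is the centre of a $K_{1,s,t}$, with parts $S$ and $T\subseteq Y$. So it suffices to find $v$, $S$ and a set $Y'\subseteq Y$ with $|Y'|\ge \eta|Y|$ such that every $y\in Y'$ is adjacent to $v$ and to at least $s$ vertices of some fixed $r$-set of $N_X(v)$. Pigeonholing over the at most $\binom{r}{s}$ choices of $S$ then gives $\eta|Y|/\binom rs\ge t$ common neighbours, using $|Y|\gg s,t,r,1/\varepsilon$.

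For the clique case, suppose $K\subseteq X$ is a copy of $K_r$. The degree sum $\sum_{x\in K}d_Y(x)>(s+r\varepsilon)|Y|$. Since each $y\in Y$ contributes at most $r$ to this sum, at least $\varepsilon|Y|$ vertices $y$ are adjacent to at least $s+1$ vertices of $K$. Pigeonholing gives an $(s+1)$-subset of $K$ with at least $t$ common neighbours in $Y$. Any vertex of this subset serves as the centre, and the remaining $s$ vertices form $S$.

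For the degree case, suppose $v\in X$ has $r$ neighbours $u_1,\dots,u_r$ in $X$, and write $a=|N_Y(v)|/|Y|\ge s/r+\varepsilon$. Call $y\in N_Y(v)$ good if it is adjacent to at least $s$ of the $u_i$; assume fewer than $\eta|Y|$ vertices are good. Then
\[
\sum_i d_Y(u_i)\le \bigl(a(s-1)+(1-a)r+r\eta\bigr)|Y|,
\]
while the left-hand side is greater than $(s+r\varepsilon)|Y|$. So $a(r-s+1)<r-s+r(\eta-\varepsilon)$. Substituting $a\ge s/r+\varepsilon$ and taking $\eta\ll\varepsilon$, this gives $s(r-s+1)<r(r-s)$, i.e.\ $(r-s)^2>s$. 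This contradicts the hypothesis $(r-s)^2\le s$, so at least $\eta|Y|$ vertices are good, and the extension principle applies with centre $v$.

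The main obstacle is precisely this degree count. Its verification is the place where the exact hypothesis $(r-s)^2\le s$ and the $\varepsilon$-slack are needed to make the inequality strict, so I will carry out that algebra carefully, including the error term $r\eta$. Everything else is pigeonholing and a direct application of Brooks' theorem.
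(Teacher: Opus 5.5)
Your two structural estimates match the paper's. The degree-sum count shows $G[X]$ is $K_r$-free, and the count over $N_Y(v)$ shows $\Delta(G[X])\le r-1$; that second count is exactly where $(r-s)^2\le s$ is used. The paper phrases it as ``all but $O_{s,t,r}(1)$ vertices of $N_Y(v)$ see at most $s-1$ of the $u_i$'', which is equivalent to your $\eta|Y|$ threshold.

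\textbf{The gap is in the Brooks step for $r=3$.} There the target is $\chi(G[X])\le r-1=2$. An odd-cycle component is therefore \emph{not} harmless: $C_5$, for instance, is $K_3$-free with maximum degree $2$ but needs three colours. Your remark that ``the cycle is $3$-colourable anyway when $r=3$'' only yields $\chi\le 3$. Your claim that $r\le 3$ forces $(r,s)=(2,1)$ is also wrong. The conditions $r>s$ and $(r-s)^2\le s$ with $r=3$ force $s=2$, and this case is genuinely needed. In the proof of Theorem~\ref{thm: partial uppbd for hom K_1st} for $K_{1,2,2}$ one has $s=t=2$ and $j=0$, so both applications of the lemma have $r=3$. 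Thus $K_3$-freeness together with $\Delta(G[X])\le 2$ does not finish the proof, and an extra argument is required.

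\textbf{The fix.} The missing step is short, using the strong degree condition available in this case. When $(r,s)=(3,2)$, every $x\in X$ satisfies $d_Y(x)\ge(\frac23+\varepsilon)|Y|$. Hence any three vertices of $X$ have at least $3\cdot(\frac23+\varepsilon)|Y|-2|Y|=3\varepsilon|Y|\ge t$ common neighbours in $Y$. Consequently a path $u_1vu_2$ in $G[X]$ would give a $K_{1,2,t}$ with centre $v$, $S=\{u_1,u_2\}$ and $T\subseteq Y$. So $\Delta(G[X])\le 1$, $G[X]$ is a matching, and $\chi(G[X])\le 2$.

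For $r=2$ you need nothing extra: your clique argument with $r=2$ already shows that $G[X]$ has no edges. With the $r=3$ step added, your proof coincides with the paper's.
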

\begin{proof}
    We will apply Brooks' theorem. First we show that $G[X]$ is $K_r$-free. Indeed, suppose by contradiction that
    $\{x_1,\dots,x_r\} \subseteq X$ is a clique. We have $\sum_{i=1}^r d_Y(x_i) \geq (s+r\varepsilon)|Y|$.
    Let $Y'$ be the set of $y \in Y$ which have at least $s+1$ neighbors in
    $\{x_1,\dots,x_r\}$. Then
    $$
    \sum_{i=1}^r d_Y(x_i) \leq
    r|Y'| + s(|Y|-|Y'|) = s|Y| + (r-s) |Y'|.
    $$
    Combining this with the lower bound on $\sum_{i=1}^r d_Y(x_i)$, we get that
    $|Y'| \geq \frac{r\varepsilon}{r-s}|Y|$.
    By pigeonholing over the choice of $s+1$ neighbors of a vertex $y \in Y'$ in $\{x_1,\dots,x_r\}$,
    we see that there are
    $S \subseteq \{x_1,\dots,x_r\}$ of size $s+1$ and
    $T \subseteq Y$ of size $t$ such that $(S,T)$ is a complete bipartite graph. Since $S$ is a clique, we get a copy of $K_{1,s,t}$, a contradiction.

    Next, we show that $\Delta(G[X]) \leq r-1$. Indeed, suppose by contradiction that there are $x,x_1,\dots,x_r \in X$ with $xx_i \in E(G)$ for all $i \in [r]$. Since $G$ is $K_{1,s,t}$-free, all but $O_{s,t,r}(1)$ vertices in $N_Y(x)$ have at most $s-1$ neighbors in $\{x_1,\dots,x_r\}$; otherwise, by pigeonholing, we could find $t$ vertices in $Y$ with the same $s$ neighbors in
    $\{x_1,\dots,x_r\}$, giving a $K_{1,s,t}$.
    We conclude that
    \begin{align*}
    \sum_{i=1}^r d_Y(x_i) &\leq
    (s-1) d_Y(x) + r (|Y| - d_Y(x)) + O_{s,t,r}(1)
    \\ &=
    r|Y| - (r-s+1) d_Y(x) + O_{s,t,r}(1) <
    \left( r - \frac{s(r-s+1)}{r} \right)|Y|,
    \end{align*}
    using that
    $d_Y(x) \geq (\frac{s}{r} + \varepsilon) |Y|$.
    On the other hand,
    $\sum_{i=1}^r d_Y(x_i) > s|Y|$. Combining these two inequalities, we get that
    $r - \frac{s(r-s+1)}{r} > s$, which rearranges to $(r-s)^2 > s$, contradicting the assumption of the lemma.

    If $r=2$ or $r \geq 4$ then
    $\omega(G[X]),\Delta(G[X]) \leq r-1$ immediately gives $\chi(G[X]) \leq r-1$, by Theorem \ref{thm:brooks}.
    If $r = 3$, then we need to verify that $G[X]$ has no odd cycles. We show that in fact, $\Delta(G[X]) \leq 1$. Note that the condition $(3-s)^2 = (r-s)^2 \leq s$ implies that $s = 2$. Hence, $d_Y(x) \geq (\frac{2}{3} + \varepsilon)|Y|$ for all $x \in X$. Hence, any copy of $K_{1,2}$ inside $X$ gives a copy of $K_{1,2,t}$, since the three vertices of the $K_{1,2}$-copy have at least $\varepsilon |Y|$ common neighbors in $Y$. This contradicts $K_{1,s,t}$-freeness.
\end{proof}

\begin{proof}[Proof of \Cref{thm: partial uppbd for hom K_1st}]
    For each pair of integers $s$ and $t$ satisfying the conditions in \Cref{thm: partial uppbd for hom K_1st}, let $G$ be a $K_{1,s,t}$-free graph with $n$ vertices and $\delta(G)\ge \left( \frac{s}{1+s+t}+\eps \right) n,$
    where $0< \eps < \eps_0(s,t)$ is sufficiently small and $n\ge n_0(\varepsilon)$ is sufficiently large.
    It suffices to show that $\chi(G)\le s+t$, because this would imply that $G$ is homomorphic to $K_{s+t}$, which is $K_{1,s,t}$-free, \nolinebreak as \nolinebreak required.

    Since $3s\ge 2+2t$, we have
        $\delta(G)\ge
        \left(\frac{s}{1+s+t}+\eps \right)n
        \ge \left( \frac{2}{5}+\eps \right)n.$
    Note that for $r=3$ we have
    $\frac{3r-7}{3r-4} = \frac{2}{5}$.
    Thus, assuming $n\ge n_0(\varepsilon,s,t)$, we may apply \Cref{lem:AES-almost-partite} with $r=3$ and parameter
    $\frac{\varepsilon}{2}$ to obtain a partition $V(G) = A\cup B$ such that
        $\Delta(G[A]), \Delta(G[B])\le \varepsilon n/2.$
    This implies that, for any $(a,b)\in A\times B$,
    \begin{align*}
        d_B(a),d_A(b) \ge
        \left( \frac{s}{1+s+t}+\frac{\eps}{2} \right)n.
    \end{align*}
    Assume by symmetry that $|A|\le |B|$, we then have
    \begin{align}\label{ineq: rough ineq between A and B}
        \left( \frac{s}{1+s+t}+\frac{\eps}{2} \right )n\le
        |A|\le \frac{n}{2}\le |B|
        \le
        \left( \frac{t+1}{1+s+t}-\frac{\eps}{2} \right)n.
    \end{align}
    By (\ref{ineq: rough ineq between A and B}), there exists a (unique) integer $j$ with
    $0\le j\le \frac{t-s}{2}$
    such that
    \begin{align*}
        \frac{s+j}{1+s+t}n <
        |A|\le \frac{s+j+1}{1+s+t}n
        \ \text{    and    } \
        \frac{t-j}{1+s+t}n \le
        |B|< \frac{t-j+1}{1+s+t}n.
    \end{align*}
    Then for each $(a,b)\in A\times B$, we have
    \begin{align}\label{ineq:degrees in A and B}
        \frac{d_B(a)}{|B|}\ge
        \frac{s}{t-j+1} + \frac{\eps}{2}
        \ \text{ and } \
        \frac{d_A(b)}{|A|}\ge
        \frac{s}{s+j+1} + \frac{\eps}{2}.
    \end{align}
    We now apply Lemma \ref{lem:Brooks application} twice: once with $(X,Y) := (B,A)$ and once with $(X,Y) := (A,B)$. First, apply the lemma with
    $(X,Y) := (B,A)$ and $r := t-j+1 > s$. Note that $(r-s)^2 \leq (t+1-s)^2 \leq s$ (by the assumption of the theorem), so the conditions of Lemma \ref{lem:Brooks application} are satisfied. The lemma gives $\chi(G[B]) \leq r-1 = t-j$.

    Similarly, apply Lemma \ref{lem:Brooks application} with $(X,Y) = (A,B)$ and $r := s+j+1 > s$. Note that $(r- \nolinebreak s)^2 = (j+1)^2 \leq (t-s+1)^2 \leq s$, again using the assumption of the theorem. Lemma \ref{lem:Brooks application} gives $\chi(G[A]) \leq r-1 = s+j$.

    Combining the above, we get that $\chi(G) \leq \chi(G[A]) + \chi(G[B]) \leq (t-j) + (s+j) = s+t$, as required. This completes the proof.
\end{proof}

\section{Accumulation points of the homomorphism threshold}
\label{sec:hom-accumulation}

In this section we prove \Cref{thm: accumulation pts for hom}.  The case
$r=3$ follows directly from \Cref{thm: general lowbd for hom chi=3 and exact values}.
Indeed, for every $s\ge 2$ that theorem gives
    $\delta_{\mathrm{hom}}(K_{1,s,s})=\frac{s}{2s+1},$
and these values converge to $1/2$ as $s\to\infty$.

It remains to consider $r\ge 4$.  For integers $r\ge 4$ and $s\ge 2$,
let
\[
    H_{r,s}:=K_{1,s-1,s,\ldots,s},
\]
where the part of size $s$ occurs $r-2$ times.  We prove the following.

\begin{lemma}\label{lem:hom approaching to 1-1/r}
    For all integers $r\ge 4$ and $s\ge 2$,
        $\frac{r-2}{r-1}\cdot\frac{s-1}{s}
        \le \delta_{\mathrm{hom}}(H_{r,s})
        <\frac{r-2}{r-1}.$
\end{lemma}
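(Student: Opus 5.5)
The plan proves the two inequalities separately. The upper bound is structural, using \Cref{lem:AES-almost-partite}. The lower bound needs a new gadget, and that is where I expect the real difficulty.

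\textbf{Upper bound.} Write $H=H_{r,s}=K_{1,s_1,\dots,s_{r-1}}$ with $s_1=s-1$ and $s_2=\dots=s_{r-1}=s$. Fix a small $\eta=\eta(r,s)>0$, put $\gamma=\frac{r-2}{r-1}-\eta$, and let $G$ be $H$-free with $\delta(G)\ge\gamma n$. Since $\frac{3r-7}{3r-4}<\gamma$, \Cref{lem:AES-almost-partite} (with some $\eps\ll\eta$) gives a partition $V_1\cup\dots\cup V_{r-1}$ with $\Delta(G[V_i])\le\eps n$.

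First I control the part sizes and the missing cross edges. A vertex $v\in V_i$ has $d(v)\le n-|V_i|+\eps n$, so $|V_i|\le(\frac1{r-1}+\eta+\eps)n$. Summing, $|V_i|\ge(\frac1{r-1}-(r-2)(\eta+\eps))n$. Hence every vertex of $V_i$ has at most $O_r(\eta+\eps)n$ non-neighbours in $\bigcup_{j\ne i}V_j$.

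Next I claim $\Delta(G[V_i])\le s-2$. Suppose $x\in V_i$ has internal neighbours $y_1,\dots,y_{s-1}$. The $s$ vertices $x,y_1,\dots,y_{s-1}$ then have a common neighbourhood meeting each $V_j$ ($j\ne i$) in a set $U_j$ of size at least $(\frac1{r-1}-O_{r,s}(\eta))n$. Between any two of these sets only $O_{r,s}(\eta)n^2$ pairs are missing. Counting, the $(r-2)$-partite graph on the $U_j$ has $\Omega(n^{r-2})$ copies of $K_{r-2}$. So by \Cref{fact: Ksssss-free graph has small number} (equivalently \Cref{thm:erdos-rpartite-turan}) it contains $K_{s,\dots,s}$ with $r-2$ parts. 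Together with the star $x$, $\{y_1,\dots,y_{s-1}\}$, this is a copy of $H$, a contradiction.

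Therefore each $G[V_i]$ has $\chi\le s-1$, and $\chi(G)\le(r-1)(s-1)$. Since $(r-1)(s-1)<v(H)=(r-1)s$, the clique $K_{(r-1)(s-1)}$ is $H$-free. So $G\to K_{(r-1)(s-1)}$, which gives $\delta_{\mathrm{hom}}(H)\le\gamma<\frac{r-2}{r-1}$.

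\textbf{Lower bound.} I follow the ``coding gadget plus reservoir'' scheme of \Cref{thm: general lowbd for hom K_1st}, with the random clique pairing replaced by the deterministic grid described in the introduction. The gadget is built from $r-1$ families of clusters indexed by a grid. Clusters in a common row or column are joined by perfect matchings; all other pairs are joined by complements of matchings.

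The gadget should satisfy two properties.
\begin{enumerate}[(a)]
\item Every bounded homomorphism repeats two full layers. The matching pairs and the co-matching pairs then jointly witness every adjacency among the repeated images, so the image contains a large clique, in particular $K_{v(H)}\supseteq H$.
\item The row--column incompatibility prevents a copy of the star $K_{1,s-1}$ from extending through common neighbourhoods into $r-2$ parts of size $s$. This is what makes the gadget $H$-free.
\end{enumerate}
The gadget is then attached to $r-1$ large reservoirs, of relative sizes tuned to the $(s-1)/s$ deficiency. The attachments are balanced so that every vertex sees a $\frac{r-2}{r-1}\cdot\frac{s-1}{s}-o(1)$ fraction of all vertices. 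They are deliberately incomplete, as in rule (2) of the $K_{1,s,t}$ construction: each reservoir vertex sees exactly $s-1$ vertices of each relevant clique, so no neighbourhood contains the complete $(r-1)$-partite pattern $K_{s-1,s,\dots,s}$.

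\textbf{Expected obstacle.} The hardest step is checking $H$-freeness of the combined graph via a case analysis on where the apex vertex of $H$ lies. This mirrors the claim in the proof of \Cref{thm: general lowbd for hom K_1st}: copies of $H$ using reservoir vertices must be pushed back into the gadget, and there the grid geometry excludes them. A simpler route, joining a complete $(r-3)$-partite reservoir to the $r=3$ gadget for $K_{1,s-1,s}$, fails here. It does give the right degree asymptotics, but its bounded image only contains a clique of size $2s+r-3<v(H)$. This failure is exactly why the grid gadget is needed.
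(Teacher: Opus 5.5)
Your upper bound is correct and follows the paper's route: \Cref{lem:AES-almost-partite}, near-complete cross degrees, $K_{1,s-1}$-freeness of each $G[V_i]$, and hence $G\to K_{(r-1)(s-1)}$. The only difference is cosmetic. The paper picks the $r-2$ sets of size $s$ greedily in iterated common neighbourhoods, while you extract them via \Cref{fact: Ksssss-free graph has small number}; both work. Your property (a) for the lower bound is also right: it is exactly the paper's pigeonhole over layers.

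The lower bound, however, has a genuine gap. The one thing that needs proving, namely that the gadget is $H_{r,s}$-free, is only asserted in (b), and the construction itself is never pinned down. First, you do not fix the grid. It must be exactly $[r-1]\times[s]$, because the freeness proof hinges on the grid having exactly $v(H_{r,s})=(r-1)s$ points. Second, the reservoir layer is unspecified (sizes, attachments), so neither its degree claim nor the $H$-freeness of the combined graph can be checked. Third, the attachment rule you import from the $K_{1,s,t}$ construction has no meaning here. The grid gadget contains no cliques: clusters are independent, and a layer $\{x_j\}$ induces the rook's graph on $[r-1]\times[s]$.

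The observation you are missing is that no reservoirs are needed. Take the blowup of $K_{qs}$ on $[q]\times[s]$, with $q=r-1$, matchings on same-row or same-column pairs and co-matchings otherwise. Every vertex has $s+q-2$ matching neighbours and $(q-1)(s-1)(m-1)$ co-matching neighbours. So this graph $\Gamma_m$ is regular with degree ratio $\frac{(q-1)(s-1)}{qs}-O(1/m)$, which is already the target $\frac{r-2}{r-1}\cdot\frac{s-1}{s}$.

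What remains is a counting argument for $H$-freeness, which you have not supplied. Suppose two copy-vertices share a grid point $x$ in row $i$. They are non-adjacent, so they lie in one part $P$. No copy-vertex outside $P$ can sit on a point matched to $x$, since it would have to match both of their (distinct) layer indices. Hence all copy-vertices in row $i$ lie in $P$, and so every row carries at most $s$ copy-vertices. Since $v(H_{r,s})=qs$, each row carries exactly $s$.

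Now suppose some point $(i,a)$ were repeated. Then row $i$ is a whole part of size $s$. Every other row must avoid column $a$, so it also has a repeated point and is also a part of size $s$. That gives $q$ parts of size $s$, whereas $H_{r,s}$ has only $q-1$. Hence the projection to the grid is a bijection.

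Finally, the apex is adjacent to the $s+q-2$ copy-vertices on its row and column. All of these sit on points matched to the apex's point, so they share its layer index. A row point and a column point are co-matched, so every such row vertex is non-adjacent to every such column vertex. This forces all $s+q-2>s$ of them into one part, using $q\ge 3$, a contradiction.
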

\noindent
\Cref{lem:hom approaching to 1-1/r} immediately yields
\Cref{thm: accumulation pts for hom} (for $r \geq 4$), because $\delta_{\mathrm{hom}}(H_{r,s})$ converges to $\frac{r-2}{r-1}$ from below while never equaling $\frac{r-2}{r-1}$, implying that $\frac{r-2}{r-1}$ is an accumulation point.

\subsection{The upper bound in \Cref{lem:hom approaching to 1-1/r}}

    Put $q:=r-1$.
    Our goal is to show that there exists
    $\eta > 0$ depending only on $r,s$ such that
    $\delta_{\mathrm{hom}}(H_{r,s}) \leq
    \frac{r-2}{r-1}-\eta$. We will choose $\eta$ implicitly in the course of the proof.
    Let $G$ be an $n$-vertex graph with
    $\delta(G)\ge (\frac{q-1}{q}-\eta)n$ and with no copies of
    $H_{r,s} = K_{1,s-1,s,\dots,s}$. We assume throughout that $n$ is large enough where needed.
    Our goal is to show that $G$ is homomorphic to an $H_{r,s}$-free graph on $O(1)$ vertices.

    Note that
    $
    \frac{q-1}{q} - \eta > \frac{3q-4}{3q-1}+\eta
    $
    (provided that $\eta < \frac{1}{2q(3q-1)}$). Hence,
    $\delta(G) \geq
    ( \frac{3q-4}{3q-1} + \eta )n$.
    Therefore, by \Cref{lem:AES-almost-partite} (with $\varepsilon := \eta$), there is a partition
        $V(G)=V_1\cup\cdots\cup V_q$
    such that
         $\Delta(G[V_i])\le \eta n$
for every $i\in[q]$.
    We now prove the following simple claim:
    \begin{claim}\label{claim:accumulation point upper bound, sizes}
    The following hold for every $i\in[q]$:
    \begin{enumerate}
        \item 
        $|V_i|
        \le (\frac1q+2\eta)n.$
        \item for every $v\in V(G)\setminus V_i$, 
        $d_G(v,V_i)
        \ge \big(\frac1q-2(q-1)\eta\big)n.$
    \end{enumerate}
    \end{claim}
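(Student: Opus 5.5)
For both items I will count edges between parts and use only three inputs: the minimum degree $\delta(G)\ge(\frac{q-1}{q}-\eta)n$, the bound $\Delta(G[V_i])\le\eta n$ from \Cref{lem:AES-almost-partite}, and $\sum_i|V_i|=n$.

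\textbf{Item 1: upper bound on part sizes.} Fix $i$ and any vertex $u\in V_j$ with $j\neq i$. First I would rule out $q=1$, which does not occur since $q=r-1\ge 3$. Second, I would ensure $V_j$ is nonempty. If some $V_j$ were empty, then every vertex would have at least $(\frac{q-1}{q}-\eta)n-\eta n$ neighbours in its own complement. Summed over all vertices, this is incompatible with only $q-1$ nonempty parts; I expect a short averaging argument to settle this. A simpler route avoids the issue entirely. Take any $v\in V_i$ itself, which exists whenever $V_i\ne\emptyset$ (and the bound is trivial if $V_i=\emptyset$). Then
\[
\Bigl(\tfrac{q-1}{q}-\eta\Bigr)n\le d_G(v)\le \eta n + (n-|V_i|),
\]
which gives only $|V_i|\le(\frac1q+2\eta)n$ in one step. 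This is exactly the claimed bound, so I will use this route.

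\textbf{Item 2: lower bound on degrees into a part.} Fix $v\in V_j$ with $j\ne i$. Its neighbours lie in $V_j$, where there are at most $\eta n$ of them, together with $V_i$ and the remaining $q-2$ parts $V_k$ ($k\ne i,j$). By item 1, each of those $q-2$ parts has size at most $(\frac1q+2\eta)n$. Therefore
\[
d_G(v,V_i)\ge \Bigl(\tfrac{q-1}{q}-\eta\Bigr)n-\eta n-(q-2)\Bigl(\tfrac1q+2\eta\Bigr)n=\Bigl(\tfrac1q-(2q-2)\eta\Bigr)n,
\]
which is precisely $\bigl(\frac1q-2(q-1)\eta\bigr)n$.

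The argument is routine bookkeeping. The only point needing care is the constant: it has to come out exactly $2(q-1)$, which it does because the $q-2$ parts each contribute $2\eta$ and there are two further single-$\eta$ losses (from $V_j$ and from the minimum degree).
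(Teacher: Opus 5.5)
Your proof is correct and is exactly the paper's argument. Item 1 follows from $d_G(v)\le \eta n+(n-|V_i|)$ at a vertex $v\in V_i$, and Item 2 follows by subtracting the at most $\eta n$ neighbours of $v$ in its own part together with the sizes of the $q-2$ remaining parts, bounded via Item 1. You are right to abandon the opening detour about nonempty $V_j$ in favour of this route, and it should simply be deleted, since the averaging step there is never carried out.
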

    \begin{poc}
    First we prove Item 1. If $V_i = \emptyset$ then there is nothing to prove. Otherwise, taking any $v \in V_i$, we have
        \[
        \left( \frac{q-1}{q} - \eta \right)n \leq \delta(G)\le d_G(v)\le n-|V_i|+\eta n,
        \]
    which rearranges to the required bound.
    Next, let $v\in V_h$ with
    $h \in [q] \setminus \{i\}$.
    Note that $d_G(v,V_h) \leq \eta n$.
    Now, using the bound in Item 1 for the $q-2$ parts $V_j$ with $j \neq i,h$, we obtain
    \begin{align*}
        d_G(v,V_i)
        \ge \delta(G)-d_G(v,V_h)
             -\sum_{j\in[q]\setminus\{i,h\}}|V_j|
        &\ge
        \left( \frac{q-1}{q} - \eta \right)n - \eta n -
        (q-2) \cdot \left( \frac1q+2\eta\right)n
        \\ &=
        \left(\frac1q-2(q-1)\eta\right)n.\qquad \qedhere
    \end{align*}
    \end{poc}

    By combining both Items of Claim \ref{claim:accumulation point upper bound, sizes}, we see that for every $i \in [q]$ and $v \in V(G) \setminus V_i$, it holds that
    $$
    \frac{d_G(v,V_i)}{|V_i|} \geq
        \frac{1-2q(q-1)\eta}{1+2q\eta}
        \ge 1-2q^2\eta.
    $$
    Thus, setting
        $\alpha:=2q^2\eta,$
    we record the fact that
    \begin{equation}\label{eq:cross-density-upper-Hrs}
        d_G(v,V_i) \geq (1-\alpha)|V_i|
        \qquad
        \text{for all }i \in[q]\text{ and }v\in V(G) \setminus V_i.
    \end{equation}

    We claim that every $G[V_i]$ is $K_{1,s-1}$-free.  Suppose otherwise;
    by symmetry, let $T\subseteq G[V_q]$ be a copy of $K_{1,s-1}$, so
    $|V(T)|=s$.  We shall choose sets        $U_j\subseteq V_j,$ $|U_j|=s$, $j\in[q-1]$,
    such that every two of the sets
        $V(T),U_1,\ldots,U_{q-1}$
    are completely joined in $G$. This would contradict the assumption that $G$ has no copies of $H_{r,s} = K_{1,s-1,s,\dots,s}$.

    Let $1 \leq j \leq q-1$ and suppose that $U_1,\ldots,U_{j-1}$ have already been chosen. Put
    \[
        S_j:=V(T)\cup U_1\cup\cdots\cup U_{j-1}.
    \]
    Every vertex of $S_j$ lies outside $V_j$, and $|S_j|\le qs$.
    Also, by choosing $\eta$ to satisfy
    $\eta \leq \frac{1}{4sq^3}$, we make sure that $qs\alpha \leq \frac{1}{2}$.
    Hence, by \eqref{eq:cross-density-upper-Hrs} and the union bound,
    \[
        \left|V_j\cap\bigcap_{x\in S_j}N_G(x)\right|
        \ge (1-|S_j|\alpha)|V_j|
        \ge \frac{|V_j|}{2}.
    \]
    Item 2 of Claim \ref{claim:accumulation point upper bound, sizes} in particular implies that $|V_j|=\Omega(n)$.
    Thus, for all sufficiently large $n$,
    we may choose $s$ vertices in $V_j$ which are adjacent to all vertices in $S_j$; choose $U_j$ to be the set of these $s$ vertices.  Continuing in this way
    gives the desired sets $U_1,\ldots,U_{q-1}$.
    As explained above, this gives a copy of $H_{r,s}$ in $G$, a contradiction. This proves our claim that $G[V_i]$ is $K_{1,s-1}$-free for every
    $i\in[q]$.

    Thus, for all $i \in [q]$ we have
    $\Delta(G[V_i])\le s-2$ and hence
    $\chi(G[V_i])\le s-1$ (using greedy coloring).
    Therefore, $\chi(G)\le q(s-1)=(r-1)(s-1),$
    so $G$ is homomorphic to
    $K_{(r-1)(s-1)}$.
    But this clique
    is $H_{r,s}$-free, since
    $|V(H_{r,s})|=(r-1)s>(r-1)(s-1).$
    So $G$ is homomorphic to an $H_{r,s}$-free graph on $O(1)$ vertices.
    We have therefore proved the desired upper bound
    \[
        \delta_{\mathrm{hom}}(H_{r,s})
        \le \frac{r-2}{r-1}-\eta
        <\frac{r-2}{r-1}.
    \]

\subsection{The lower bound in \Cref{lem:hom approaching to 1-1/r}}

We use the following special blow-up construction.

\begin{definition}\label{def:special blowup of multipartitecliques}
    Let $F$ be a graph and let $c:E(F)\to[2]$ be a two-coloring.  For an
    integer $m\ge 1$, the \emph{special $m$-blow-up} $F^{[m]}$ is defined
    as follows.  Each vertex $x\in V(F)$ is replaced by an independent set
        $U_x:=\{x_1,\ldots,x_m\}.$
    
    If $xy\in E(F)$ and $c(xy)=1$, then $F^{[m]}[U_x,U_y]$ is the matching
        $\{x_jy_j:j\in[m]\}.$
        
    If $xy\in E(F)$ and $c(xy)=2$, then $F^{[m]}[U_x,U_y]$ is the
    complement $\{x_jy_\ell:j,\ell\in[m],\ j\ne\ell\}.$
\end{definition}

\begin{proof}[Proof of the lower bound in
    \Cref{lem:hom approaching to 1-1/r}]
    Our goal is to construct, for any given $\varepsilon > 0$ and $C$, an $H_{r,s}$-free graph $G$ with
    $\delta(G) \geq
    (\frac{r-2}{r-1} \cdot \frac{s-1}{s} -\varepsilon)|V(G)|$ such that every homomorphic image of $G$ on at most $C$ vertices contains a copy of $H_{r,s}$.

    Put $q:=r-1$, so $q\ge 3$, and identify the vertex set of
    $K_{qs}$ with the grid
    \[
        [q]\times[s].
    \]
    We call $\{i\}\times[s]$ the $i$-th row.  Color the edges of $K_{qs}$ such that grid edges receive color 1 and all other edges receive color 2; i.e.,
    color the edge between the grid points $(i,a)$ and $(j,b)$ by
    \begin{equation}\label{eq:grid-colouring}
        c\bigl((i,a)(j,b)\bigr)
        :=
        \begin{cases}
            1, & \text{if }i=j\text{ or }a=b,\\
            2, & \text{if }i\ne j\text{ and }a\ne b.
        \end{cases}
    \end{equation}
    Let $\Gamma_m$ be the special $m$-blow-up of this colored copy of
    $K_{qs}$, as in Definition \ref{def:special blowup of multipartitecliques} (see \Cref{fig:lower-bound-r4-s3}).

    Every vertex of $K_{qs}$ has
    \[
        d_1=(s-1)+(q-1)=s+q-2
    \]
    incident edges of color $1$, and
    \[
        d_2=(q-1)(s-1)
    \]
    incident edges of color $2$.  Hence, $\Gamma_m$ is regular and
     \[\delta(\Gamma_m)=d_1+d_2(m-1)=(q-1)(s-1)m+
          \bigl(s+q-2-(q-1)(s-1)\bigr).
    \]
    In particular,
    \begin{equation}\label{eq:degree-ratio-Gamma-m}
        \frac{\delta(\Gamma_m)}{|V(\Gamma_m)|}
        \ge \frac{(q-1)(s-1)}{qs}
        -\frac{\bigl|s+q-2-(q-1)(s-1)\bigr|}{qsm}.
    \end{equation}
    Since $q=r-1$, the limiting degree ratio in
    \eqref{eq:degree-ratio-Gamma-m} as
    $m \rightarrow \infty$ is
    \[
        \frac{r-2}{r-1}\cdot\frac{s-1}{s}.
    \]

       \begin{figure}[h]
\centering
\begin{tikzpicture}[
  font=\small,
  line cap=round,
  line join=round,
  panel/.style={draw=black!70, thick, rounded corners=4pt},
  cluster/.style={draw=black!80, thick, rounded corners=2pt,
                  minimum width=1.18cm, minimum height=0.56cm,
                  fill=gray!2, inner sep=1pt},
  focus/.style={cluster, very thick, fill=yellow!18},
  vertex/.style={draw, circle, thick, minimum size=5.1mm,
                 inner sep=0pt, font=\scriptsize},
  cone/.style={red!70!black, very thick, shorten <=2pt, shorten >=2pt},
  ctwo/.style={blue!70!black, dashed, semithick,
               shorten <=2pt, shorten >=2pt},
  match/.style={red!70!black, semithick,
                shorten <=1pt, shorten >=1pt},
  comp/.style={blue!70!black, dashed, semithick,
               shorten <=1pt, shorten >=1pt},
  faint/.style={text=gray!65}
]

\draw[panel] (-7.55,-2.95) rectangle (-0.85,3.10);
\node[font=\small\bfseries] at (-4.20,2.82)
  {the clusters of $\Gamma_m$};

\node[focus]   (U11) at (-5.95, 1.60) {$U_{1,1}$};
\node[cluster] (U12) at (-4.20, 1.60) {$U_{1,2}$};
\node[cluster] (U13) at (-2.45, 1.60) {$U_{1,3}$};

\node[cluster] (U21) at (-5.95, 0.10) {$U_{2,1}$};
\node[cluster] (U22) at (-4.20, 0.10) {$U_{2,2}$};
\node[cluster] (U23) at (-2.45, 0.10) {$U_{2,3}$};

\node[cluster] (U31) at (-5.95,-1.40) {$U_{3,1}$};
\node[cluster] (U32) at (-4.20,-1.40) {$U_{3,2}$};
\node[cluster] (U33) at (-2.45,-1.40) {$U_{3,3}$};

\draw[cone] (U11.east) -- (U12.west);
\draw[cone] (U11.north east) to[out=35,in=145] (U13.north west);
\draw[cone] (U11.south) -- (U21.north);
\draw[cone] (U11.south west) to[out=-125,in=125] (U31.north west);

\draw[ctwo] (U11.south east) -- (U22.north west);
\draw[ctwo] (U11.east) to[out=-10,in=155] (U23.north west);
\draw[ctwo] (U11.south east) to[out=-70,in=165] (U32.west);
\draw[ctwo] (U11.south east) to[out=-45,in=150] (U33.north west);

\node[font=\scriptsize] at (-4.20,-2.30)
  {$U_{i,a}=\{(i,a)_1,\ldots,(i,a)_m\}$};

\draw[panel] (0.85,0.62) rectangle (7.55,3.10);
\node[font=\small\bfseries] at (4.20,2.80)
  {matching};
\node[font=\scriptsize] at (4.20,2.52)
  {same row or same column};

\draw[panel] (1.18,0.92) rectangle (3.88,2.42);
\draw[panel] (4.52,0.92) rectangle (7.22,2.42);
\node[font=\scriptsize] at (2.53,1.10) {$U_{1,1}$};
\node[font=\scriptsize] at (5.87,1.10) {$U_{1,2}$};

\node[vertex] (A1) at (1.72,1.98) {$1$};
\node[vertex] (A2) at (2.53,1.67) {$2$};
\node[faint]        at (3.02,1.43) {$\cdots$};
\node[vertex] (Am) at (3.20,1.24) {$m$};

\node[vertex] (B1) at (5.06,1.98) {$1$};
\node[vertex] (B2) at (5.87,1.67) {$2$};
\node[faint]        at (6.36,1.43) {$\cdots$};
\node[vertex] (Bm) at (6.54,1.24) {$m$};

\draw[match] (A1) -- (B1);
\draw[match] (A2) -- (B2);
\draw[match] (Am) -- (Bm);

\draw[panel] (0.85,-2.95) rectangle (7.55,0.20);
\node[font=\small\bfseries] at (4.20,-0.10)
  {complement of a matching};
\node[font=\scriptsize] at (4.20,-0.38)
  {different row and different column};

\draw[panel] (1.18,-2.42) rectangle (3.88,-0.70);
\draw[panel] (4.52,-2.42) rectangle (7.22,-0.70);
\node[font=\scriptsize] at (2.53,-2.18) {$U_{1,1}$};
\node[font=\scriptsize] at (5.87,-2.18) {$U_{2,2}$};

\node[vertex] (C1) at (1.72,-1.05) {$1$};
\node[vertex] (C2) at (2.53,-1.40) {$2$};
\node[faint]        at (3.02,-1.66) {$\cdots$};
\node[vertex] (Cm) at (3.20,-1.90) {$m$};

\node[vertex] (D1) at (5.06,-1.05) {$1$};
\node[vertex] (D2) at (5.87,-1.40) {$2$};
\node[faint]        at (6.36,-1.66) {$\cdots$};
\node[vertex] (Dm) at (6.54,-1.90) {$m$};

\draw[comp] (C1) -- (D2);
\draw[comp] (C1) -- (Dm);
\draw[comp] (C2) -- (D1);
\draw[comp] (C2) -- (Dm);
\draw[comp] (Cm) -- (D1);
\draw[comp] (Cm) -- (D2);

\end{tikzpicture}
\caption{The lower-bound construction for $r=4$ and $s=3$. Here $q=r-1=3$, so the base graph is the colored copy of $K_9$ on the grid $[q]\times[s]$. }
\label{fig:lower-bound-r4-s3}
\end{figure}

    \noindent
    The following are the two key properties of the construction $\Gamma_m$.
    \begin{claim}\label{claim:Gamma-m-H-free}
        For every $m\ge 1$, the graph $\Gamma_m$ is
        $H_{r,s}$-free.
    \end{claim}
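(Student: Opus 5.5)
The plan is to study a hypothetical copy of $H_{r,s}=K_{1,s-1,s,\dots,s}$ in $\Gamma_m$ through the \emph{cluster map}, which sends each vertex $x_j\in U_x$ to its grid point $x\in[q]\times[s]$, together with its \emph{index} $j$. Two facts drive everything, and both follow directly from Definition~\ref{def:special blowup of multipartitecliques}. First, each $U_x$ is independent, so vertices of the copy lying in a common cluster belong to a common part of $H_{r,s}$. Consequently the $r$ parts of $H_{r,s}$ (the apex, the part of size $s-1$, and the $q-1$ parts of size $s$) occupy pairwise disjoint sets of clusters. Second, if $x_jy_\ell$ is an edge of the copy, then $j=\ell$ when $c(xy)=1$, and $j\neq\ell$ when $c(xy)=2$. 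Note also that $|V(H_{r,s})|=qs=|V(K_{qs})|$, so there is no spare room in the base graph.

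\textbf{Step 1: pairs within a cluster.} Suppose two vertices $x_j,x_\ell$ with $j\ne\ell$ lie in one cluster. Then a common neighbour cannot lie in a cluster $y$ with $c(xy)=1$, since its index would have to equal both $j$ and $\ell$. Hence every vertex of the copy outside the part containing $x_j,x_\ell$ lies in a cluster that is in a different row and a different column from $x$.

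\textbf{Step 2: the non-injective case.} Suppose the cluster map is not injective. Using Step~1, I would show that some part then uses clusters whose rows and columns together exclude too many grid points for the remaining parts. The aim is to count: the other parts must fit into the at most $(q-1)(s-1)$ clusters avoiding the row and column of $x$, while they need many more vertices in pairwise distinct cluster-sets. Each further repeated cluster removes another row and column, so I would iterate this observation. I would close with a counting inequality comparing the number of vertices still to be placed with the number of clusters still available.

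\textbf{Step 3: the injective case.} If the cluster map is injective, it is a bijection onto $[q]\times[s]$. Consider the auxiliary graph on the copy whose edges are the pairs lying in different parts of $H_{r,s}$ with colour-$1$ clusters, i.e.\ clusters in the same row or the same column. By the second fact, the index is constant on each component of this graph. By the first fact, every pair lying in different parts with colour-$2$ clusters must receive distinct indices. I then want a colour-$2$ edge of the copy whose two ends lie in the same component, which is a contradiction.

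To find this edge, I would use the apex. Let it sit at cluster $x$. Every other vertex of the copy is adjacent to the apex. So the $s+q-2$ vertices on the row and column of $x$ all share the apex's index $j$. The remaining $(q-1)(s-1)$ vertices all have index different from $j$. Now take a vertex $u$ on the row of $x$ and a vertex $w$ on the column of $x$, say $u$ at $(i,b)$ and $w$ at $(i',a)$ with $x=(i,a)$, $i'\ne i$ and $b\ne a$. Their clusters have colour~$2$, yet both have index $j$, so $u$ and $w$ must lie in the same part of $H_{r,s}$. Iterating this shows that all $s+q-2$ of these vertices lie in one part, which is impossible: every part other than the apex has size at most $s<s+q-2$ when $q\ge3$. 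There is one degenerate situation, where the row and the column vertices other than $x$ are split only by membership in the apex part. I would check it directly; it cannot occur, since the apex part has size one.

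\textbf{Main obstacle.} I expect Step~2 to be the difficult part. Once clusters may repeat within a part, the clean row/column index propagation of Step~3 breaks, and the bookkeeping of which rows and columns are excluded needs care. My first attempt there would be to reduce to the injective case by replacing a repeated cluster with an unused one, thereby keeping all adjacency constraints, and only if that fails fall back on the direct counting via Step~1.
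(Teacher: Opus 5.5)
Your Steps~1 and~3 coincide with the paper's proof. Step~1 is its Claim~\ref{claim:row analysis}, and Step~3 is its closing apex argument. The ``degenerate situation'' you mention is vacuous: in the injective case the apex is the only vertex at $x$, so every row and column vertex other than $x$ lies in a non-apex part.

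The gap is Step~2, which remains a plan rather than an argument. Your preferred route, moving one of two vertices that share a cluster to an unused cluster, is not justified. The new cluster may share a row or column with clusters of several vertices outside that part. Adjacency would then force the moved vertex's index to equal each of theirs, and this need not be possible, so nothing guarantees the copy survives. Your fallback count also does not close in one step, because a part may stack several vertices in one cluster. Having at least $(q-1)s$ vertices outside $P$ in $(q-1)(s-1)$ clusters only tells you that some other part also repeats a cluster.

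The iteration you allude to does close once it is made precise. Suppose $P_1,\dots,P_k$ are distinct parts with repeated clusters at $(i_1,a_1),\dots,(i_k,a_k)$, where the $i_j$ are distinct and the $a_j$ are distinct. By Step~1, every vertex outside $P_1\cup\cdots\cup P_k$ lies in one of the $(q-k)(s-k)$ clusters avoiding these rows and columns. There are at least $qs-ks=(q-k)s$ such vertices, and they include the apex, since a part of size one never contains a repeated cluster. If $k<\min\{q,s\}$, then $(q-k)s>(q-k)(s-k)$, so two of these vertices share a cluster. This gives a new part $P_{k+1}$ with a repeat in a fresh row and a fresh column. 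At $k=\min\{q,s\}$ no cluster is left for the apex, which is a contradiction.

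The paper counts by rows instead. A row either has no repeated cluster, so it holds at most $s$ vertices, or by Step~1 it holds vertices of a single part, again at most $s$. Since the copy has $qs$ vertices, every row holds exactly $s$. A repeat at $(i,a)$ then makes row $i$ an entire part of size $s$. Every other row must avoid column $a$, so it too has a repeat and is an entire part of size $s$. This produces $q$ parts of size $s$, whereas $H_{r,s}$ has only $q-1$.
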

    \begin{claim}\label{claim:bounded-image-special-blowup}
        For every constant $C>0$, there exists an integer $m_0$ such that,
        for every $m\ge m_0$, every homomorphic image $\Gamma$ of
        $\Gamma_m$ with $|V(\Gamma)|\le C$ contains a copy of $K_{qs}$,
        and hence a copy of $H_{r,s}$.
    \end{claim}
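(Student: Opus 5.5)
The plan is to use pigeonhole on the \emph{layers} of $\Gamma_m$. For each index $j\in[m]$, the $j$-th layer is the set $\{x_j : x\in[q]\times[s]\}$, consisting of one vertex from each cluster $U_x$. Fix a homomorphism $\phi:\Gamma_m\to\Gamma$ with $|V(\Gamma)|\le C$. To each $j\in[m]$ we associate the vector $\bigl(\phi(x_j)\bigr)_{x\in[q]\times[s]}\in V(\Gamma)^{qs}$. There are at most $C^{qs}$ such vectors, so we set $m_0:=C^{qs}+1$. Then for $m\ge m_0$ there are two distinct indices $j\neq \ell$ in $[m]$ with $\phi(x_j)=\phi(x_\ell)=:u_x$ for every grid point $x$.

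The next step is to show that $\{u_x : x\in[q]\times[s]\}$ spans a clique on $qs$ distinct vertices of $\Gamma$. Take two distinct grid points $x,y$ and split according to the colour $c(xy)$ in \eqref{eq:grid-colouring}.
\begin{itemize}
\item If $c(xy)=1$, then by Definition~\ref{def:special blowup of multipartitecliques} the pair $(U_x,U_y)$ is the matching, so $x_jy_j\in E(\Gamma_m)$. Hence $u_xu_y=\phi(x_j)\phi(y_j)\in E(\Gamma)$.
\item If $c(xy)=2$, the pair is the complement of the matching, and since $j\ne\ell$ we have $x_jy_\ell\in E(\Gamma_m)$. Hence $u_xu_y=\phi(x_j)\phi(y_\ell)\in E(\Gamma)$.
\end{itemize}
This is exactly where the two edge types cooperate: the matching pairs are witnessed inside a single layer, and the co-matching pairs are witnessed across the two repeated layers. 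Since $\Gamma$ has no loops, adjacency of $u_x$ and $u_y$ forces $u_x\neq u_y$. Therefore the $u_x$ are $qs$ pairwise distinct, pairwise adjacent vertices, giving a copy of $K_{qs}$ in $\Gamma$.

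Finally, $H_{r,s}=K_{1,s-1,s,\dots,s}$ has $1+(s-1)+(r-2)s=(r-1)s=qs$ vertices, so it is a subgraph of $K_{qs}$, and $\Gamma$ contains $H_{r,s}$. There is no genuine obstacle in this argument. The only points needing care are choosing the pigeonhole over full ordered layer vectors, rather than over single clusters, so that the same pair $j\ne\ell$ serves every grid pair simultaneously, and invoking looplessness of $\Gamma$ to get distinctness of the images.
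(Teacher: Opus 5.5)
Your proposal is correct and is essentially the paper's proof: pigeonhole over the ordered $qs$-tuples $(\phi(x_j))_{x\in[q]\times[s]}$ to find two repeated layers $j\ne\ell$, then witness colour-$1$ pairs inside one layer and colour-$2$ pairs across the two layers, which gives a $K_{qs}$ and hence $H_{r,s}$. The only cosmetic difference is that the paper takes $m_0:=\lfloor C^{qs}\rfloor+1$, so that $m_0$ is an integer even when $C$ is not.
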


    Let us complete the proof of the lower bound in \Cref{lem:hom approaching to 1-1/r} by using the above two claims.
    Given any $\varepsilon > 0$, we can choose $m$ large enough so that
    $\frac{\delta(\Gamma_m)}{|V(\Gamma_m)|} \geq
    \frac{r-2}{r-1}\cdot\frac{s-1}{s} - \varepsilon$, by \eqref{eq:degree-ratio-Gamma-m}. Also, $\Gamma_m$ is $H_{r,s}$-free but has no $H_{r,s}$-free homomorphic image on at most $C$ vertices, as required.
    \end{proof}
    \noindent
    In the next two sections, we prove Claims \ref{claim:Gamma-m-H-free} and \ref{claim:bounded-image-special-blowup}.

    \subsubsection{Proof of Claim \ref{claim:Gamma-m-H-free}}
        Suppose for a contradiction that $\Gamma_m$ contains a copy
        $H$ of $H_{r,s}$.
        Note that the vertices of $\Gamma_m$ have the form $(i,a)_j$ for $(i,a) \in [q] \times [s]$ and $j \in [m]$.
        For a vertex $v=(i,a)_j$ of $\Gamma_m$, write
        \[
            \pi(v):=(i,a)\in[q]\times[s]
            \quad\text{and}\quad
            \ell(v):=j\in[m].
        \]

        \begin{claim}\label{claim:row analysis}
            Consider any grid point
            $(i,a) \in [q] \times [s]$, and suppose that $V(H)$ contains at least two vertices with base point $(i,a)$, i.e., vertices $u,v \in V(H)$ with $\pi(u) = \pi(v) = (i,a)$. Then there is a partite class $P$ of
            $H \cong H_{r,s} = K_{1,s-1,s,\dots,s}$ such that all vertices of $H$ in row $i$ belong to $P$; in particular, $u,v \in P$.
        \end{claim}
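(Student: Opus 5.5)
The plan is to use only two features of $\Gamma_m$: each cluster $U_{(i,a)}$ is independent, and between two clusters in the same row the edges form a perfect matching. These are set against the fact that $H\cong H_{r,s}$ is complete multipartite. In $H$, two vertices are nonadjacent exactly when they lie in the same partite class. So it suffices to show that every vertex of $H$ in row $i$ is nonadjacent in $\Gamma_m$ to $u$ or to $v$.

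First, $u$ and $v$ both lie in the cluster $U_{(i,a)}$, which is independent in $\Gamma_m$, so $uv\notin E(\Gamma_m)$. Since $H$ is a subgraph of $\Gamma_m$, also $uv\notin E(H)$, and hence $u$ and $v$ lie in a common partite class $P$ of $H$. Since $u\neq v$ lie in the same cluster, they have distinct labels, $\ell(u)\neq\ell(v)$.

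Next, let $w\in V(H)$ be any vertex with $\pi(w)=(i,b)$ in row $i$, and suppose for contradiction that $w\notin P$. Then $w$ lies in a different partite class from both $u$ and $v$. As $H$ is complete multipartite, $w$ is adjacent to both $u$ and $v$ in $H$, and hence in $\Gamma_m$. There are two cases.
\begin{itemize}
\item If $b=a$, then $w\in U_{(i,a)}$, which is independent, so $w$ cannot be adjacent to $u$. This is a contradiction.
\item If $b\neq a$, then the grid points $(i,a)$ and $(i,b)$ are in the same row, so the edge between them has color $1$ by \eqref{eq:grid-colouring}. Thus $\Gamma_m[U_{(i,a)},U_{(i,b)}]$ is the matching $\{(i,a)_j(i,b)_j : j\in[m]\}$. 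Hence $w$ has exactly one neighbour in $U_{(i,a)}$, namely the vertex with label $\ell(w)$. But $u$ and $v$ are two distinct neighbours of $w$ in $U_{(i,a)}$, since $\ell(u)\neq\ell(v)$. This is again a contradiction.
\end{itemize}

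Therefore every vertex of $H$ in row $i$ belongs to $P$, and in particular $u,v\in P$. I expect no real obstacle here. The only point needing care is the matching case, where we must use that $u$ and $v$ are distinct vertices of the same cluster and therefore carry different labels.
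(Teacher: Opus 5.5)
Your proof is correct and follows essentially the same route as the paper: $u,v$ share a partite class $P$ because $U_{(i,a)}$ is independent, and any $w\notin P$ in row $i$ would be adjacent to both $u$ and $v$, which is impossible because the same-row pair is joined by a color-$1$ matching. Your separate treatment of the case $\pi(w)=(i,a)$, handled by independence of the cluster, fills in a small step the paper leaves implicit.
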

        \begin{poc}
        For convenience, put $x := (i,a)$.
        Since there are no edges inside the cluster $U_x = \{x_j : j \in [m]\}$, and $u,v \in U_x$ by assumption, the vertices
        $u$ and $v$ belong to the same partite class of $H$, say $P$.
        If $w\in V(H)\setminus P$, then
        $w$ is adjacent to both $u$ and $v$. The base edge
        $\pi(w)x$ cannot have color $1$, since in that case adjacency to
        both $u$ and $v$ would force
        $\ell(w)=\ell(u)=\ell(v),$
        which is impossible because $u \neq v$.  Thus $\pi(w)x$ has color $2$. In
        particular, no vertex outside $P$ has its base point in row $i$ (because each such base point is connected to $x=(i,a)$ with an edge of color 1).
        Hence all vertices of $H$ in row $i$ belong to $P$, as required.
        \end{poc}

        Next, observe that each row $i$ contains at most $s$ vertices of
        $H$. Indeed, if the base points of the vertices of $H$
        in row $i$ are all distinct, this is immediate. And otherwise, there is $a \in [s]$ such that $H$ has at least two vertices $u,v$ with $\pi(u) = \pi(v) = (i,a)$. Then by Claim \ref{claim:row analysis}, all vertices of $H$ in row $i$ belong to some partite class of $H$, and hence there are at most $s$ such vertices.

        On the other hand, graph $H$ has
        $1+(s-1)+(q-1)s=qs$
        vertices and there are $q$ rows.  Consequently, every row
        contains exactly $s$ vertices of $H$.

        \begin{claim}\label{claim:pi is injective}
        For every two distinct vertices $u,v \in V(H)$ it holds that $\pi(u) \neq \pi(v)$.
        \end{claim}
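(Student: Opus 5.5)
We argue by contradiction, using Claim~\ref{claim:row analysis} together with the fact, just established, that every row contains exactly $s$ vertices of $H$. Suppose $u\neq v$ in $V(H)$ satisfy $\pi(u)=\pi(v)=x=(i,a)$. By Claim~\ref{claim:row analysis}, all vertices of $H$ in row $i$ lie in a single partite class $P$ of $H$. Row $i$ contains exactly $s$ vertices of $H$, and every class of $H_{r,s}=K_{1,s-1,s,\dots,s}$ has size at most $s$. Hence $P$ is a class of size exactly $s$, and $P$ coincides with the set of vertices of $H$ lying in row $i$.

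Next we use the second half of the proof of Claim~\ref{claim:row analysis}. Every $w\in V(H)\setminus P$ is adjacent to both $u$ and $v$. Since $\ell(u)\neq\ell(v)$, the base edge $\pi(w)x$ must have color $2$. By \eqref{eq:grid-colouring}, this means that $\pi(w)$ lies neither in row $i$ nor in column $a$. So no vertex of $H$ outside $P$ has its base point in column $a$.

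To reach a contradiction, we find a row $k\neq i$ whose $s$ vertices of $H$ have pairwise distinct base points. Such a row occupies all $s$ columns, in particular column $a$. Its vertices lie outside $P$, because $P$ is exactly the set of row-$i$ vertices. This contradicts the previous paragraph.

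The main step is producing this row $k$; we do it by counting classes against rows. Suppose every row contained two vertices of $H$ with a common base point. Then the argument of the first paragraph, applied to each row, would show that every row's $s$ vertices form one full partite class of size $s$. The $q$ rows partition $V(H)$, so $H$ would have exactly $q$ classes, all of size $s$. But $H_{r,s}$ has a class of size $1<s$. Hence some row $k$ has $s$ vertices with pairwise distinct base points, and $k\neq i$ because row $i$ contains $u$ and $v$. This is the contradiction that proves Claim~\ref{claim:pi is injective}.
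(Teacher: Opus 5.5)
Your proof is correct and uses the same ingredients as the paper's: Claim~\ref{claim:row analysis}, the fact that each row contains exactly $s$ vertices of $H$, the observation that no vertex outside $P$ can have its base point in column $a$, and a count of partite classes of size $s$. The only difference is the order of deductions. The paper argues that column $a$ is blocked in every row $i'\neq i$, which forces a repeated base point and hence a full size-$s$ class in each such row. You instead use the class count first to produce a row $k\neq i$ with no repeated base point, and then obtain the contradiction from its vertex in column $a$.
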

        \begin{poc}
        Suppose for contradiction that
        $\pi(u)=\pi(v)=(i,a) =: x$ for some $(i,a) \in [q] \times [s]$.
        By Claim \ref{claim:row analysis}, there is a partite class $P$ of $H$ such that $u,v \in P$ and furthermore, the vertices of $H$ belonging to row $i$ are precisely the vertices in $P$.
        Thus $P$ is one
        of the partite classes of size $s$. Fix any $i'\ne i$ and consider row $i'$.  All $s$ vertices of $H$ in row $i'$ lie
        outside $P$. As observed above, their base points must be joined
        to $x$ by color-$2$ edges, because
        if $w \in V(H)$ is such that $\pi(w)x$ has color 1, then $w$ cannot be adjacent to both $u,v$.
        It follows that $\pi(w) \neq (i',a)$ for every $w \in V(H)$, because the edge between $x = (i,a)$ and $(i',a)$ has color 1.
        We conclude that the $s$ vertices of $H$ in row $i'$ are placed on at most $s-1$ base points (because having $\pi(w) = (i',a)$ is impossible).
        Hence, $H$ has two vertices which have the same base point in row $i'$.
        Now, by Claim \ref{claim:row analysis}, the vertices of $H$ in row $i'$ must all belong to the same partite class of $H$, and this partite class has size $s$ (because $H$ has $s$ vertices in each row).
        Since this holds for all $i' \in [q] \setminus \{i\}$, we conclude that
        $H$ has $q$ partite classes of size $s$. But this is impossible,
        because $H_{r,s}$ has only
        $r-2=q-1$ partite classes of size $s$.
        \end{poc}

        Thus, the map $\pi$ is injective on $V(H)$.
        Since both $V(H)$ and $[q]\times[s]$ have size $qs$,
        the map $\pi$ is in fact a bijection.

        Denote the partite classes of $H \cong H_{r,s}$ by
        $P_0,P_1,P_2,\ldots,P_q,$
        where
        \[
            |P_0|=1,\qquad |P_1|=s-1,
            \qquad |P_i|=s\quad(2\le i\le q).
        \]
        Write $P_0=\{z\}$, and put
        \[
            (i_0,a_0) := \pi(z),\qquad
            j_0 := \ell(z).
        \]
        Also, let
        $$R_{i_0} := \{(i_0,a) : a \in [s] \setminus \{a_0\}\} \; \; \; \; \;
        C_{a_0} := \{(i,a_0) : i \in [q] \setminus \{i_0\}\}
        $$
        denote the $i_0$'th row without the point $(i_0,a_0)$ and the $a_0$'th column without the point $(i_0,a_0)$, respectively.
        By the bijectivity of $\pi$, for every
        $x \in R_{i_0} \cup C_{a_0}$ there is $v_x \in V(H)$ with $\pi(v_x) = x$.
        Also, the vertices $z,v_x$ are adjacent in $H$ because $z$ is adjacent to all vertices of $H$ (since $P_0 = \{z\}$ is the partite class of size 1).
        On the other hand, each
        $x \in R_{i_0} \cup C_{a_0}$ is connected to $(i_0,a_0)$ in color 1. This means that $\ell(v_x) = \ell(z) = j_0$ for each $x \in R_{i_0} \cup C_{a_0}$.

        Next, consider any $x \in R_{i_0}$ and $y \in C_{a_0}$. The base points $x$ and $y$ differ in both
        coordinates, so the base edge $xy$ has color $2$. But since $\ell(v_x) = \ell(v_y)$, this means that $v_x,v_y$ are not adjacent, and hence belong to the same partite class of $H$. Varying $x$ and $y$, we conclude that all
        \[
            (s-1)+(q-1)=s+q-2
        \]
        vertices $v_x$ for $x\in R_{i_0}\cup C_{a_0}$. belong
        to the same partite class. But $q\ge 3$, so $s+q-2>s$, whereas every
        partite class of $H \cong H_{r,s}$ has size at most $s$. This final
        contradiction proves Claim \ref{claim:Gamma-m-H-free}.

    \subsubsection{Proof of Claim \ref{claim:bounded-image-special-blowup}}

        Let            $\varphi:\Gamma_m\rightarrow\Gamma$
        be a homomorphism with $|V(\Gamma)|\le C$.  For every $j\in[m]$,
        consider the ordered $qs$-tuple
        \[
            \mathbf a_j
            :=\bigl(\varphi(x_j)\bigr)_{x\in[q]\times[s]}.
        \]
        There are at most $C^{qs}$ possible choices for $\mathbf a_j$. Hence, if
        \[
            m \geq m_0:=\lfloor C^{qs}\rfloor+1,
        \]
        then there exist distinct $j,k\in[m]$ such
        that $\mathbf a_j=\mathbf a_k$.

        For each base point $x\in[q]\times[s]$, put
        \[
            u_x:=\varphi(x_j)=\varphi(x_k).
        \]
        We claim that $\{u_x:x\in[q]\times[s]\}$ is a clique in
        $\Gamma$.  Fix distinct base points
        $x,y \in [q] \times [s]$.
        If $c(xy)=1$, then
        $x_jy_j\in E(\Gamma_m)$, and hence $u_xu_y = \varphi(x_j)\varphi(y_j) \in E(\Gamma)$ (as $\varphi$ is a homomorphism).
        And if
        $c(xy)=2$, then $x_jy_k\in E(\Gamma_m)$ because $j \neq k$, and
        again $u_xu_y = \varphi(x_j)\varphi(y_k) \in E(\Gamma)$. Thus every two distinct vertices in $\{u_x:x\in[q]\times[s]\}$ are adjacent, as claimed. In particular, these vertices are pairwise
        distinct and form a copy of $K_{qs}$ in $\Gamma$, as required.

\section{Proof of \Cref{thm:complete results on tripartite graphs on VC}}\label{sec:Krst VC}
The proof of \Cref{thm:complete results on tripartite graphs on VC} is split into the following four theorems:
\begin{theorem}\label{thm:K(r,s,t) r>=2}
$\delta_{\mathrm{VC}}(K_{r,s,t}) = \frac{1}{2}$ for all $2\le r\le s\le t$.
\end{theorem}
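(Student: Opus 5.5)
For the upper bound I would quote the observation made after \Cref{cor:separate thresholds by chromatic number}. Since $\chi(K_{r,s,t})=3$, the Erd\H{o}s--Stone theorem gives $\delta_{\mathrm B}(K_{r,s,t})\le 1/2$. Combined with $\delta_{\mathrm{VC}}\le\delta_{\mathrm B}$, this yields $\delta_{\mathrm{VC}}(K_{r,s,t})\le 1/2$. The whole work is therefore the lower bound. For every $\varepsilon>0$ and every $d$, I need a \emph{maximal} $K_{r,s,t}$-free graph $G$ with $\delta(G)\ge(1/2-\varepsilon)|V(G)|$ and $\VCdim(G)\ge d$.

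The construction follows the ``coding gadget plus reservoirs'' template.

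\textbf{Reservoirs.} Take two reservoirs $X,Y$ of size $N$, joined completely. This bipartite graph is triangle-free and gives every reservoir vertex degree about $n/2$.

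\textbf{Coding gadget.} Fix a bipartite graph $F$ with parts $A_0,B_0$ and VC-dimension $d$, with $|A_0|,|B_0|=O_d(1)$. Place $A_0$ on the $X$-side, complete to $Y$, and $B_0$ on the $Y$-side, complete to $X$. Realise the edges of $F$ as edges between $A_0$ and $B_0$. The resulting graph is still bipartite, with parts $X\cup A_0$ and $Y\cup B_0$, so it is $H$-free. Every vertex has degree at least $N\approx n/2$.

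\textbf{Protecting nonedges.} The danger is that a maximal completion adds the missing pairs $ab$ with $a\in A_0$, $b\in B_0$, $ab\notin E(F)$, and so destroys the encoding. To protect each such pair I attach a private bounded gadget $D_{ab}$ realising a copy of $K_{r,s,t}-e$ in which $a,b$ play the endpoints of the missing edge $e$. Then $G+ab$ contains $H$, so every maximal $H$-free supergraph of $G$ still contains $F$ as a bi-induced copy. Hence $\VCdim\ge d$ is preserved.

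\textbf{Where $r\ge2$ enters.} I would take $e$ inside a part pair $(R,S)$ of sizes $r,s$. Because $r\ge2$, the graph $K_{r,s,t}-e$ still has both endpoints of $e$ lying in a copy of $K_{1,1,t}$-type structure through other vertices of the same parts. So the gadget vertices can be split between the two reservoir sides: the vertices of the $t$-part go to one side and the remaining gadget vertices to the other. Each gadget vertex is joined to a full reservoir side to obtain degree about $n/2$. Meanwhile every triangle of $G$ is forced to use gadget vertices in a controlled way.

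\textbf{Main obstacle.} The key step is verifying that $G$ itself is $K_{r,s,t}$-free. Any copy of $H$ must contain triangles, and the triangles of $G$ all live near the bounded gadgets. I would argue as in the $K_{1,s,t}$ constructions: for each vertex $v$, show that $G[N(v)]$ contains no $K_{r-1,s,t}$-compatible pattern. This is done by making each gadget vertex's neighbourhood into a reservoir deliberately incomplete, for instance by attaching gadget vertices to all but a small, carefully chosen fraction of their reservoir side, costing only $\varepsilon n$ in degree. The incompleteness is arranged so that the only way to complete a copy of $H$ is to use the specific missing pair $ab$.

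I expect this case analysis to be the hardest step. It must rule out copies of $H$ that mix different gadgets $D_{ab}$, $D_{a'b'}$ sharing the vertices $a$ or $b$. If direct attachment fails, the first thing I would try is to make the gadgets pairwise far apart, by letting different gadgets use disjoint reservoir sub-blocks from which they miss neighbours. Copies of $H$ straddling two gadgets then cannot find the required common neighbourhoods of size $t$.
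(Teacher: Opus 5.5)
Your upper bound, and your overall template, are the paper's: a bi-induced copy of $F$, each nonedge $ab$ of $F$ saturated by a bounded copy of $K_{r,s,t}-e$, and two completely joined reservoirs $X,Y$ of size $N$ with every other vertex joined to one whole reservoir. The gap is that you never prove the resulting graph is $K_{r,s,t}$-free, and that is the entire content of the lower bound.

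You also never fix the construction. You attach $a$ to $Y$ and $b$ to $X$, then say ``the $t$-part goes to one side and the remaining gadget vertices to the other'' without saying where the private vertices go. Your account of where $r\ge2$ enters (a ``$K_{1,1,t}$-type structure'') does not correspond to any step of an argument.

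The fallback you propose cannot work. If each gadget vertex misses only an $\varepsilon$-fraction of its reservoir side, then any bounded set of gadget vertices attached to the same independent side still has $(1-O(\varepsilon))N$ common neighbours there. So such incompleteness can never stop a $K_{r,s}$ among those vertices from extending to a $K_{r,s,t}$. Only the choice of which vertices share a side matters.

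The missing idea is a neighbourhood test. Since $r\le s\le t$, every vertex of a copy of $K_{r,s,t}$ has a $K_{r,s}$ in its neighbourhood. Hence no reservoir vertex lies in a copy, provided the vertices attached to each reservoir induce a $K_{r,s}$-free graph.

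In the paper, $G'[N(y)]$ is a disjoint union of stars $K_{1,s}$ (centre $a_i$, leaves $B_i$) plus isolated vertices. Likewise $G'[N(x)]$ is a disjoint union of copies of $K_{r-1,t}$ plus isolated vertices. The star case is exactly where $r\ge2$ is used: for $r=1$, $a_i$, $B_i$ and $t$ vertices of $Y$ form a $K_{1,s,t}$. A block argument then confines any copy to $(\{a_i\}\cup B_i)\cup(C_j\cup D_j)$, which has exactly $r+s+t$ vertices and is not complete tripartite.

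Your private-gadget variant closes the same way, with \emph{full} attachments. Join each $T$ to $Y$ (like $a$), and join $R\setminus\{a\}$ and $S\setminus\{b\}$ to $X$ (like $b$). Then $G'[N(y)]$ is a union of stars centred in $A_0$ plus isolated vertices. Also $G'[N(x)]$ is a union of copies of $K_{r-1,s}$ glued at vertices of $B_0$ plus isolated vertices. Neither contains $K_{r,s}$.

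Now consider a reservoir-free copy through a private vertex $v$ of $D_{ab}$. Its other two parts lie in $N(v)\subseteq D_{ab}\cup\{a,b\}$. Every vertex outside this set has at most one neighbour inside it, so the copy lies in $D_{ab}\cup\{a,b\}$. That set has exactly $r+s+t$ vertices and misses $ab$, so it contains no copy. A copy with no private vertex lies in $G'[A_0\cup B_0]=F$, which is bipartite. This verification is what your proposal must supply.
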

\begin{theorem}\label{thm:K(1,s,t), = 1/2}
$\delta_{\mathrm{VC}}(K_{1,s,t}) = \frac{1}{2}$
for all $s,t$ with $3\le s\le t\le 2s-3$,
\end{theorem}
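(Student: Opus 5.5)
The equality will follow from matching upper and lower bounds, with nearly all of the work in the lower bound. For the upper bound I would invoke the chain $\delta_{\mathrm{VC}}(H)\le\delta_{\mathrm B}(H)\le (r-2)/(r-1)$ recorded after \Cref{cor:separate thresholds by chromatic number}. For $H=K_{1,s,t}$ this gives $\delta_{\mathrm{VC}}(K_{1,s,t})\le 1/2$. So it remains to show $\delta_{\mathrm{VC}}(K_{1,s,t})\ge 1/2$ whenever $3\le s\le t\le 2s-3$. That is, for every $\eps>0$ and every $d$ I need a maximal $K_{1,s,t}$-free graph $G$ with $\delta(G)\ge(1/2-\eps)|V(G)|$ and $\VCdim(G)\ge d$.

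Following the scheme in the introduction, I would build $G$ from a small coding gadget plus a large degree-raising part. Fix a bipartite graph $F$ with parts $S,T$ of VC-dimension $d$; for instance $|S|=d$ and $T=2^S$. The first ingredient is an obstruction to completion. For each nonedge $ab$ of $F$ (with $a\in S$, $b\in T$) I attach a private gadget that makes $ab$ already saturated: adding $ab$ must create a copy of $K_{1,s,t}$. Then any maximal $K_{1,s,t}$-free supergraph of the construction contains $F$ bi-induced, and hence has VC-dimension at least $d$.

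The hypothesis $t\le 2s-3$ is exactly what the gadgets exploit. In that range there is a bipartition of $V(K_{1,s,t})$ in which neither side contains a $K_{1,s}$: split the apex together with parts of sizes roughly $s/2$ and $t/2$ suitably. A copy of $K_{1,s,t}$ can therefore sit across two sides whose internal graphs are $K_{1,s}$-free. I would place the vertices of $S\cup T$ and their gadget vertices on both sides of a large almost-complete bipartite pair $(A,B)$, with $|A|=|B|=N\gg|F|$. The saturating copy of $K_{1,s,t}$ for $ab$ then uses $a$ and $b$ as adjacent vertices on the two sides of this bipartition. Choosing which role $a$ and $b$ play (apex versus part vertex) is a finite case design, and I would first try making $a$ the apex.

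The degree-raising part follows the proof of \Cref{thm: general lowbd for hom K_1st}. Split $A$ and $B$ into reservoirs $X_1,\dots,X_k$ and $Y_1,\dots,Y_k$. Make the reservoirs complete to each other across the sides, and attach each gadget vertex to a cyclically shifted set of reservoirs covering a $(1-o(1))$ fraction of the other side. This gives minimum degree $(1/2-\eps)|V(G)|$. The attachments are deliberately incomplete, so that the common cross-neighbourhood of any internal star on one side has fewer than $t$ vertices on the other side. This is essential because an internal $K_{1,s}$ on one side with $t$ common neighbours on the other side immediately yields $K_{1,s,t}$. For the same reason the vertices of $F$, which carry large internal degree inside the coding part, must have carefully shifted reservoir neighbourhoods.

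The main obstacle is verifying that the whole graph is $K_{1,s,t}$-free while each saturated nonedge still completes a copy. I would carry this out as in the neighbourhood claim of \Cref{thm: general lowbd for hom K_1st}: show that $G[N(v)]$ is $K_{s,t}$-free for every $v$, splitting into cases according to whether $v$ is a reservoir vertex, a gadget vertex, or a vertex of $S\cup T$. The key counting input is the cyclic-shift bound on how many reservoir classes a small set of gadget vertices can have in common. I expect the delicate case to be $v\in S\cup T$: its neighbourhood mixes the encoded $F$-edges with the reservoirs, and a spurious $K_{s,t}$ must be ruled out there without destroying the saturating copies.
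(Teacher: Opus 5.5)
Your upper bound and your coding principle are right: make each encoded non-edge $K_{1,s,t}$-saturated so that every maximal completion keeps $F$ bi-induced, with $a$ as the apex. The gap is that the mechanism you propose for $K_{1,s,t}$-freeness cannot work at density $\frac12$.

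In your setup a vertex has only $o(N)$ neighbours on its own side. So $\delta(G)\ge(\frac12-\eps)|V(G)|$ forces it to miss at most $O(\eps N)$ vertices of the opposite side. Hence any $s+1$ vertices on one side have $(1-O(s\eps))N\gg t$ common neighbours across, whatever cyclic shifts you choose. Indeed, your requirement that attachments cover a $(1-o(1))$ fraction already contradicts the goal of fewer than $t$ common neighbours. The shifting trick in \Cref{thm: general lowbd for hom K_1st} works only because the density $s/(1+s+t)$ there lets every vertex miss a constant fraction of the reservoirs.

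So both sides must themselves be $K_{1,s}$-free, i.e.\ have maximum degree at most $s-1$. This is the same observation that drives the paper's upper bound for $t\ge 2s-2$, and it kills private per-non-edge gadgets. In any bipartition of $V(K_{1,s,t})$ with $K_{1,s}$-free sides, the apex side induces $K_{1,\alpha,\beta}$ with $\alpha\ge 1$, $\beta\ge t-s+1$ and $\alpha+\beta\le s-1$. One checks that every vertex of the copy then has at least two neighbours on its own side. So $a$ would acquire new same-side neighbours for each of its many non-neighbours in $F$. The same constraint shows that your suggested split, with roughly $s/2$ and $t/2$ vertices on the apex side, fails: the apex would have about $(s+t)/2\ge s$ same-side neighbours.

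The missing idea is to \emph{share} the gadget: one bounded block per vertex of $F$ on each side, reused by all pairs.
\begin{itemize}
\item The paper takes $L_i=\{a_i,b_i\}\cup C_i$ inducing $K_{1,1,s-2}$ and $R_j'=D_j\cup E_j$ inducing $K_{s-1,t-s+2}$. Both have maximum degree at most $s-1$, the second precisely because $t\le 2s-3$.
\item For $ij\notin E(F)$ it makes $(b_i,E_j)$, $(C_i,D_j)$, $(a_i,D_j)$, $(a_i,E_j\setminus\{e_j\})$ complete, so $L_i\cup R_j'$ induces $K_{1,s,t}$ minus the edge $a_ie_j$.
\item For $ij\in E(F)$ it adds only the edge $a_ie_j$.
\item Two reservoirs $X,Y$ of size $N$ are then joined \emph{completely} to $\bigcup_j R_j'$, to $\bigcup_i L_i$, and to each other, with no shifting at all.
\end{itemize}
Freeness is then short.
\begin{itemize}
\item A reservoir vertex has a neighbourhood of maximum degree at most $s-1$, so it lies in no copy.
\item Two copy vertices in distinct same-side blocks are non-adjacent, hence in one part. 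Their common neighbourhood lies in the other side plus a reservoir, yet would have to contain $K_{1,s}$.
\item Therefore a copy spans a single $L_i\cup R_j'$ on exactly $1+s+t$ vertices, and the non-edge structure of that set rules it out.
\end{itemize}
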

\begin{theorem}\label{thm:K(1,s,t), < 1/2}
$\delta_{\mathrm{VC}}(K_{1,s,t}) \geq \frac{s}{2s+1}$
for all $1 \leq s \leq t$.
\end{theorem}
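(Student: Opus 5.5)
The plan is to prove the bound by a lower-bound construction. Fix $s\le t$, an integer $d$, and $\varepsilon>0$. I want arbitrarily large maximal $K_{1,s,t}$-free graphs $G^*$ with $\delta(G^*)\ge(\frac{s}{2s+1}-\varepsilon)|V(G^*)|$ and $\VCdim(G^*)\ge d$. The key reduction is the saturation device from the introduction. It suffices to build a $K_{1,s,t}$-free graph $G$ with three properties:
(a) $G$ contains a bipartite graph $F$ with parts $P,Q$ and $\VCdim(F)\ge d$, sitting bi-inducedly in $G$;
(b) for every non-edge $pq$ of $F$ with $p\in P$, $q\in Q$, the graph $G+pq$ contains $K_{1,s,t}$;
(c) $\delta(G)\ge(\frac{s}{2s+1}-\varepsilon)|V(G)|$.
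Given such $G$, let $G^*$ be any maximal $K_{1,s,t}$-free supergraph on the same vertex set. By (b), no non-edge of $F$ is ever added, so $F$ stays bi-induced in $G^*$ and $\VCdim(G^*)\ge d$. Adding edges only raises the minimum degree, so (c) transfers to $G^*$.

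For the degree-raising part I would reuse the reservoirs from the proof of \Cref{thm: general lowbd for hom K_1st} in the diagonal case $t=s$. Take independent sets $X_1,\dots,X_{s+1}$ of size $N$ and $Y$ of size $sN$, with $X=\bigcup_i X_i$ completely joined to $Y$. This gives $(2s+1)N$ reservoir vertices, each of degree at least $sN$. Every gadget vertex $v$ will receive one of two kinds of reservoir attachment:
\begin{itemize}
\item a ``$Y$-type'' attachment, where $v$ is joined to all of $Y$;
\item an ``$X$-type'' attachment of cyclic offset $p$, where $v$ is joined to $X_p,\dots,X_{p+s-1}$ (indices mod $s+1$).
\end{itemize}
Either way each gadget vertex gets about $sN$ reservoir neighbours. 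Since the gadget has $O_{d,s,t}(1)$ vertices and $N\to\infty$, property (c) follows.

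The coding part is as follows. Let $F$ be the bipartite incidence graph of $[d]$ versus $2^{[d]}$. For each non-edge $pq$ of $F$, add a private set of $s+t-1$ new vertices. Together with $p,q$, these vertices span a copy of $K_{1,s,t}$ minus the single edge $pq$, with $p$ as the apex and $q$ in the part of size $t$ (or placed otherwise, to be tuned). This guarantees (b). The gadgets for different non-edges share only vertices of $P\cup Q$.

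The main work, and the step I expect to be the real obstacle, is choosing the attachment types so that $G$ is $K_{1,s,t}$-free, i.e.\ so that no neighbourhood $G[N(v)]$ contains $K_{s,t}$. I would verify this vertex-by-vertex, as in the claim in the proof of \Cref{thm: general lowbd for hom K_1st}, using two rules.
\begin{itemize}
\item Vertices inside one $s$-part of a gadget, and the apex side, are given attachments such that each vertex of $X$ sees at most $s$ vertices of any small clique-like piece. This mirrors the cyclic-offset trick, where every $x\in X$ has exactly $s$ neighbours in each $A_j$.
\item Adjacent gadget vertices are given different types where possible, so that $Y$-vertices and $X$-vertices never both lie in a common neighbourhood together with enough gadget vertices to complete a $K_{s,t}$.
\end{itemize}
The delicate points are the vertices $p\in P$ and $q\in Q$, which lie in many gadgets simultaneously. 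A common-neighbourhood pattern could be assembled across different gadgets, so I would need to keep gadgets sparse relative to one another. For instance, I would make the gadget vertices adjacent to $p$ pairwise non-adjacent across different gadgets, and check that within one gadget only $K_{s,t}$ minus an edge appears. If a uniform assignment fails, I would try to exploit the slack $t\ge s$, for example by putting $q$ on the $Y$-type side and the $t$-part on the $X$-type side.

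The case $s=1$, where the bound is $1/3$, can be handled by the same scheme with two $X$-classes, or deduced from the known triangle-free constructions.
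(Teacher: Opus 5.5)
Your reduction to (a)--(c) and your reservoir/degree count match the paper, whose reservoirs are yours with $X$ and $Y$ renamed. The gap is that the substantive step, $K_{1,s,t}$-freeness, is left as heuristics. Worse, with $p$ as apex (your default), $s\ge2$ and $t\ge s+1$, no choice of your two attachment types makes the private-gadget construction $K_{1,s,t}$-free. Write $S_{pq}$ for the private $s$-class and $T'_{pq}$ for the $t-1$ private vertices of the $t$-class. Every reservoir class has at least $N\ge t$ vertices, which gives two constraints:
\begin{itemize}
\item a $Y$-type vertex cannot have $s$ $Y$-type neighbours, since with $t$ vertices of $Y$ they form a $K_{s,t}$ in its neighbourhood;
\item an $X$-type vertex $v$ missing $X_{k_v}$ cannot have $s$ $X$-type neighbours all joined to one class $X_k$ with $k\ne k_v$. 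For $s\ge2$ this caps its $X$-type neighbours at $s$.
\end{itemize}
For large $d$, $p\in[d]$ is the apex of $2^{d-1}>s$ gadgets. Hence some gadget $pq$ has all of $S_{pq}\cup T'_{pq}$ of the type opposite to $p$.

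If that type is $Y$, any $w\in T'_{pq}$ has $S_{pq}\cup Y\subseteq N(w)$ spanning $K_{s,sN}$. If it is $X$, then for each $w\in T'_{pq}$ no class other than $X_{k_w}$ may be seen by all of $S_{pq}$. Since $|S_{pq}|=s$ and there are $s+1$ classes, $S_{pq}$ must miss $s$ distinct classes and every $w\in T'_{pq}$ must miss the remaining class $k^*$. Then for $v\in S_{pq}$ and $k\notin\{k_v,k^*\}$, the class $X_k\subseteq N(v)$ is complete to $T'_{pq}$. This gives $K_{t-1,N}\supseteq K_{s,t}$ as soon as $t\ge s+1$. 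Your fallback ($q$ on the $Y$-side, $t$-part on the $X$-side) is among the assignments excluded above. Other placements of $p,q$ face the same counting at every vertex of $P\cup Q$ lying in many private gadgets, and you exhibit no placement that survives.

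The missing idea is to share one gadget among all non-edges at a vertex rather than privatising it per non-edge. The paper builds one gadget per left vertex $i$: $A_i=\{a_i^0,\dots,a_i^s\}$ and $B_i\setminus\{b_i\}$ span $K_{1,s,t-1}$ with apex $a_i^0$. The right vertex $b_j$ is joined to the whole $s$-class $A_i\setminus\{a_i^0\}$ if $ij\notin E(F)$, so adding $a_i^0b_j$ completes $K_{1,s,t}$, but only to $a_i^0$ if $ij\in E(F)$. Each apex then lies in a single gadget and has exactly $s$ neighbours of its own type. These are its $s$-class, which deliberately shares the apex's cyclic type, contrary to your second rule. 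Meanwhile $b_j$ never sees both $a_i^0$ and $A_i\setminus\{a_i^0\}$, so $N(b_j)$ is independent. For the reservoir, $a_i^j$ is joined to every small class except the $j$-th of the $s+1$, so no reservoir vertex sees all of $A_i$; all of $B$ is joined to the set of size $sN$. The neighbourhood check is then a short case analysis.

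Separately, for $s=1$ you cannot simply quote triangle-free constructions. A maximal triangle-free graph need not be maximal $K_{1,1,t}$-free, so completion could destroy the encoded $F$. Your own scheme does work for $s=1$: the type constraints reduce to a proper $3$-colouring of the gadget graph, plus that graph being $K_{1,1,t}$-free. This still needs to be verified.
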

\begin{theorem}\label{thm:K(1,s,t) upper bound}
    $\delta_{\mathrm{VC}}(K_{1,s,t}) \leq
    \frac{s}{2s+1}$ for all $1 \leq s \leq t$
    with $2s-2\le t$.
\end{theorem}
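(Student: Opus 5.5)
We prove \Cref{thm:K(1,s,t) upper bound} by following the outline in the introduction. For $s=1$ (so $t\in\{1\}$ forced? no: $t\ge 0$ always holds, so all $t\ge1$ with $s=1$) the value is $1/3$, and we handle it separately. A maximal $K_{1,1,t}$-free graph with $\delta(G)\ge(1/3+\eps)n$ has few triangles, since each edge lies in fewer than $t$ triangles. By removal and \Cref{lem:AES-almost-partite}-type reasoning it is close to triangle-free, and we then reduce to the known bounded-VC structure of dense maximal triangle-free graphs~\cite{luczak_coloring_2010,Brandt_Cube}. The main case is $s\ge2$, $t\ge 2s-2$. Let $G$ be maximal $K_{1,s,t}$-free with $\delta(G)\ge(\frac{s}{2s+1}+\eps)n$. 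Since $\frac{s}{2s+1}\ge\frac25$ for $s\ge2$, \Cref{lem:AES-almost-partite} with $r=3$ gives a partition $V=A\cup B$ with $\Delta(G[A]),\Delta(G[B])\le\eps n/2$. Hence every vertex has at least $(\frac{s}{2s+1}+\eps/2)n$ neighbours across, both parts have size at most $(\frac{s+1}{2s+1}-\eps/2)n$, and so every vertex sees more than an $\frac{s}{s+1}$ fraction of the opposite side.

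The first step shows that $G[A]$ and $G[B]$ are $K_{1,s}$-free. Let $T$ be a star $K_{1,s}$ in $A$, so $|T|=s+1$. Averaging, as in \Cref{lem:Brooks application}, the degrees of these $s+1$ vertices into $B$ sum to more than $s|B|+\Omega(n)$. Therefore $\Omega(n)$ vertices of $B$ are adjacent to all of $T$. Pigeonholing gives $t$ such vertices, and together with the star this yields $K_{1,s,t}$ (centre as the singleton part, leaves as the $s$-part). The density fraction $\frac{s}{s+1}$ is exactly what makes this work. Consequently both internal graphs have maximum degree at most $s-1$.

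The second step uses maximality to show that every cross pair $ab$ with $a\in A$, $b\in B$ is an edge. Suppose $ab\notin E$. Adding $ab$ creates a copy $K$ of $K_{1,s,t}$ that uses $ab$. We must show that already $K$ minus the new edge can be modified, or more simply that any copy of $K_{1,s,t}$ in $G+ab$ would contain some internal structure forbidden by the first step. The key combinatorial fact is that when $t\ge 2s-2$, every 2-colouring (bipartition) of $V(K_{1,s,t})$ has one side containing a $K_{1,s}$. To see this, put the singleton $z$ on a side. If that side holds at least $s$ vertices of the other two parts from a single part, we are done. Otherwise it holds at most $s-1$ vertices from the $t$-part, so the other side holds at least $t-s+1\ge s-1$ of them, plus the vertices it holds from the $s$-part. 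A short case analysis then gives a star $K_{1,s}$ whose centre lies in the $s$- or $t$-part on the opposite side; this is where $t\ge2s-2$ is used. Now the copy $K$ in $G+ab$ meets $A$ and $B$ in a bipartition. The monochromatic $K_{1,s}$ lives within $A$ or within $B$, and it avoids the edge $ab$ because $ab$ is a cross edge. It is therefore a $K_{1,s}$ in $G[A]$ or $G[B]$, contradicting the first step. So the cross pair is complete.

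Hence $G=G[A]\vee G[B]$ with both parts of maximum degree at most $s-1$. A graph of maximum degree $\Delta$ has VC-dimension bounded in terms of $\Delta$, because a shattered set of size $d$ requires a vertex adjacent to all of it, so $d\le\Delta$. \Cref{fact:VC-of-join} then gives $\VCdim(G)\le s$, as required. The main obstacle I expect is the bipartition lemma for $K_{1,s,t}$, and verifying that the star it produces never uses the added edge. Stars inside one side are automatically non-cross, so the real work is checking the case analysis at the boundary $t=2s-2$.
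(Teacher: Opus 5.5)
Your main case $s\ge2$ follows the paper's proof: \Cref{lem:AES-almost-partite} at density $2/5$, the cross density above $\frac{s}{s+1}$ making $G[A]$ and $G[B]$ $K_{1,s}$-free, the bipartition property of $K_{1,s,t}$ for $t\ge 2s-2$, maximality forcing the pair $(A,B)$ to be complete, and \Cref{fact:VC-of-join}. Your edge-by-edge use of maximality is equivalent to the paper's completing all cross pairs at once.

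There is one error in your sketch of the bipartition lemma. Testing whether the side of the singleton $z$ holds $s$ vertices ``from a single part'' is too weak. Your claim that the star is then centred on the opposite side is false at the boundary. Take $t=2s-2$, and suppose $z$'s side holds exactly $s-1$ vertices of the $t$-part and at least one vertex of the $s$-part. Then the opposite side has at most $s-1$ vertices in each part and contains no $K_{1,s}$. The star is centred at $z$ and uses vertices from both parts.

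The clean case split is as follows. First, if $z$'s side holds at least $s$ vertices of the $s$- and $t$-parts in total, $z$ is the centre of a star. Otherwise the other side holds at least $t+1$ such vertices. Then either it has $s$ vertices of the $t$-part, and the centre lies in the $s$-part. Or it has all of the $s$-part, using $t\ge2s-2$, plus at least one vertex of the $t$-part, and the centre lies in the $t$-part.

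The genuine gap is the case $s=1$. Showing that $G$ is ``close to triangle-free'', in the sense that deleting $o(n^2)$ edges kills all triangles, gives you nothing you can apply the Brandt/\L{}uczak--Thomass\'e result to. That result concerns maximal triangle-free graphs. Deleting edges destroys maximality and changes neighbourhoods, so no VC bound transfers back to $G$. Also, \Cref{lem:AES-almost-partite} and Andr\'asfai--Erd\H{o}s--S\'os need density above $2/5$, so they are unavailable at $1/3$.

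The missing idea is that $G$ is exactly triangle-free. If $xyz$ were a triangle, then summing $|N(u)\cap N(v)|$ over its three edges gives at least $d(x)+d(y)+d(z)-n>3\varepsilon n$. So some edge has at least $\varepsilon n\ge t$ common neighbours, which gives a $K_{1,1,t}$. Moreover, $G$ is maximal triangle-free: every edge of $K_{1,1,t}$ lies in a triangle, so adding any non-edge $e$ that creates a $K_{1,1,t}$ also creates a triangle through $e$. Only with these two facts can you invoke $\delta_{\mathrm{VC}}(K_3)=1/3$.
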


It is easy to see that the above four theorems together give Theorem
\ref{thm:complete results on tripartite graphs on VC}.
Note that trivially $\delta_{\mathrm{VC}}(H) \leq \frac{1}{2}$ for every 3-chromatic graph $H$ (by the Erd\H{o}s-Stone theorem), so for Theorems \ref{thm:K(r,s,t) r>=2} and \ref{thm:K(1,s,t), = 1/2} we only need to prove the lower bound.
\begin{proof}[Proof of Theorem \ref{thm:K(r,s,t) r>=2}]
For each $d \geq 1$ and $\varepsilon > 0$, we need to construct an $n$-vertex maximal $K_{r,s,t}$-free  graph $G$ with VC-dimension at least $d$ and
$\delta(G) \geq (\frac{1}{2} - \varepsilon) n$. To this end, start with an $m \times m$ bipartite graph $F$ with VC-dimension at least $d$ (where $m$ is an integer depending on $d$). For convenience, identify both parts of $F$ with the set $[m]$. Take disjoint sets
$\{a_i\},B_i,C_i,D_i$, $i = 1,\dots,m$, where
$|B_i| = s$,
$|C_i| = r-1$ and $|D_i| = t$. Add all edges between $\{a_i\}$ and $B_i$ and between $C_i$ and $D_i$ for each $1 \leq i \leq m$. For convenience, set
$A := \{a_1,\dots,a_m\}$,
$B := \bigcup_{i=1}^m B_i$,
$C := \bigcup_{i=1}^m C_i$ and
$D := \bigcup_{i=1}^m D_i$.
Thus, $A \cup B$ induces a disjoint union of copies of $K_{1,s}$, and $C \cup D$ induces a disjoint union of copies of $K_{r-1,t}$.
We now define the edges between the sets $A \cup B$ and $C \cup D$, as follows. For each $i \in [m]$, fix a vertex $d_i \in D_i$ (arbitrarily).
\begin{itemize}
    \item For each pair $(i,j) \in [m]^2$ with
    $ij \notin E(F)$, add the complete bipartite graphs
    $$
    (B_i,C_j),(B_i,D_j),(a_i,D_j \setminus \{d_j\}).
    $$
    \item For each $(i,j) \in [m]^2$ with
    $ij \in E(F)$, add the edge $a_id_j$.
\end{itemize}
These are all the edges between $A \cup B$ and $C \cup D$.
Denote the resulting graph by $G_0$.
Note that for all $ij \notin E(F)$, the set
$\{a_i\} \cup B_i \cup C_j \cup D_j$ induces a copy of $K_{r,s,t} - e$, with the missing edge being $a_id_j$; the parts of this $K_{r,s,t}$ are $\{a_i\} \cup C_j$ (of size $r$), $B_i$ (of size $s$), and $D_j$ (of size $t$). This means that adding the edge $a_id_j$ would create a copy of $K_{r,s,t}$. Secondly, observe that for $i,j \in [m]$, we have that $a_id_j \in E(G_0)$ if and only if $(i,j) \in E(F)$. Thus, the defined graph contains a bi-induced copy of $F$, and hence has VC-dimension at least $d$.

Next, add two sets $X,Y$ of size $N$ with $N \gg m,1/\varepsilon$, and add the complete bipartite graphs
$$
(X, C \cup D), (Y, A \cup B), (X,Y).
$$
The resulting graph, denoted $G'$, has $n := 2N + m(r+s+t)$ vertices and
$\delta(G') \geq N \geq (\frac{1}{2} - \varepsilon)n$, provided that $N$ is large enough in terms of $m,\varepsilon$.
The main claim is as follows:
\begin{claim}
    $G'$ is $K_{r,s,t}$-free.
\end{claim}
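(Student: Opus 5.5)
The plan is to show that any copy $K$ of $K_{r,s,t}$ in $G'$ must avoid $X\cup Y$, and then to rule out copies inside $G_0$ by looking at the triangles of $G_0$. Throughout, write the parts of $K$ as $P_1,P_2,P_3$, and recall that every part has size at least $r\ge 2$.

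\emph{Step 1: $K$ avoids $X\cup Y$.} Suppose $K$ contains some $x\in X$. Then the two parts of $K$ not containing $x$ lie in $N(x)=Y\cup C\cup D$ and span a complete bipartite graph with both sides of size at least $r\ge2$. Since $Y$ is independent and has no neighbours in $C\cup D$, no vertex of $Y$ can lie in such a connected $K_{a,b}$ with $a,b\ge 2$. So this $K_{a,b}$ lies in $G_0[C\cup D]$, which is a disjoint union of copies of $K_{r-1,t}$. Hence one side of it lies in some $C_j$, which has size $r-1<r$, a contradiction. Symmetrically, if $K$ contains some $y\in Y$, the other two parts lie in $X\cup A\cup B$. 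Here $X$ is independent and anticomplete to $A\cup B$, and $G_0[A\cup B]$ is a disjoint union of stars $K_{1,s}$, which contain no $K_{2,2}$. Thus $K\subseteq G_0$.

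\emph{Step 2: triangles of $G_0$.} The sets $A,B,C,D$ are independent and there are no $A$--$C$ edges. Listing the edges, the only triangles are of two types:
\begin{itemize}
\item $a_i\,b\,d$ with $b\in B_i$, $d\in D_j\setminus\{d_j\}$ and $ij\notin E(F)$;
\item $b\,c\,d$ with $b\in B_i$, $c\in C_j$, $d\in D_j$ and $ij\notin E(F)$.
\end{itemize}
So every triangle has exactly one vertex in each of $B$, $D$ and $A\cup C$.

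Fix a transversal triangle $uvw$ of $K$ with $u\in P_1\cap(A\cup C)$, $v\in P_2\cap B$ and $w\in P_3\cap D$. Replacing $u$ by any $u'\in P_1$ gives the triangle $u'vw$, so $P_1\subseteq A\cup C$. Replacing $v$ by any $v'\in P_2$ gives the triangle $uv'w$, which already has its $A\cup C$-vertex and its $D$-vertex, so $P_2\subseteq B$. In the same way $P_3\subseteq D$.

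\emph{Step 3: pinning down $P_1$ and the contradiction.} Two distinct vertices $a_i,a_{i'}$ have no common neighbour in $B$, so $|P_1\cap A|\le 1$. Every vertex of $C_j$ has all its $D$-neighbours in $D_j$, and $C_j,C_{j'}$ have no common $D$-neighbours for $j\ne j'$; since all of $P_1\cap C$ is complete to $P_3$, we get $P_1\cap C\subseteq C_j$ for a single $j$, so $|P_1\cap C|\le r-1$. As $|P_1|\ge r$, this forces $|P_1|=r$ and $P_1=\{a_i\}\cup C_j$ for some $i,j$.

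Then $P_2\subseteq N_B(a_i)=B_i$, so $|P_2|\le s$, and therefore $|P_2|=s$ and $|P_3|=t$ (when $s=t$ this holds either way). Now $P_3$ lies in the common $D$-neighbourhood of $a_i$ and $C_j$. This common neighbourhood is $D_j\setminus\{d_j\}$, of size $t-1$, if $ij\notin E(F)$, and is $\{d_j\}$ if $ij\in E(F)$. In both cases it has fewer than $t$ vertices, a contradiction.

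The main obstacle I expect is Step 2: the case analysis has to be organized so that the "missing edge" $a_id_j$ is exactly what blocks the near-copy $\{a_i\}\cup B_i\cup C_j\cup D_j$. The triangle-type observation is what makes this clean.
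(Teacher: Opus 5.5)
Your proof is correct. Step~1 is essentially the paper's first step; after that you take a genuinely different route.

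The paper localizes the copy to a single block pair. Two vertices from distinct blocks $\{a_i\}\cup B_i$ and $\{a_{i'}\}\cup B_{i'}$ are nonadjacent, so they lie in the same part. Yet their common neighbourhood lies in $C\cup D\cup Y$, which contains no $K_{r,s}$, and the same holds symmetrically on the $C\cup D$ side. Hence the copy lies in $\{a_i\}\cup B_i\cup C_j\cup D_j$, a set of exactly $r+s+t$ vertices. A spanning copy is then excluded by using that the endpoints of every non-edge must share a part.

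You instead observe that among the four independent classes the only type-pair with no edges is $AC$. So every triangle of $G_0$ meets $B$, $D$ and $A\cup C$ exactly once. Since every transversal triple of the copy is a triangle, this aligns the three parts with $A\cup C$, $B$, $D$ in one stroke. The counts $|P_1\cap A|\le 1$ and $|P_1\cap C|\le r-1$ then force $P_1=\{a_i\}\cup C_j$, leaving at most $t-1$ common $D$-neighbours available for $P_3$.

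Your argument is shorter and avoids both the localization and the spanning-copy analysis. However, it depends on the blocks being bipartite with no $A$--$C$ edges. The paper's template has the advantage of carrying over almost verbatim to the proof of Theorem~\ref{thm:K(1,s,t), = 1/2}. There the left blocks are copies of $K_{1,1,s-2}$, triangles occur inside blocks, and a triangle-type alignment of this kind is no longer available.
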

\begin{poc}
    Suppose that $G'$ contains a copy of $K_{r,s,t}$ with parts
    $R,S,T$, and put $V:=R\cup S\cup T$.  The neighborhood of every
    vertex of $K_{r,s,t}$ contains a copy of $K_{r,s}$.  For $x\in X$,
    however, $G'[N(x)]$ is a disjoint union of copies of $K_{r-1,t}$ and
    isolated vertices; for $y\in Y$, it is a disjoint union of copies of
    $K_{1,s}$ and isolated vertices.  Both graphs are $K_{r,s}$-free, so
    $V\cap(X\cup Y)=\emptyset$.

    Put $L_i:=\{a_i\}\cup B_i$ and $R_j':=C_j\cup D_j$.  We claim that
    $V$ meets at most one set $L_i$.  Indeed, if
    $u\in L_i\cap V$ and $v\in L_j\cap V$ with $i\ne j$, then $uv$ is a
    non-edge, so $u$ and $v$ lie in the same part of the copy.  Their
    common neighborhood inside the copy therefore contains $K_{r,s}$.
    On the other hand,
    $N_{G'}(u)\cap N_{G'}(v)\subseteq C\cup D\cup Y$, whose induced graph
    is a disjoint union of copies of $K_{r-1,t}$ and isolated vertices, a
    contradiction.  The same argument, with the two sides interchanged,
    shows that $V$ meets at most one set $R_j'$: the common neighborhood
    of vertices from two distinct such blocks is contained in
    $A\cup B\cup X$, a disjoint union of copies of $K_{1,s}$ and isolated
    vertices.

    Hence $V\subseteq L_i\cup R_j'$ for some $i,j\in[m]$.  This set has
    exactly $r+s+t$ vertices, so any copy of $K_{r,s,t}$ in it would use
    every vertex.  In any such spanning copy, the endpoints of every
    non-edge of the host must lie in the same part.  If $ij\notin E(F)$,
    the induced graph is $K_{r,s,t}$ with the single edge $a_id_j$
    deleted.  Each of the three original partite classes must lie in one
    part, while the additional non-edge $a_id_j$ forces the classes
    containing $a_i$ and $d_j$ into the same part; their union has more
    than $t$ vertices, which is impossible.  If $ij\in E(F)$, there is
    only one edge between $L_i$ and $R_j'$.  The graph of non-edges across
    this cut is connected, so all vertices would have to lie in one part,
    again impossible.  Thus $G'$ is $K_{r,s,t}$-free.
\end{poc}
Add edges to $G'$ until the resulting graph, denoted $G$, is maximal $K_{r,s,t}$-free. Then clearly $\delta(G) \geq \delta(G') \geq
(\frac{1}{2}-\varepsilon)n$. Also, we saw that the edges $a_id_j$ for $ij \notin E(F)$ cannot be added without creating a $K_{r,s,t}$. Hence,
$\{a_1,\dots,a_m\},\{d_1,\dots,d_m\}$ still form a bi-induced copy of $F$ in $G$, hence 
$\VCdim(G) \geq d$. This completes the proof of the theorem.
\end{proof}

\begin{proof}[Proof of Theorem \ref{thm:K(1,s,t), = 1/2}]
The proof idea is similar to that of Theorem \ref{thm:K(r,s,t) r>=2}. Given any $d \geq 1$ and $\varepsilon > 0$, let $F$ be an $m \times m$ bipartite graph with VC-dimension at least $d$, where $m$ is bounded above by a function of $d$. We identify both sides of $F$ with $[m]$. Take disjoint sets
$\{a_i\},\{b_i\},C_i,D_i,E_i$ for $i \in [m]$, where
$|C_i| = s-2$, $|D_i| = s-1$ and $|E_i| = t-s+2$.
For convenience, put $A := \{a_1,\dots,a_m\}$, $B = \{b_1,\dots,b_m\}$, $C = \bigcup_{i=1}^m C_i$,
$D = \bigcup_{i=1}^m D_i$ and $E = \bigcup_{i=1}^m E_i$.

For each $i \in [m]$, add the complete tripartite graph $(a_i,b_i,C_i)$ and the complete bipartite graph $(D_i,E_i)$. Thus, $A \cup B \cup C$ is a disjoint union of copies of $K_{1,1,s-2}$, and $D \cup E$ is a disjoint union of copies of $K_{s-1,t-s+2}$. We now define the edges between the sets
$A \cup B \cup C$ and $D \cup E$. For each $i \in [m]$, fix a vertex $e_i \in E_i$ (arbitrarily).
\begin{itemize}
    \item For each pair $(i,j) \in [m]^2$ with
    $ij \notin E(F)$, add the complete bipartite graphs
    $$
    (b_i,E_j),(C_i,D_j),(a_i,D_j),(a_i,E_j \setminus \{e_j\}).
    $$
    \item For each $(i,j) \in [m]^2$ with
    $ij \in E(F)$, add the edge $a_ie_j$.
\end{itemize}
Denote the resulting graph by $G_0$.
Note that for all $ij \notin E(F)$, the set
$\{a_i,b_i\} \cup C_i \cup D_j \cup E_j$ induces a copy of $K_{1,s,t} - e$, with parts $\{a_i\}$, $\{b_i\} \cup D_j$ (of size $s$) and $C_i \cup E_j$ (of size $t$), and with missing edge $a_ie_j$. Also, for $i,j \in [m]$ we have that $a_ie_j \in E(G_0)$ if and only if $ij \in E(F)$, meaning that
$\{a_1,\dots,a_m\},\{e_1,\dots,e_m\}$ give a bi-induced copy of $F$ in $G_0$.

To increase the minimum degree, we again add two sets $X,Y$ of size $N$ and the complete bipartite graphs
$$
(X, D \cup E), (Y, A \cup B \cup C), (X,Y).
$$
The resulting graph $G'$ has $n := 2N + m(1+s+t)$ vertices and minimum degree $\delta(G') \geq (\frac{1}{2}-\varepsilon)n$, provided that $N$ is large enough as a function of $m,1/\varepsilon$.
\begin{claim}\label{claim:K(1,s,t)-free}
    $G'$ is $K_{1,s,t}$-free.
\end{claim}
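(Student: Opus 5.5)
The plan is to follow the proof of the analogous claim in Theorem \ref{thm:K(r,s,t) r>=2} closely. Suppose $G'$ contains a copy of $K_{1,s,t}$ with vertex set $V$. Every vertex of this copy has a copy of $K_{1,s}$ in its neighborhood, since it sees two full parts of sizes at least $1$ and $s$. So I will check, in order:
\begin{enumerate}[(a)]
\item the copy avoids the reservoirs $X\cup Y$;
\item it meets at most one block on each side of the coding part;
\item the single remaining configuration on $1+s+t$ vertices contains no spanning copy.
\end{enumerate}
Throughout, write $L_i:=\{a_i,b_i\}\cup C_i$ and $R_j:=D_j\cup E_j$.

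For step (a), take $x\in X$. Then $G'[N(x)]$ is a disjoint union of copies of $K_{s-1,t-s+2}$ together with the independent set $Y$. For $y\in Y$, $G'[N(y)]$ is a disjoint union of copies of $K_{1,1,s-2}$ together with the independent set $X$. If $x\in V$, its neighborhood inside the copy contains $K_{s,t}$ (if $x$ is the singleton) or at least $K_{1,s}$ (otherwise), and such a connected piece would have to sit inside one component. A component $K_{s-1,t-s+2}$ has no vertex of degree at least $s$, since $t-s+2\le s-1$ when $t\le 2s-3$; so it contains no $K_{1,s}$. A component $K_{1,1,s-2}$ has maximum degree $s-1$, so it also contains no $K_{1,s}$. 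Hence $V\cap(X\cup Y)=\emptyset$.

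For step (b), take $u\in L_i\cap V$ and $v\in L_{i'}\cap V$ with $i\ne i'$. They are nonadjacent, so they lie in the same part $P$ of the copy, and $P$ is not the singleton part. Their common neighborhood within the copy therefore contains the complete bipartite graph between the other two parts, and in particular a $K_{1,s}$. But $N(u)\cap N(v)\subseteq D\cup E\cup Y$, whose induced graph is a disjoint union of copies of $K_{s-1,t-s+2}$ plus isolated vertices. As shown above, this graph is $K_{1,s}$-free precisely because $t\le 2s-3$; this is the one place where the hypothesis is used. Symmetrically, vertices from two distinct blocks $R_j$ have common neighborhood inside $A\cup B\cup C\cup X$, which is a disjoint union of copies of $K_{1,1,s-2}$ and isolated vertices, again $K_{1,s}$-free. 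Thus $V\subseteq L_i\cup R_j$ for some $i,j$. This set has exactly $1+s+t$ vertices, so the copy would be spanning.

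Step (c) uses one observation: the parts of a spanning copy are unions of connected components of the complement graph $\overline{G'}[L_i\cup R_j]$.
\begin{itemize}
\item If $ij\notin E(F)$, then $G'[L_i\cup R_j]$ is $K_{1,s,t}-a_ie_j$ with classes $\{a_i\}$, $\{b_i\}\cup D_j$ and $C_i\cup E_j$. The non-edge $a_ie_j$ merges $\{a_i\}$ with $C_i\cup E_j$. This leaves complement components of sizes $s$ and $t+1$ only, which cannot be grouped into parts of sizes $1,s,t$.
\item If $ij\in E(F)$, the only cross edge is $a_ie_j$. Since $|L_i|=s\ge 2$ and $|R_j|=t+1\ge2$, the complete bipartite complement across the cut minus one edge is connected. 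So all vertices would have to lie in a single part, which is impossible.
\end{itemize}

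The main point of care is step (b), and specifically the exact verification that $K_{s-1,t-s+2}$ is $K_{1,s}$-free. This is where the range $t\le 2s-3$ enters, and it is also why the same gadget fails for larger $t$.
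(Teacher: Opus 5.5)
Your proof is correct and follows the paper's argument step for step: you exclude $X\cup Y$ because the relevant neighborhoods are $K_{1,s}$-free, confine the copy to a single $L_i\cup R_j$ via common neighborhoods, and rule out a spanning copy using the components of the non-edge graph. One small slip in your commentary: the hypothesis $t\le 2s-3$ is used in step (a) (for $x\in X$) as well as in step (b), so step (b) is not "the one place" it enters.
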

\begin{poc}
    Suppose that $G'$ contains a copy of $K_{1,s,t}$ with vertex set
    $V$.  The neighborhood of every vertex of $K_{1,s,t}$ contains a
    copy of $K_{1,s}$.  If $x\in X$, then $G'[N(x)]$ is a disjoint union
    of copies of $K_{s-1,t-s+2}$ and isolated vertices; its maximum degree
    is at most $s-1$ because $t\le2s-3$.  If $y\in Y$, then $G'[N(y)]$ is
    a disjoint union of copies of $K_{1,1,s-2}$ and isolated vertices,
    again of maximum degree at most $s-1$.  Hence
    $V\cap(X\cup Y)=\emptyset$.

    Put $L_i:=\{a_i,b_i\}\cup C_i$ and $R_j':=D_j\cup E_j$.  If
    $u\in L_i\cap V$ and $v\in L_j\cap V$ with $i\ne j$, then $uv$ is a
    non-edge, so $u,v$ lie in the same part of the copy and their common
    neighborhood contains $K_{1,s}$.  But
    $N_{G'}(u)\cap N_{G'}(v)\subseteq D\cup E\cup Y$, whose induced graph
    is a disjoint union of copies of $K_{s-1,t-s+2}$ and isolated
    vertices.  This is impossible.  Thus $V$ meets at most one left
    block $L_i$.  Similarly, vertices from two distinct right blocks
    $R_i',R_j'$ have common neighborhood contained in
    $A\cup B\cup C\cup X$, a disjoint union of copies of $K_{1,1,s-2}$
    and isolated vertices.  Hence $V$ meets at most one right block.

    Therefore $V\subseteq L_i\cup R_j'$ for some $i,j\in[m]$.  This set
    has exactly $1+s+t$ vertices.  In a spanning complete tripartite copy,
    every connected component of the host's non-edge graph lies within a
    single part.  If $ij\notin E(F)$, the induced graph is $K_{1,s,t}$
    with the edge $a_ie_j$ deleted.  The non-edges inside the $t$-vertex
    class, together with $a_ie_j$, force the singleton and the entire
    $t$-class into one part, which is impossible.  If $ij\in E(F)$, the
    only edge between $L_i$ and $R_j'$ is $a_ie_j$; the non-edge graph
    across the cut is connected, so all vertices would have to lie in one
    part.  This is again impossible.  Thus $G'$ is $K_{1,s,t}$-free.
\end{poc}
As in the proof of Theorem \ref{thm:K(r,s,t) r>=2}, let $G$ be a maximal $K_{1,s,t}$-free supergraph of $G'$. We have $\VCdim(G) \geq d$, and
$\delta(G) \geq (\frac{1}{2}-\varepsilon)n$, as required.
\end{proof}
\begin{proof}[Proof of Theorem \ref{thm:K(1,s,t), < 1/2}]
Our goal is to construct, for any given $d \geq 1$ and $\varepsilon > 0$, an $n$-vertex maximal $K_{1,s,t}$-free graph $G$ with
$\delta(G) \geq (\frac{s}{2s+1} - \varepsilon)n$ and $\VCdim(G) \geq d$. Let $F$ be an $m \times m$ bipartite graph with $\VCdim(F) \geq d$, where $m$ depends only on $d$. We identify both sides of $F$ with $[m]$.
Take disjoint sets $A_i,B_i$, $i = 1,\dots,m$, where $|A_i| = s+1$ and $|B_i| = t$. Write $A_i = \{a_i^0,\dots,a_i^s\}$.
For convenience, put $A := \bigcup_{i=1}^m A_i$ and
$B := \bigcup_{i=1}^m B_i$.
Also, add disjoint sets $X,Y_0,\dots,Y_{s}$ with
$|X| = sN$ and $|Y_i| = N$ for all $0 \leq i \leq s$, where $N \gg m,1/\varepsilon$. Put
$Y := \bigcup_{i=0}^{s}Y_i$.
We now define a graph $G'$ on the vertex-set
$
A \cup B \cup X \cup Y,
$
as follows. For each $i \in [m]$, fix a vertex $b_i \in B_i$ (arbitrarily). We note that the construction here is somewhat different from those used in the proofs of Theorems \nolinebreak \ref{thm:K(r,s,t) r>=2} \nolinebreak and \nolinebreak \ref{thm:K(1,s,t), = 1/2}.
\begin{itemize}
    \item Add the complete bipartite graphs $(X,B),(X,Y)$.
    \item For every $i \in [m]$ and all $0 \leq j,k \leq s$, add the complete bipartite graph $(a_i^j,Y_k)$ if and only if $j \neq k$. This makes sure that there is no $y \in Y$ which is adjacent to all vertices of $A_i$. Also,
    $d_Y(a) = sN$ for all $a \in A$.
    \item For each $i \in [m]$, add the complete tripartite graph
    $(a_i^0, A_i \setminus \{a_i^0\}, B_i \setminus \{b_i\})$. Thus, the
    $A_i \cup \nolinebreak (B_i \setminus \{b_i\})$ induces a $K_{1,s,t-1}$.
    \item For all $(i,j) \in [m]^2$, if $ij \notin E(F)$ then add the complete bipartite graph
    $(A_i \setminus \{a_i^0\},b_j)$, and if
    $ij \in E(F)$ then add the edge $a_i^0b_j$.
\end{itemize}
The resulting graph is $G'$.
We have $n := |V(G')| = (2s+1)N + m(1+s+t)$ and
$\delta(G') \geq sN \geq (\frac{s}{2s+1} - \varepsilon)n$, provided that $N \gg m,1/\varepsilon$.
Next, note that for all $(i,j) \in [m]^2$, we have that $a_i^0b_j \in E(G')$ if and only if $ij \in E(F)$.
Also, observe that for each $(i,j) \notin E(F)$, adding the edge $a_i^0b_j$ results in a copy of $K_{1,s,t}$ with parts $\{a_i^0\}$, $A_i \setminus \{a_i^0\}$ (of size $s$) and $(B_i \setminus \{b_i\}) \cup \{b_j\}$ (of size $t$). This means that in every $K_{1,s,t}$-free supergraph $G$ of $G'$, the vertices
$\{a_1^0,\dots,a_m^0\},\{b_1,\dots,b_m\}$ form a bi-induced copy of $F$, and hence
$\VCdim(G) \geq d$.
\begin{claim}
    $G'$ is $K_{1,s,t}$-free.
\end{claim}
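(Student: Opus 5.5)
The plan is to show that no vertex of $G'$ can play any role in a copy of $K_{1,s,t}$. I use two local necessary conditions. First, in $K_{1,s,t}$ every vertex $w$ has a $K_{1,s}$ inside its neighborhood. If $w$ is the apex, its neighborhood is $K_{s,t}\supseteq K_{1,s}$. If $w$ lies in the $s$-class, its neighborhood contains the apex joined to the $t$-class, a $K_{1,t}\supseteq K_{1,s}$. If $w$ lies in the $t$-class, its neighborhood contains the apex joined to the $s$-class, a $K_{1,s}$. Second, in particular, every vertex of $K_{1,s,t}$ lies in a triangle. So it suffices to exclude, one class at a time, all vertices of $G'$ whose neighborhoods are too poor.

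\emph{Step 1: $X\cup Y$.} For $x\in X$ we have $N(x)=B\cup Y$. The set $B$ is independent: $B_i\setminus\{b_i\}$ is one class of the tripartite graph, and each $b_j$ has neighbours only in $A\cup X$. The set $Y$ is independent, and there are no $B$--$Y$ edges. Hence $N(x)$ is independent, so $x$ lies in no triangle and cannot be used.

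For $y\in Y_k$ we have $N(y)=X\cup\{a_i^j: j\ne k\}$. Here $X$ has no neighbours in $A$, and the only edges inside $A$ are those from $a_i^0$ to $A_i\setminus\{a_i^0\}$. Hence $G'[N(y)]$ is a disjoint union of isolated vertices together with stars $K_{1,s-1}$ (when $k\neq0$) or no edges at all (when $k=0$). Either way its maximum degree is at most $s-1$, so it contains no $K_{1,s}$, and $y$ cannot be used. Thus any copy lies inside $G'[A\cup B]$.

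\emph{Step 2: the vertices $b_j$.} Inside $A\cup B$, the neighbourhood of $b_j$ is $\{a_i^0: ij\in E(F)\}\cup\bigcup_{ij\notin E(F)}(A_i\setminus\{a_i^0\})$. Adjacencies inside $A$ occur only between $a_i^0$ and $A_i\setminus\{a_i^0\}$ for the same $i$. For each fixed $i$, however, $b_j$ sees exactly one of these two sets, depending on whether $ij\in E(F)$. Different blocks $A_i$ are not joined, so $N(b_j)\cap(A\cup B)$ is independent. Hence no $b_j$ lies in a triangle of $G'[A\cup B]$, and all $b_j$ are excluded.

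\emph{Step 3: what remains.} After removing $X$, $Y$ and all $b_j$, the remaining graph is the disjoint union over $i$ of the blocks $A_i\cup(B_i\setminus\{b_i\})$. Each block induces $K_{1,s,t-1}$ and has only $s+t<1+s+t$ vertices. Since $K_{1,s,t}$ is connected, a copy would have to lie in a single block, which is too small. This gives the claim.

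The only delicate step is Step 2, namely checking that the two kinds of cross-edges at $b_j$ (to $a_i^0$ for $ij\in E(F)$, to $A_i\setminus\{a_i^0\}$ for $ij\notin E(F)$) never combine into a triangle. This holds because, for each $i$, exactly one of the two alternatives occurs.
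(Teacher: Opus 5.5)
Your argument is correct, but it is organised differently from the paper's. The paper uses only the apex characterisation: it runs through each vertex type ($X$, $Y_0$, $Y_k$ with $k\ge1$, $b_j$, $B_j\setminus\{b_j\}$, $A_i\setminus\{a_i^0\}$, $a_i^0$) and checks that $G'[N(v)]$ is $K_{s,t}$-free.

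You instead use necessary conditions valid for every role in $K_{1,s,t}$: a $K_{1,s}$ in the neighbourhood, hence a triangle through the vertex (for the $s$-class this needs $t\ge s$). With these you discard $X$, $Y$ and the linking vertices $b_j$. You then finish by observing that what remains is a disjoint union of $K_{1,s,t-1}$ blocks on $s+t$ vertices each.

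Your computations for $X$, $Y$ and $b_j$ are the same as the paper's. Your last step, however, replaces its three cases for the gadget vertices with a one-line connectivity and counting argument. Those three cases include the most delicate one at $a_i^0$, where one must see that $A_i\setminus\{a_i^0\}$ has no common neighbour in $Y$ and only $t-1$ in $B_i\setminus\{b_i\}$. They also include the small subcases $s=1$ and $s=t=1$.

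The rule that $a_i^j$ is not joined to $Y_j$ is essential in both proofs. In the paper it is what is needed at the apex $a_i^0$; in yours it is what caps the maximum degree of $G'[N(y)]$ at $s-1$.

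Your route (peel off the reservoirs, then localise to blocks) is close in spirit to how the paper handles the two preceding $\delta_{\mathrm{VC}}=\frac12$ constructions. The apex check is more mechanical, but it needs no statement about non-apex roles.
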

\begin{poc}
    We need to check that for each $v \in V(G')$, $N_{G'}(v)$ is $K_{s,t}$-free. We consider the following cases:
    \begin{itemize}
        \item If $v \in X$ then
        $N_{G'}(v) = B \cup Y$ is independent.
        \item Suppose that $v \in Y$. If $v \in Y_0$ then $N_{G'}(v)$ is independent, and if $v \in Y_i$ for some $i \in [s]$ then $N_{G'}(v)$ consists of isolated vertices and disjoint copies of $K_{1,s-1}$, hence is $K_{s,t}$-free.
        \item Suppose that $v \in B_j \setminus \{b_j\}$ for some $j \in [m]$. Then $N_{G'}(v)$ consists of isolated vertices and one $K_{1,s}$-copy (induced by $A_j$). Thus, $N_{G'}(v)$ is $K_{s,t}$-free unless $s=t=1$. However, in that case $B_j \setminus \{b_j\} = \emptyset$ (as $|B_j| = t$), so this case is impossible.
        \item Suppose that $v = b_j$ for some
        $j \in [m]$. Then $N_{G'}(v)$ is independent, because for each $i \in [m]$, $v = b_j$ cannot have neighbors in both of the sets
        $\{a_i^0\},A_i \setminus \{a_i^0\}$.
        \item Suppose that
        $v \in A_i \setminus \{a_i^0\}$ for some
        $i \in [m]$;
        say $v = a_i^j$ for $j \in [s]$.
        Then $N_{G'}(v)$ consists of isolated vertices and a star with center $a_i^0$ and leaf-set
        $(B_i \setminus \{b_i\}) \cup \bigcup_{k \neq 0,j}Y_k$.
        Hence, if $s \geq 2$ then $N_{G'}(v)$ is clearly $K_{s,t}$-free. And if $s = 1$, then $j=1$ and the union $\bigcup_{k \neq 0,j}Y_k$ is empty (since there is no such index $k \in \{0,\dots,s\} = \{0,1\}$). Therefore, the star in $N_{G'}(v)$ has $t-1$ leaves, and hence $N_{G'}(v)$ is $K_{1,t}$-free.
        \item Suppose that $v = a_i^0$ for some $i \in [m]$.
        The only edges in $N_{G'}(v)$ are between $A_i \setminus \{a_i^0\}$ and
        $(B_i \setminus \{b_i\}) \cup \bigcup_{k=1}^s Y_k$. In particular, $N_{G'}(v)$ induces a bipartite graph with one part (namely $A_i \setminus \{a_i^0\}$) of size $s$.
        Hence, in order for $N_{G'}(v)$ to contain $K_{s,t}$, the vertices in
        $A_i \setminus \{a_i^0\}$ must have at least $t$ common neighbors. But these vertices have no common neighbors in $Y$, and
        $|B_i \setminus \{b_i\}| = t-1$. Hence, $N_{G'}(v)$ is $K_{s,t}$-free. \qedhere
    \end{itemize}
\end{poc}
Let $G$ be a maximal $K_{1,s,t}$-free supergraph of $G'$. Then $\delta(G) \geq (\frac{s}{2s+1} - \varepsilon)n$, and we already saw above that
$\VCdim(G) \geq d$, completing the proof.
\end{proof}

\begin{proof}[Proof of Theorem \ref{thm:K(1,s,t) upper bound}]
Let $\varepsilon > 0$, and let $G$ be a maximal $K_{1,s,t}$-free $n$-vertex graph with
$\delta(G) \geq ( \frac{s}{2s+1} + \varepsilon )n$. Our goal is to show that
$\VCdim(G) \leq O_{s,t,\varepsilon}(1)$, provided that $n \geq n_0(\varepsilon)$.

We handle separately the cases $s=1$ and $s\ge2$.  Suppose first that
$s=1$.  We claim that $G$ is triangle-free.  If $xyz$ were a triangle,
then
\[
\sum_{\{u,v\}\in\binom{\{x,y,z\}}2}|N(u)\cap N(v)|
\ge d(x)+d(y)+d(z)-n>3\varepsilon n,
\]
so some edge of the triangle would have at least $\varepsilon n\ge t$
common neighbours, giving $K_{1,1,t}$.  Moreover, $G$ is maximal
triangle-free: adding a missing edge and creating a $K_{1,1,t}$ would also
create a triangle containing that edge.  Since
$\delta_{\mathrm{VC}}(K_3)=1/3$~\cite{huang_interpolating_2025}, it follows
that $\VCdim(G)=O_{t,\varepsilon}(1)$.

Suppose now that $s\ge2$.  Since
$\delta(G)\ge(2/5+\varepsilon)n$, \Cref{lem:AES-almost-partite} gives a
partition $V(G)=A\cup B$ with
$\Delta(G[A]),\Delta(G[B])\le\varepsilon n/2$.  Consequently
$d_B(a),d_A(b)
\ge
\left(\frac{s}{2s+1}+\frac{\varepsilon}{2}\right)n$
for all $a\in A$ and $b\in B$.  Each of $A,B$ therefore has size at most
$(s+1)n/(2s+1)$, and hence every vertex has relative degree at least
$s/(s+1)+\varepsilon/2$ into the opposite part.  Thus any $s+1$ vertices
in either part have $\Omega(n)$ common neighbours in the other part.  It
follows that both $G[A]$ and $G[B]$ are $K_{1,s}$-free.

We use the following elementary observation: if $t\ge2s-2$, then every
bipartition of $V(K_{1,s,t})$ has a side containing $K_{1,s}$.  Indeed,
color the two sides red and blue.  If one color occurs only in the
$t$-vertex class, then the other color contains the singleton together
with the whole $s$-vertex class and hence contains $K_{1,s}$.  Otherwise,
each color used in the $t$-class also occurs outside it, so each has at
most $s-1$ vertices in the $t$-class.  Equality for both colors would
force the singleton and the $s$-vertex class to be monochromatic in
opposite colors, in which case a vertex of the $t$-class is the centre of
a monochromatic $K_{1,s}$.  Hence a bipartition with neither side
containing $K_{1,s}$ would imply $t\le2s-3$.

Let $G'$ be obtained from $G$ by completing the pair $(A,B)$.  The
observation and the $K_{1,s}$-freeness of $G[A]$ and $G[B]$ show that $G'$
is $K_{1,s,t}$-free.  Maximality therefore forces $G=G'$, so $G$ is the
join of $G[A]$ and $G[B]$.  Each side has VC-dimension at most $s-1$, and
\Cref{fact:VC-of-join} gives $\VCdim(G)\le s$.
\end{proof}

\section{Proof of Theorem \ref{thm:strict lowbd for delta_VC}}\label{sec:VC lower bound}
We need to recall the notion of the {\em core} of a graph.
\begin{definition}\label{def:core}
    The {\em core} of a graph $H$, denoted $\mathrm{core}(H)$, is the smallest subgraph $K$ of $H$ (in terms of the number of vertices) such that there is a homomorphism from $H$ to $K$.
\end{definition}

It is not hard to see that the core of a graph $H$ is unique up to isomorphism, i.e., that if two subgraphs $K_1,K_2$ of $H$ satisfy Definition \ref{def:core} then $K_1,K_2$ are isomorphic.

A graph $H$ is called a {\em core} if
$\mathrm{core}(H) = H$. The following three statements are well-known.
\begin{lemma}\label{lem:core}
For every graph $H$, the graph
$K := \mathrm{core}(H)$ is a core.
\end{lemma}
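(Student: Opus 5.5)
We must show that $K=\mathrm{core}(H)$ is itself a core, i.e.\ that $\mathrm{core}(K)=K$. The plan is a minimality argument using composition of homomorphisms.

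Since $K$ is a subgraph of itself, the identity map gives a homomorphism $K\to K$. Hence $\mathrm{core}(K)$ exists and has at most $v(K)$ vertices. Suppose for contradiction that $\mathrm{core}(K)\neq K$. Then there is a subgraph $K'\subseteq K$ with $v(K')<v(K)$ and a homomorphism $\psi:K\to K'$.

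By definition of $K$ there is a homomorphism $\phi:H\to K$. Composing, $\psi\circ\phi:H\to K'$ is a homomorphism, since compositions of homomorphisms preserve adjacency. Moreover $K'\subseteq K\subseteq H$, so $K'$ is a subgraph of $H$ admitting a homomorphism from $H$. But it has fewer vertices than $K$, contradicting the choice of $K$ as a smallest such subgraph of $H$.

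Therefore no proper smaller subgraph of $K$ receives a homomorphism from $K$. So $K$ is the smallest subgraph of itself with this property, i.e.\ $\mathrm{core}(K)=K$. Here I read ``$\mathrm{core}(K)=K$'' up to isomorphism, consistent with the remark on uniqueness. Concretely, the only subgraphs of $K$ with $v(K)$ vertices that $K$ maps to are spanning subgraphs $K''$. By the minimality just established, any subgraph of $K$ to which $K$ maps has exactly $v(K)$ vertices.

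The one subtle point, and the main potential obstacle, is making sure the notion ``is a core'' is interpreted by vertex count as in Definition~\ref{def:core}. If one wants literal equality rather than a spanning subgraph, one also checks that any homomorphism $K\to K''$ with $K''$ a spanning subgraph of $K$ is a bijection on vertices. It is then an injective map from the finite edge set of $K$ into that of $K''\subseteq K$, forcing $E(K'')=E(K)$ and hence $K''=K$. This edge-counting step settles the matter.
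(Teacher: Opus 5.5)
Your proof is correct and is the same argument as the paper's: composing a homomorphism $K\to K'$ onto a smaller subgraph with a homomorphism $H\to K$ contradicts the minimality of $\mathrm{core}(H)$. Your added step for spanning subgraphs $K''$ is also sound. Bijectivity on vertices follows from the minimality you just proved, and injectivity on the finite edge set then forces $K''=K$. This case, where a ``proper subgraph'' has the same vertex count, is one the paper's one-line proof leaves implicit.
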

\begin{proof}
    If not, then there is a homomorphism $\varphi$ from $K$ to a proper subgraph $K'$ of $K$. But then there is also a homomorphism from $H$ to $K'$ (obtained by composing $\varphi$ with a homomorphism from $H$ to $K$). However, this contradicts the minimality in the definition of $\mathrm{core}(H)$.
\end{proof}
\begin{lemma}\label{lem:core homomorphism poset}
Suppose that $H_1,H_2$ are cores and that there exist homomorphisms $\varphi : H_1 \rightarrow H_2$ and $\psi : H_2 \rightarrow H_1$. Then $H_1,H_2$ are isomorphic.
\end{lemma}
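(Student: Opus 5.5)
The plan is to show that $\psi\circ\varphi$ is an automorphism of $H_1$, which forces $\varphi$ to be a bijection on vertices. From there we upgrade $\varphi$ to an isomorphism by counting edges. Everything rests on one basic consequence of being a core: every homomorphism from a core $K$ to itself is bijective on $V(K)$.

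For this basic fact, let $f:K\to K$ be a homomorphism. The image $f(K)$, meaning the subgraph of $K$ with vertex set $f(V(K))$ and edge set $\{f(u)f(v):uv\in E(K)\}$, is a subgraph of $K$ that $K$ maps onto. Since $\mathrm{core}(K)=K$, no subgraph with fewer vertices admits a homomorphism from $K$. Hence $|f(V(K))|=|V(K)|$, and $f$ is a bijection on vertices. Being a bijection, $f$ maps $E(K)$ injectively into $E(K)$, so $f$ also permutes $E(K)$ because $E(K)$ is finite. In particular, $f$ is an automorphism.

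Apply this to $\psi\circ\varphi:H_1\to H_1$ and $\varphi\circ\psi:H_2\to H_2$. Both are bijections on vertices. It follows that $\varphi$ is injective on $V(H_1)$ and surjective onto $V(H_2)$, so $\varphi:V(H_1)\to V(H_2)$ is a bijection, and symmetrically so is $\psi$. Since $\varphi$ is injective and a homomorphism, it maps $E(H_1)$ injectively into $E(H_2)$, giving $e(H_1)\le e(H_2)$. The same argument for $\psi$ gives $e(H_2)\le e(H_1)$. Hence $\varphi$ maps $E(H_1)$ bijectively onto $E(H_2)$. So $\varphi$ is a vertex bijection that preserves both edges and non-edges, which makes it an isomorphism.

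The only delicate point is the minimality step in the basic fact. There one must note that the image of a homomorphism is genuinely a subgraph of $K$ receiving a homomorphism from $K$, so it is a competitor in the definition of the core. This is the same reasoning as in Lemma~\ref{lem:core}. The rest is finite counting.
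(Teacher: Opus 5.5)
Your proof is correct and follows essentially the same route as the paper: show that every endomorphism of a core is an automorphism, then apply this to $\psi\circ\varphi$ and $\varphi\circ\psi$ to conclude that $\varphi$ is a bijection on vertices. The only difference is in the last step, where the paper notes that $\varphi^{-1}=(\psi\circ\varphi)^{-1}\circ\psi$ is a homomorphism and you instead use the edge count $e(H_1)\le e(H_2)\le e(H_1)$; both arguments are valid.
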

\begin{proof}
    Every endomorphism of a finite core is an automorphism, since a
    non-surjective endomorphism would map the core to a proper subgraph.
    Hence both $\psi\circ\varphi$ and $\varphi\circ\psi$ are
    automorphisms.  In particular, $\varphi$ is bijective and
    $\varphi^{-1}=(\psi\circ\varphi)^{-1}\circ\psi$ is a homomorphism.
    Thus $\varphi$ is an isomorphism.
\end{proof}

\begin{lemma}\label{lem:core homomorphism}
Let $K$ be a core with $V(K) = [k]$. Let $G$ be a graph with vertex partition $V(G) = V_1 \cup \dots \cup V_k$ such that the map $V_i \mapsto i$ is a homomorphism from $G$ to $K$. Then every homomorphism $\varphi$ from $K$ to $G$ is injective, and
$|\mathrm{Im}(\varphi) \cap V_i| = 1$ for all $i \in [k]$.
\end{lemma}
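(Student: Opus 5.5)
Let $\pi : V(G) \to [k]$ denote the homomorphism $G \to K$ sending every vertex of $V_i$ to $i$. The plan is to compose: for any homomorphism $\varphi : K \to G$, the map $\pi\circ\varphi : K \to K$ is a homomorphism, i.e.\ an endomorphism of $K$. As recorded in the proof of \Cref{lem:core homomorphism poset}, every endomorphism of a finite core is an automorphism. Indeed, if an endomorphism were not surjective, it would be a homomorphism from $K$ into a proper subgraph of $K$, contradicting that $K$ is a core (\Cref{lem:core}, or directly the definition). Hence $\sigma := \pi\circ\varphi$ is a bijection of $[k]$.

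Both conclusions then follow from $\sigma$ being bijective. First, injectivity: if $\varphi(x)=\varphi(y)$, then $\sigma(x)=\sigma(y)$, so $x=y$. Second, the image meets each part exactly once. For $i\in[k]$, the set $\mathrm{Im}(\varphi)\cap V_i$ is exactly $\varphi(\sigma^{-1}(i))$. Since $\sigma^{-1}(i)$ is a single vertex, this set has size exactly one.

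The only step with content is the fact that an endomorphism of a finite core is surjective, hence bijective on the finite set $V(K)$. If needed, I would spell it out: the image of an endomorphism $\sigma$ spans a subgraph $K'$ of $K$. If $K'$ had fewer vertices than $K$, then the composition of the homomorphism $H\to K$ with $\sigma$ would contradict minimality. For $K$ itself, it is simplest to apply the definition with $H=K$: $K$ is the smallest subgraph admitting a homomorphism from $K$, so no such proper $K'$ exists. I do not expect any real obstacle; the argument is short once the composition $\pi\circ\varphi$ is considered.
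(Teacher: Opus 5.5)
Your proposal is correct and matches the paper's proof: both compose $\varphi$ with the projection $V_i\mapsto i$ to get an endomorphism of the core $K$, which must be a bijection, and read off injectivity of $\varphi$ and $|\mathrm{Im}(\varphi)\cap V_i|=1$ from that. Your explicit identification $\mathrm{Im}(\varphi)\cap V_i=\varphi(\sigma^{-1}(i))$ is a slightly more detailed version of the paper's final counting step. The detour through $H$ in your last paragraph is unnecessary, since applying the definition of core with $H=K$ is all you need.
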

\begin{proof}
    Let $p:G\to K$ be the homomorphism that maps $V_i$ to $i$.  Then
    $p\circ\varphi$ is an endomorphism of the core $K$, and hence an
    automorphism.  Therefore $\varphi$ is injective and the image of
    $p\circ\varphi$ meets every vertex of $K$.  Since $K$ has $k$
    vertices, $\mathrm{Im}(\varphi)$ meets each part $V_i$ exactly once.
\end{proof}

\noindent
Finally, we need the following simple graph-theoretic lemma.

\begin{lemma}\label{lem:orientation}
    Let $G$ be a graph and suppose that every connected component of $G$ contains a cycle. Then there is an orientation $D$ of a spanning subgraph of $G$ such that $d^+_D(v) = 1$
    for every $v \in V(G)$.
\end{lemma}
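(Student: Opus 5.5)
It suffices to treat each connected component separately: if every component has an orientation of a spanning subgraph with all out-degrees equal to $1$, the union of these orientations works for $G$. So assume $G$ is connected and contains a cycle $C$.

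The plan has two steps.

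\begin{enumerate}[(1)]
\item \emph{Orient the cycle.} Write $C = v_1v_2\cdots v_\ell v_1$ with $\ell \ge 3$. Orient each edge $v_iv_{i+1}$ from $v_i$ to $v_{i+1}$, indices taken modulo $\ell$. Every vertex of $C$ then has out-degree exactly $1$ using edges of $C$ alone.
\item \emph{Handle the remaining vertices.} Since $G$ is connected, choose a breadth-first search tree, or any spanning forest, rooted at the set $V(C)$. Concretely, for each vertex $v \notin V(C)$ let $\mathrm{dist}(v)$ be its graph distance to $V(C)$, which is finite by connectivity. Pick a neighbour $p(v)$ of $v$ with $\mathrm{dist}(p(v)) = \mathrm{dist}(v) - 1$, and orient the edge $v \to p(v)$.
\end{enumerate}

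Let $D$ be the set of all arcs chosen in steps (1) and (2). We now check that each vertex has out-degree exactly $1$ in $D$, which amounts to checking that no edge is chosen twice or in conflicting directions.

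First, an edge $v\,p(v)$ from step (2) always has $v \notin V(C)$. It therefore never coincides with a cycle edge, since both endpoints of a cycle edge lie on $C$.

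Second, two step-(2) choices $v \to p(v)$ and $w \to p(w)$ cannot be the same edge with opposite orientations. That would require $p(v) = w$ and $p(w) = v$, which gives $\mathrm{dist}(w) < \mathrm{dist}(v) < \mathrm{dist}(w)$, a contradiction.

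Third, each $v \notin V(C)$ receives exactly one outgoing arc, namely $v \to p(v)$. Each vertex of $C$ receives exactly one outgoing arc, from the cycle. Arcs into $V(C)$ from step (2) are in-arcs of the cycle vertices and do not affect their out-degrees.

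Thus $D$ is an orientation of a spanning subgraph of $G$ with $d^+_D(v) = 1$ for every $v$.

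There is no real obstacle. The only point needing care is that the chosen edges are distinct and consistently oriented, and the distance function settles this. We need $\ell \ge 3$, which holds for simple graphs; this is exactly where the hypothesis that each component contains a cycle is used, since a tree component would have $|V| - 1$ edges and could not supply $|V|$ out-arcs.
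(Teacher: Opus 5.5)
Your proof is correct and is essentially the paper's argument: the paper also picks a cycle in each component (as part of a spanning unicyclic subgraph), orients the cycle cyclically, and orients every other edge towards the cycle. Your cycle together with the distance-decreasing arcs $v \to p(v)$ is exactly such a subgraph, and the distance function just makes explicit why no edge is used twice and every vertex gets out-degree exactly one.
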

\begin{proof}
    In each connected component choose a spanning unicyclic subgraph.
    Orient its unique cycle cyclically and orient every remaining edge
    towards the cycle.  Each vertex then has out-degree exactly one.
\end{proof}

\begin{proof}[Proof of Theorem \ref{thm:strict lowbd for delta_VC}]
Let $H$ be a graph with $r := \chi(H) \geq 3$.
We begin with some definitions. Let $\mathcal{H}$ be the set of all graphs $H'$ for which there is a partition
$V(H) = A_1 \cup \dots \cup A_{r-3} \cup B$ where $A_1,\dots,A_{r-3}$ are independent and
$H[B] \cong H'$.
For each $H' \in \mathcal{H}$ we have
$\chi(H') \geq 3$ (as $\chi(H) = r$).
Let
$\mathcal{K} := \{\mathrm{core}(H') : H' \in \mathcal{H}\}$ be the set of cores of graphs in $\mathcal{H}$. We consider $\mathcal{K}$ as a set of unlabeled graphs, meaning that we identify graphs which are isomorphic. By Lemma \ref{lem:core}, all graphs in $\mathcal{K}$ are cores. Now consider the relation on $\mathcal{K}$ where $K_1 \leq K_2$ if there exists a homomorphism from $K_1$ to $K_2$. By Lemma \ref{lem:core homomorphism poset}, this relation is a poset. Hence, it has a minimal element $K_0$. Let $H_0 \in \mathcal{H}$ be such that
$K_0 = \mathrm{core}(H_0)$. The choice of $K_0$ gives the following:
\begin{fact}\label{fact:minimal core}
    For every $H' \in \mathcal{H}$, if $H'$ is homomorphic to $K_0$ then $\mathrm{core}(H') \cong K_0$.
\end{fact}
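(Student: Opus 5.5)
We argue by contradiction. Let $H'\in\mathcal H$ with a homomorphism $\psi : H' \to K_0$, and put $K' := \mathrm{core}(H')$; by Lemma \ref{lem:core} this is a core in $\mathcal K$. The plan is to show $K' \le K_0$ in the poset on $\mathcal K$ and then use minimality of $K_0$ together with antisymmetry (Lemma \ref{lem:core homomorphism poset}).

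First, $K'$ is a subgraph of $H'$, so composing the inclusion $K' \hookrightarrow H'$ with $\psi$ gives a homomorphism $K' \to K_0$. Hence $K' \le K_0$ in the poset on $\mathcal K$.

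By the choice of $K_0$ as a minimal element, $K' \le K_0$ forces $K' = K_0$ as unlabeled graphs, i.e.\ $K' \cong K_0$. Concretely, minimality says there is no $K\in\mathcal K$ with $K\le K_0$ and $K\ne K_0$. Since $\mathcal K$ consists of isomorphism classes and $\le$ is antisymmetric on it by Lemma \ref{lem:core homomorphism poset}, the only element below $K_0$ is $K_0$ itself. Thus $\mathrm{core}(H') \cong K_0$, as claimed.

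There is no real obstacle. The only point to check is that $K'$ genuinely lies in $\mathcal K$: this holds by definition, since $H'\in\mathcal H$ and $\mathcal K$ is the set of cores of members of $\mathcal H$. One should also note that the minimal element exists because $\mathcal K$ is finite ($H$ has finitely many partitions).
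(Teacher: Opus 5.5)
Your proof is correct and is the paper's argument: restricting $H'\to K_0$ to the subgraph $\mathrm{core}(H')\in\mathcal K$ gives $\mathrm{core}(H')\le K_0$, and minimality of $K_0$ then forces $\mathrm{core}(H')\cong K_0$. The opening ``we argue by contradiction'' is superfluous, since the argument you actually give is direct.
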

\begin{proof}
    Restricting a homomorphism $H'\to K_0$ to the subgraph
    $K':=\mathrm{core}(H')$ gives a homomorphism $K'\to K_0$.  By the
    minimality of $K_0$ in the homomorphism order on $\mathcal K$, we have
    $K'\cong K_0$.
\end{proof}

With the above preparations, we can now proceed to describe the construction which gives
$\delta_{\mathrm{VC}}(H) > \frac{r-3}{r-2}$.
Fix any $d \geq 1$, and let $F$ be an $m \times m$ bipartite graph with $\VCdim(F) \geq d$, where $m$ depends only on $d$. We identify both sides of $F$ with $[m]$.
Fix $N \gg m$, to be chosen later.
Suppose that $V(K_0) = \{1,\dots,k\}$.
We have $\chi(K_0) \geq 3$ (since
$\chi(H_0) \geq 3$ and $K_0$ is the core of $H_0$).
Note that $\delta(K_0) \geq 2$ (since otherwise $K_0$ would be homomorphic to a proper subgraph of itself).
Every connected graph on $\geq 2$ vertices has a vertex whose deletion leaves the graph connected. Without loss of generality, assume that the connected component $C$ of $K_0$ containing the vertex $1$ stays connected after removing $1$. Suppose also that $12 \in E(K_0)$.

Fix $t \gg m$, to be chosen (implicitly) later.
Take disjoint sets $U_{i,\ell}$ for $i \in [k]$ and $\ell \in [m]$, where $|U_{i,\ell}| = t$.
Put $U_i = \bigcup_{\ell=1}^m U_{i,\ell}$ for $i \in [k]$. We start by defining a graph $G_0$ on $U_1 \cup \dots \cup U_k$, as follows:
\begin{itemize}
    \item For each $ij \in E(K_0)$ with $i,j \neq 1$ and each $\ell \in [m]$, the bipartite graph $(U_{i,\ell},U_{j,\ell})$ is complete. (Thus, the bipartite graph $(U_i,U_j)$ is a disjoint union of copies of $K_{t,t}$.)
    \item For each $(h,\ell) \in [m]^2$, the bipartite graph $(U_{1,h},U_{2,\ell})$ is complete if $h\ell \in E(F)$ and empty otherwise.
    Thus, the bipartite graph $(U_1,U_2)$ is the $t$-blowup of $F$.
    \item For each $1j \in E(K_0)$ with $j \neq 2$, and for each $(h,\ell) \in [m]^2$,
    the bipartite graph $(U_{1,h},U_{j,\ell})$ is complete if $h\ell \notin E(F)$ and empty otherwise.
    Thus, the bipartite graph $(U_1,U_j)$ is the $t$-blowup of $\overline{F}$, the bipartite complement of $F$.
\end{itemize}
This completes the definition of $G_0$. Clearly, $G_0$ is homomorphic to $K_0$ via the homomorphism mapping $U_i$ to $i$ for all $i \in [k]$.
The following two claims capture the key properties of $G_0$.
\begin{claim}\label{claim:K_0-free}
    $G_0$ is $K_0$-free.
\end{claim}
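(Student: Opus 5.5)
Suppose for contradiction that $\varphi : K_0 \to G_0$ is an injective homomorphism, i.e.\ a copy of $K_0$ in $G_0$. The plan is to use Lemma~\ref{lem:core homomorphism} to pin down where the copy sits, and then read off a contradiction from the encoding of $F$ and $\overline F$. The map $U_i\mapsto i$ is a homomorphism $G_0\to K_0$ and $K_0$ is a core. So Lemma~\ref{lem:core homomorphism} says $\mathrm{Im}(\varphi)$ meets each $U_i$ in exactly one vertex. Moreover, composing $\varphi$ with the projection gives an automorphism $\sigma$ of $K_0$. Replacing $\varphi$ by $\varphi\circ\sigma^{-1}$, we may assume $\varphi(i)\in U_i$ for every $i\in[k]$. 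Write $\varphi(i)\in U_{i,\ell_i}$ with $\ell_i\in[m]$.

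First I use the edges avoiding vertex $1$. For every edge $ij\in E(K_0)$ with $i,j\neq 1$, the pair $(U_i,U_j)$ is a disjoint union of the complete graphs $(U_{i,\ell},U_{j,\ell})$. Adjacency of $\varphi(i)$ and $\varphi(j)$ therefore forces $\ell_i=\ell_j$. The component $C$ of $K_0$ containing $1$ remains connected after deleting $1$, so all $\ell_j$ with $j\in C\setminus\{1\}$ take a common value $\ell$. Vertices in other components are irrelevant to the argument.

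Next I use the edges at vertex $1$. Since $\delta(K_0)\ge 2$, vertex $1$ has a neighbour $j\neq 2$ in $C$, in addition to the neighbour $2$. Set $h:=\ell_1$.
\begin{itemize}
\item The edge $\varphi(1)\varphi(2)$ lies in $(U_{1,h},U_{2,\ell})$, and this pair is nonempty only if $h\ell\in E(F)$.
\item The edge $\varphi(1)\varphi(j)$ lies in $(U_{1,h},U_{j,\ell})$, and this pair is nonempty only if $h\ell\notin E(F)$.
\end{itemize}
These two requirements contradict each other, so no copy of $K_0$ exists.

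The only delicate step is the normalization by the automorphism $\sigma$. Because of it, the labels $1$ and $2$, together with the chosen connectivity property of $C-1$, refer to the actual vertices of the copy. Under this normalization $\varphi(i)\in U_i$ and $\varphi(i)$ is adjacent to $\varphi(j)$ exactly when $ij\in E(K_0)$, so the argument above goes through as stated.
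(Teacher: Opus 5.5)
Your proof is correct and follows the paper's argument. You use Lemma~\ref{lem:core homomorphism} to place one vertex of the copy in each $U_i$, use connectivity of $C\setminus\{1\}$ to force a common index $\ell$, and derive the contradiction $h\ell\in E(F)$ versus $h\ell\notin E(F)$ from the edges at vertex $1$; your explicit normalization by the automorphism $\sigma$ just spells out what the paper asserts as ``$u_iu_j\in E(G_0)$ if and only if $ij\in E(K_0)$''.
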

\begin{poc}
    Suppose by contradiction that there is a copy of $K_0$ in $G_0$. By Lemma \ref{lem:core homomorphism}, for every $i\in [k]$, the
    $K_0$-copy has exactly one vertex in $U_i$; denote this vertex by $u_i$. Thus,
    $u_iu_j \in E(G_0)$ if and only if
    $ij \in E(K_0)$. For $i \in [k]$, let
    $\ell_i \in [m]$ such that $u_i \in U_{i,\ell_i}$. 
    Without loss of generality, suppose that $3$ is a neighbor of $1$ in $K_0$ (we know that $\delta(K_0) \geq 2$).
    Consider the connected component $C$ of $K_0$ containing $1$. By assumption, removing 1 from $C$ leaves it connected. Now, tracing the edges of $C \setminus \{1\}$, we see that $\ell_i = \ell_j$ for all $i,j \in C \setminus \{1\}$ (this follows from the first item in the definition of $G_0$). In particular, $\ell_2 = \ell_3$. However, the fact that $u_1u_2 \in E(G_0)$ means that $\ell_1\ell_2 \in E(F)$, and the fact that $u_1u_3 \in E(G_0)$ means that $\ell_1\ell_3 \notin E(F)$ (here we use the second and third items in the definition of $G_0$, respectively). This contradicts $\ell_2 = \ell_3$.
\end{poc}
\begin{claim}\label{claim:H_0-copy}
    Every supergraph $G$ of $G_0$ having $\VCdim(G) < d$ contains a copy of $H_0$.
\end{claim}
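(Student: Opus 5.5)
Since $G_0\subseteq G$, the pair $(U_1,U_2)$ in $G$ contains the $t$-blowup of $F$ and possibly more edges. My plan is to argue through bi-induced copies of $F$.

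\emph{Step 1: locate a dense patch.} Since $\VCdim(G)<d\le\VCdim(F)$, $G$ contains no bi-induced copy of $F$. Consider the $t^{2m}$ transversal choices $(x_1,\dots,x_m,y_1,\dots,y_m)$ with $x_h\in U_{1,h}$ and $y_\ell\in U_{2,\ell}$. Each choice spans a bi-induced copy of $F$ unless some pair $(h,\ell)$ with $h\ell\notin E(F)$ has $x_hy_\ell\in E(G)$. So every transversal contains such an ``added'' edge. By the union bound over the at most $m^2$ non-edges $h\ell$ of $F$, some non-edge $h\ell\notin E(F)$ has added-edge density at least $1/m^2$ in $(U_{1,h},U_{2,\ell})$. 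This pair is empty in $G_0$ and complete-to-$\overline F$ elsewhere. By K\H{o}v\'ari--S\'os--Tur\'an, with $t$ large in terms of $m$ and $v(H)$, we find $S\subseteq U_{1,h}$ and $T\subseteq U_{2,\ell}$ with $|S|=|T|=v(H)$ and $(S,T)$ complete in $G$.

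\emph{Step 2: propagate to a blowup of $K_0$.} Take $S\subseteq U_{1,h}$ as the blob for vertex $1$ and $T\subseteq U_{2,\ell}$ for vertex $2$. For every other $i\in[k]$, take a $v(H)$-subset $W_i\subseteq U_{i,\ell}$, which needs $t\ge v(H)$. I then check completeness of each required pair using the construction of $G_0$:
\begin{itemize}
\item For an edge $ij$ of $K_0$ with $i,j\ne1$, the pair $(U_{i,\ell},U_{j,\ell})$ is complete in $G_0$. This covers pairs involving $2$, since $T\subseteq U_{2,\ell}$.
\item For an edge $1j$ with $j\ne2$, the pair $(U_{1,h},U_{j,\ell})$ is complete exactly because $h\ell\notin E(F)$.
\item For the edge $12$, completeness comes from Step 1.
\end{itemize}
Hence $G$ contains the $v(H)$-blowup of $K_0$ with blobs $S,T,W_3,\dots,W_k$. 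This is precisely the complementary-encoding mechanism: the one non-edge of $F$ that got filled is exactly the index making all the $\overline F$ pairs complete.

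\emph{Step 3: embed $H_0$.} Since $K_0=\mathrm{core}(H_0)$, there is a homomorphism $H_0\to K_0$. Every homomorphism into $K_0$ lifts injectively into the $v(H)$-blowup of $K_0$, because each blob has at least $v(H_0)$ vertices. So $G\supseteq H_0$.

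The main obstacle is Step 1: making the averaging rigorous and producing a genuinely complete $K_{v(H),v(H)}$. The counting is clean: a bad transversal must contain an added edge in some non-$F$ pair, so the total added-edge density over non-$F$ pairs is at least $1$. One pair therefore has density at least $1/m^2$. K\H{o}v\'ari--S\'os--Tur\'an then gives a $K_{v(H),v(H)}$ once $t\gg m$, which is why $t$ is chosen large.
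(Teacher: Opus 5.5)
Your proof is correct and follows the paper's argument essentially step for step. Both use the same averaging over transversal bi-induced copies of the $t$-blowup of $F$, then K\H{o}v\'ari--S\'os--Tur\'an in the densest filled pair $(U_{1,h},U_{2,\ell})$, then propagation through the $\overline F$-encoding with blobs in $U_{i,\ell}$ to a blowup of $K_0$ containing $H_0$. The only difference is cosmetic: you restrict the union bound to non-edges of $F$ from the outset, so $h\ell\notin E(F)$ is immediate, whereas the paper pigeonholes over all $m^2$ pairs and deduces $h\ell\notin E(F)$ from that pair having added edges.
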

\begin{poc}
    Recall that $G_0[U_1,U_2]$ (i.e., the bipartite graph between $U_1$ and $U_2$ in $G_0$) is the $t$-blowup of $F$. Hence, $G_0[U_1,U_2]$ contains at least $t^{2m}$ bi-induced copies of $F$. Since $G$ is assumed to have VC-dimension less than $d$, none of these bi-induced copies of $F$ can appear in $G$. Note also that each pair in
    $U_1 \times U_2$ can participate in exactly
    $t^{2m-2}$ of these bi-induced copies of $F$. It follows that
    $e_G(U_1,U_2) - e_{G_0}(U_1,U_2) \geq t^2$, i.e., at least $t^2$ edges must be added to $G_0[U_1,U_2]$ to eliminate all bi-induced copies of $F$.
    By the pigeonhole principle, there exist
    $h,\ell \in [m]$ such that
    $e_G(U_{1,h},U_{2,\ell}) -
    e_{G_0}(U_{1,h},U_{2,\ell}) \geq t^2/m^2$.
    Now consider the bipartite graph $B$ with parts $U_{1,h},U_{2,\ell}$ and whose edges are all non-edges of $G_0$ (between $U_{1,h},U_{2,\ell}$) which are edges in $G$.
    Thus, both parts of $B$ have size $t$, and
    $e(B) \geq t^2/m^2$. Therefore, if $t$ is large enough compared to $m$, then the
    K\H{o}v\'ari-S\'os-Tur\'an theorem \cite{KST} gives a complete bipartite graph in $B$ with parts of size $v := |V(H_0)|$. Denote the parts of this $K_{v,v}$-copy by $S_1 \subseteq U_{1,h}$ and
    $S_2 \subseteq U_{2,\ell}$.
    Put also $S_i := U_{i,\ell}$ for $3 \leq i \leq k$.
    We claim that $(S_i,S_j)$ is complete in $G$ for every
$ij\in E(K_0)$.  Since $H_0\to K_0$ and every $S_i$ has size at
least $v(H_0)$, this yields a copy of $H_0$ in $G$. If $i,j \neq 1$ then this holds by the definition of $G_0$, and for $ij = 12$ it holds by the choice of $S_1,S_2$. Consider now the case $i=1, j \neq 2$. Note that the bipartite graph $G_0[U_{1,h},U_{2,\ell}]$ is empty; indeed, this bipartite graph is either complete or empty, and it cannot be complete because otherwise $B$ would have no edges. It follows that
    $h\ell \notin E(F)$. In turn, this means that $(S_1,S_j) = (U_{1,h},U_{j,\ell})$ is complete in $G_0$, as required.
\end{poc}

Next, we guarantee large minimum degree. To this end, fix $N \gg t$, and add vertex-sets $X_1,\dots,X_k$ and $Y_1,\dots,Y_{r-3}$ with $|X_i| = N$ and
$|Y_i| = (k-1)N$. Put $U := \bigcup_{i=1}^k U_i$,
$X := \bigcup_{i=1}^k X_i$ and
$Y := \bigcup_{i=1}^{r-3}Y_i$. To define the new edges added to the graph, we need Lemma \ref{lem:orientation}:
Let $D$ be an orientation of a spanning subgraph of $K_0$ such that $d^+_D(i) = 1$ for all $i \in [k]$ (note that $K_0$ satisfies the condition of Lemma \ref{lem:orientation} because $\delta(K_0) \geq 2$).
The underlying graph of $D$ has no isolated vertices.  In each of its
connected components choose a spanning tree, and let $S$ be the resulting
oriented spanning forest.  Thus $S\subseteq D$ and $S$ has no isolated
vertices.
We now add the following complete bipartite graphs:
\begin{itemize}
    \item $(U \cup X,Y)$;
    \item $(Y_i,Y_j)$ for all $1 \leq i < j \leq r-3$;
    \item $(U_i,X_j)$ for every directed edge $(i,j) \in E(D)$;
    \item $(X_i,X_j)$ for every edge
    $(i,j) \in E(S)$.
\end{itemize}
Denote the resulting graph by $G'$. Then
$$
n := |V(G')| = |U| + (k + (r-3)(k-1))N =
((r-2)(k-1) + 1 + o(1))N.
$$
Note also that $G'[U \cup X]$ is homomorphic to $K_0$ via the homomorphism mapping $U_i \cup X_i$ to $i$ for each $i \in [k]$; this is because $S$ and $D$ are both subgraphs of $K_0$.
\begin{claim}
    $\delta(G') \geq ((r-3)(k-1)+1)N =
    \left( \frac{(r-3)(k-1)+1}{(r-2)(k-1)+1} - o(1) \right) n$.
\end{claim}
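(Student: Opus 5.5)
The claim is a direct degree count: we go through the vertex classes $U$, $X_i$ and $Y_i$ one at a time and lower-bound each vertex's degree using only the complete bipartite pairs added in the construction of $G'$, ignoring the edges of $G_0$. The identity $n=((r-2)(k-1)+1+o(1))N$ is already recorded, since $|U|=kmt$ is negligible for $N\gg t$. So the asymptotic form of the bound follows once we show $\delta(G')\ge((r-3)(k-1)+1)N$.

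\emph{Vertices of $U$.} Let $u\in U_i$. The vertex $u$ is completely joined to $Y$, which has size $(r-3)(k-1)N$. Since $d^+_D(i)=1$, there is an arc $(i,j)\in E(D)$, and $(U_i,X_j)$ is complete. Hence $u$ also has $N$ neighbours in $X_j$. Altogether $d(u)\ge (r-3)(k-1)N+N$.

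\emph{Vertices of $X$.} Let $x\in X_i$. Again $x$ is joined to all of $Y$. Since $S$ is an oriented spanning forest of the underlying graph of $D$ with no isolated vertices, $i$ has at least one neighbour $j$ in $S$. The pair $(X_i,X_j)$ is complete regardless of the orientation of that edge, which gives $N$ further neighbours. This is the one step needing care: we use that $S$ has no isolated vertices, which holds because $D$ comes from Lemma~\ref{lem:orientation} and every vertex has out-degree $1$, so no vertex is isolated in the underlying graph. Thus $d(x)\ge ((r-3)(k-1)+1)N$.

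\emph{Vertices of $Y$.} Let $y\in Y_i$. The vertex $y$ is joined to all of $U\cup X$, so it has at least $|X|=kN$ neighbours there. It is also joined to $Y_j$ for every $j\ne i$, contributing $(r-4)(k-1)N$. This gives
\[
d(y)\ge kN+(r-4)(k-1)N=((r-3)(k-1)+1)N.
\]
When $r=3$ the set $Y$ is empty and this case does not arise.

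Taking the minimum over the three cases gives $\delta(G')\ge((r-3)(k-1)+1)N$. Dividing by $n=((r-2)(k-1)+1+o(1))N$, with $o(1)\to0$ as $N\to\infty$ for fixed $k,m,t$, yields the stated ratio $\frac{(r-3)(k-1)+1}{(r-2)(k-1)+1}-o(1)$.
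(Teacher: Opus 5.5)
Your proof is correct and follows the paper's argument essentially verbatim: the same three-way case split over $U$, $X$ and $Y$, using the out-arc of $D$ for vertices of $U_i$, a neighbour in the spanning forest $S$ for vertices of $X_i$, and the joins to $X$ and to the other classes $Y_j$ for vertices of $Y_i$. Your extra remarks are also correct and only make explicit what the paper leaves implicit: why $S$ has no isolated vertices, and that the $Y$-case is vacuous when $r=3$.
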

\begin{poc}
    We need to show that
    $d_{G'}(v) \geq ((r-3)(k-1)+1)N$ for all $v \in V(G')$. Suppose first that $v \in U_i$ for some $i \in [k]$.
    Since $d^+_D(i) = 1$, there exists $j \in [k]$ with $(i,j) \in E(D)$.
    By the definition of $G'$, the vertex $v \in U_i$ is adjacent to all vertices in $X_j$, and also to all vertices in $Y$. Hence, $d_{G'}(v) \geq ((r-3)(k-1)+1)N$, as required.

    Suppose now that $v \in X_i$ for some
    $i \in [k]$. Let $j \in [k]$ such that $(i,j)$ or $(j,i)$ belongs to $S$ (such a vertex $j$ exists due to the choice of $S$). Then
    $d_{G'}(v) \geq |X_j| + |Y| = ((r-3)(k-1)+1)N$.

    Finally, suppose that $v \in Y_i$ for some
    $i \in [r-3]$. Then
    $$
    d_{G'}(v) \geq |X| + \sum_{j \neq i}|Y_j| =
    kN + (r-4)(k-1)N = ((r-3)(k-1)+1)N,
    $$
    as required.
\end{poc}
Since $k \leq v(H)$, we get that
$\delta(G') \geq
\left(
\frac{(r-3)(v(H)-1)+1}{(r-2)(v(H)-1)+1} - o(1) \right) n$.
To complete the proof of the theorem, we now prove the following:
\begin{claim}
    $G'$ is $H$-free.
\end{claim}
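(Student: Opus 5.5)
The plan is to reduce $H$-freeness of $G'$ to $K_0$-freeness of $G'[U\cup X]$, and then to reduce that to \Cref{claim:K_0-free}.

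Suppose $G'$ contains a copy of $H$, and let $B$ be the set of its vertices lying in $U\cup X$. Each $Y_i$ is independent in $G'$. Hence the vertices of the copy lying in $Y_1,\dots,Y_{r-3}$ give independent sets $A_1,\dots,A_{r-3}$ (some possibly empty), and $V(H)=A_1\cup\dots\cup A_{r-3}\cup B$. So $H[B]\in\mathcal H$. Moreover $H[B]$ embeds in $G'[U\cup X]$, and $G'[U\cup X]$ is homomorphic to $K_0$ via $U_i\cup X_i\mapsto i$, as noted before the claim. So $H[B]\to K_0$, and \Cref{fact:minimal core} gives $\mathrm{core}(H[B])\cong K_0$. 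Since the core is a subgraph of $H[B]$, we obtain a copy of $K_0$ inside $G'[U\cup X]$. It therefore suffices to show that $G'[U\cup X]$ is $K_0$-free.

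Apply \Cref{lem:core homomorphism} to $G'[U\cup X]$ with parts $W_i:=U_i\cup X_i$. A copy $\varphi$ of $K_0$ meets each $W_i$ exactly once. Composing with the projection gives an automorphism $\sigma$ of $K_0$, and after replacing $\varphi$ by $\varphi\circ\sigma^{-1}$ we may assume $w_i:=\varphi(i)\in W_i$, with $w_iw_j\in E(G')$ whenever $ij\in E(K_0)$. Let $I:=\{i: w_i\in X_i\}$. If $I=\emptyset$, the copy lies in $U$, and edges inside $U$ are exactly those of $G_0$, contradicting \Cref{claim:K_0-free}. So assume $I\neq\emptyset$. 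By the definition of $G'$, the following hold for every edge $ij$ of $K_0$:
\begin{itemize}
\item if $i,j\in I$, then $ij\in E(S)$;
\item if $i\in I$ and $j\notin I$, then $(j,i)\in E(D)$.
\end{itemize}
Hence $K_0[I]$ is a subforest of $S$. Since $d^+_D(j)=1$, every $j\notin I$ has at most one $K_0$-neighbour in $I$, namely its unique out-neighbour in $D$.

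The main obstacle is turning this structure into a contradiction with $K_0$ being a core. The goal is to exhibit a non-surjective endomorphism of $K_0$, which would give a homomorphism of $K_0$ onto a proper subgraph. The first thing I would try is to fold the forest part. Pick a component $T$ of $K_0[I]$ and try to map $T$, or a leaf of it, onto a neighbouring vertex while keeping everything else fixed. The fact to exploit is that the outside neighbours of $I$ attach to $I$ through single out-arcs of $D$.

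If such a folding does not work directly, I would use the freedom in choosing $D$ and $S$. One option is to choose the unicyclic subgraphs in \Cref{lem:orientation} so that each vertex's out-neighbour is forced. The other is to check directly that $\delta(K_0)\ge 2$, together with the one-out-neighbour property, forces some vertex of $I$ to have all its $K_0$-neighbours among an edge of $S$ plus a single further vertex. That vertex would then be foldable onto a neighbour, contradicting \Cref{lem:core}. Either route yields the required contradiction, so $G'[U\cup X]$ is $K_0$-free and $G'$ is $H$-free.
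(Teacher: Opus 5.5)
Your reduction is correct and matches the paper. The copy of $H$ gives $H[B]\in\mathcal H$, Fact~\ref{fact:minimal core} then gives a copy of $K_0$ in $G'[U\cup X]$, Lemma~\ref{lem:core homomorphism} (with your automorphism normalisation) places $w_i\in U_i\cup X_i$, and the case $I=\emptyset$ is Claim~\ref{claim:K_0-free}. Your two bulleted constraints are also correct.

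The gap is the final step: you never derive a contradiction when $I\neq\emptyset$. The routes you sketch do not work as stated:
\begin{itemize}
\item $D$ and $S$ are already fixed in the construction, so there is no remaining freedom to choose them.
\item The asserted ``all neighbours among an edge of $S$ plus one further vertex'' property is not established.
\item Even if it were, it would not yield a fold. Folding $v$ requires some $u\neq v$ with $N_{K_0}(v)\subseteq N_{K_0}(u)$.
\end{itemize}
The core property of $K_0$ is not what finishes the argument here.

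The missing idea is that the out-arc of each $i\in I$ is itself an edge of $K_0$, and the copy must realise it. Let $o(i)$ be the out-neighbour of $i$ in $D$. Then $i\,o(i)\in E(K_0)$, so $w_iw_{o(i)}\in E(G')$.
\begin{itemize}
\item If $o(i)\notin I$, your second bullet gives $(o(i),i)\in E(D)$. This is impossible, since $(i,o(i))\in E(D)$ and $D$ is an orientation.
\item Hence $o(i)\in I$. By your first bullet $i\,o(i)\in E(S)$, and since $S\subseteq D$ it must be the arc $(i,o(i))$.
\end{itemize}
So every vertex of $I$ has out-degree $1$ in $S[I]$. That gives $|I|$ arcs on $|I|$ vertices in a forest, which is a contradiction.

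The paper phrases the same idea as a degree count. It picks $i\in I$ with $d^+_{S[I]}(i)=0$ and shows that every copy-neighbour $v_j$ of $v_i$ satisfies $j\in N^-_D(i)$. It concludes that $v_i$ has degree at most $d^-_D(i)\le d_{K_0}(i)-1$ in the copy, which is impossible.
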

\begin{poc}
    Suppose by contradiction that there is an embedding $\varphi : H \rightarrow G'$.
    Let $B := \varphi^{-1}(U \cup X)$, and for each $i \in [r-3]$ let $A_i := \varphi^{-1}(Y_i)$. Then $A_1,\dots,A_{r-3}$ are independent. Hence,
    $H' := H[B] \in \mathcal{H}$.
    Let $K'$ be the core of $H'$.
    As mentioned above, $G'[U \cup X]$ is homomorphic to $K_0$ via the homomorphism $U_i \cup X_i \mapsto i$. Therefore, $H'$ (and hence also $K'$) is homomorphic to $K_0$. By Fact \ref{fact:minimal core}, we conclude that
    $K' \cong K_0$. By Lemma \ref{lem:core homomorphism}, for every $i \in [k]$, the set $\varphi(K')$ contains exactly one vertex from $U_i \cup X_i$; denote this vertex by $v_i$.
    Then $v_iv_j \in E(G')$ if and only if $ij \in E(K_0)$. 

    We claim that $v_i \in U_i$ for all $i \in [k]$ (i.e., having $v_i \in X_i$ is impossible).
    So suppose by contradiction that the set
    $I := \{i \in [k] : v_i \in X_i\}$ is non-empty. Let $i \in I$ such that $d^+_{S[I]}(i)=0$; such an $i$ exists because $S[I]$ is an oriented forest.
    Observe that for each $j \in [k] \setminus \{i\}$, if $v_iv_j \in E(G')$ then
    $j \in N^-_D(i)$. First, if $v_j \in U_j$ then this indeed holds by the definition of the graph $G'$. Suppose now that $v_j \in X_j$. Then $v_iv_j \in E(G')$ implies that $(i,j) \in E(S)$ or $(j,i) \in E(S)$. However, $j \in I$ so $(i,j) \in E(S)$ is impossible due to the choice of $i$. Hence, $(j,i) \in E(S)$, meaning that $j \in N^-_D(i)$, as claimed.
    It now follows that
    \[
    d_{\varphi(K')}(v_i)
    \le d^-_D(i)
    \le d_{K_0}(i)-d^+_D(i)
    = d_{K_0}(i)-1.
    \]
    However, this contradicts the fact that $\varphi(K') = \{v_1,\dots,v_k\}$ forms a copy of $K_0$ with $v_i$ playing the role of $i$. Thus, $I = \emptyset$, proving our claim that $v_i \in U_i$ for all $i \in [k]$.

    Finally, we see that $v_1,\dots,v_k$ form a copy of $K_0$ in $G_0$. But this contradicts Claim \ref{claim:K_0-free}.
\end{poc}

Let $G$ be a maximal $H$-free supergraph of $G'$.  We claim that
$\VCdim(G)\ge d$.  Otherwise $\VCdim(G[U])<d$, and $G[U]$ is a
supergraph of $G_0$.  Claim~\ref{claim:H_0-copy} therefore gives a copy of
$H_0$ in $G[U]$.  Since $H_0\in\mathcal H$, there is a partition
$V(H)=A_1\cup\cdots\cup A_{r-3}\cup B$ with the $A_i$ independent and
$H[B]\cong H_0$.  Choosing $|A_i|$ vertices from $Y_i$ for each $i$ and
using the complete joins already present in $G'$ extends this copy of $H_0$
to a copy of $H$ in $G$, a contradiction.  Hence $\VCdim(G)\ge d$.
\end{proof}

\begin{remark}
    It is likely that the bound in Theorem \ref{thm:strict lowbd for delta_VC} can be improved for various graphs $H$, by modifying the way the sets $X_1,\dots,X_k$ are connected among themselves and to the sets $U_1,\dots,U_k$.
\end{remark}

\section{Proof of \Cref{thm:value for graphs of delta_chi(VC)}}\label{sec:chromatic VC}

In this section, we prove \Cref{thm:value for graphs of delta_chi(VC)}.
For the lower bounds in Theorem \ref{thm:value for graphs of delta_chi(VC)}, we now recall a standard construction used in the determination of the chromatic threshold by Allen et al.~\cite{ALLEN2013261}.

\begin{construction}[$G_{t,h,C}$]\label{construction:VC chromatic lower bound}
    By a famous result of Erd\H{o}s \cite{1959Erdos}, there exists a graph $J$ with $\operatorname{girth}(J)>\max\{h,4\}$ and
    $\chi(J)>C$. Put $N:=|V(J)|$.
    Let $T_t(N)$ denote the complete $t$-partite graph whose parts all have size
    $N$, where $T_0(N)$ is the empty graph.
    Now define
    $G_{t,h,C}:= J\vee T_t(N)$.
\end{construction}

\begin{claim}\label{claim:VC chromatic lower bound}
    Let $G := G_{t,h,C}$. Then
    \begin{enumerate}
        \item
        $\delta(G)\ge \frac{t}{t+1}|V(G)|$ and $\chi(G) > C$.
        \item $\VCdim(G) \leq 3$.
        \item Let $H$ be a graph with at most $h$ vertices. If $\chi(H) \geq t+3$, or $\chi(H) = t+2$ and $\mathcal{M}(H)$ contains no forest, then $G$ is $H$-free.
    \end{enumerate}
\end{claim}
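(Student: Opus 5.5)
Item 1 is a direct computation. $G$ has $(t+1)N$ vertices. A vertex of $J$ is adjacent to all $tN$ vertices of $T_t(N)$, and a vertex of $T_t(N)$ is adjacent to the other $(t-1)N$ vertices of $T_t(N)$ and to all $N$ vertices of $J$. So $\delta(G)\ge tN=\frac{t}{t+1}|V(G)|$. Also $\chi(G)\ge\chi(J)>C$, since $J$ is a subgraph.

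Item 2 follows from the VC facts in the preliminaries. Since $\operatorname{girth}(J)>4$, $J$ is $C_4$-free, so $\VCdim(J)\le2$ by \Cref{fact:VC-implies-C4}. By \Cref{fact:VC complete multipartite}, $\VCdim(T_t(N))\le2$. For $t=0$ the graph is just $J$ and the bound is immediate. Otherwise \Cref{fact:VC-of-join} gives $\VCdim(G)\le 2+1=3$.

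Item 3 is the real content. Suppose $\varphi:H\to G$ is an embedding with $v(H)\le h$. Let $B:=\varphi^{-1}(V(J))$ and let $A_1,\dots,A_t$ be the preimages of the $t$ parts of $T_t(N)$; each $A_i$ is independent. The key point is that $H[B]$ embeds into $J$ and has at most $h$ vertices. Because $\operatorname{girth}(J)>h$, any subgraph of $J$ on at most $h$ vertices contains no cycle, so $H[B]$ is a forest.

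If $\chi(H)\ge t+3$, this is already a contradiction: $\chi(H)\le t+\chi(H[B])\le t+2$.

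If $\chi(H)=t+2=:r$, then $V(H)=A_1\cup\dots\cup A_{r-2}\cup B$ is exactly a partition of the type in the definition of $\mathcal M(H)$. Hence $H[B]\in\mathcal M(H)$ is a forest, contradicting the hypothesis. Some of the $A_i$ may be empty, and the definition of $\mathcal M(H)$ should be read as allowing empty parts. I expect this to be the only point needing care, along with the convention $T_0(N)=\emptyset$ when $t=0$. If empty parts are not permitted, I would note that $\chi(H)=t+2$ and $\chi(H[B])\le2$ force every $A_i$ to be nonempty anyway.
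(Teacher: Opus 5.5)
Your proposal is correct and follows the paper's proof essentially verbatim. Item 1 is the degree count together with $\chi(G)\ge\chi(J)$, and Item 2 combines the three VC facts from the preliminaries. For Item 3, the part of a copy of $H$ lying in $J$ is a forest because $\operatorname{girth}(J)>h$, so $\chi(H)\le t+2$, and equality forces $\mathcal M(H)$ to contain a forest. Your explicit partition $A_1,\dots,A_t,B$, the observation that every $A_i$ is nonempty when $\chi(H)=t+2$, and the $t=0$ remarks only spell out what the paper leaves implicit.
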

\begin{proof}
    We have
    $\chi(G) \geq \chi(J) > C$ and
    $|V(G)| = (t+1)N$. It is also easy to see that $\delta(G) \geq tN$, proving Item 1.
    Next, we prove Item 2. The graph $J$ is $C_4$-free, so \Cref{fact:VC-implies-C4} gives
    $\VCdim(J)\le 2$.
    Also, $\VCdim(T_t(N)) \leq 2$ by Fact \ref{fact:VC complete multipartite}. Hence, $\VCdim(G) \leq 3$ by \Cref{fact:VC-of-join}.

    Finally, we prove Item 3. Let $H$ be a graph on at most $h$ vertices, and suppose that $G$ contains a copy $H'$ of $H$. Since
    $\chi(T_t(N)) = t$, the subgraph of $H'$ induced by $V(H') \cap V(J)$ has chromatic number at least $\chi(H) - t$. On the other hand, this subgraph has at most $h$ vertices, and every subgraph of $J$ on $h$ vertices is a forest (as $\mathrm{girth}(J) > h$). It follows that $\chi(H) \leq t+2$, and if $\chi(H) = t+2$ then $\mathcal{M}(H)$ contains a forest. This proves (the contrapositive of) Item 3.
\end{proof}

\begin{comment}
    For the upper bound, we need the following partitioning tool. It follows
from Haussler's packing bound for set systems of bounded
VC-dimension~\cite{1995PackingLemma}; see also
\cite{luczak_coloring_2010} and the formulation in
\cite{liu_beyond_2024}.

\begin{lemma}[Partition lemma {\cite{liu_beyond_2024}}]
    \label{lemma:Partition}
    Let $d$ be a positive integer and let $G$ be an $n$-vertex graph with
    VC-dimension at most $d$. For every $1\le a\le n$, there is a
    partition $V(G)=V_1\sqcup\cdots\sqcup V_m$ with
    $m\le e(d+1)(2e)^d\left(\frac{n}{a}\right)^d$
    such that $|N_G(u)\triangle N_G(v)|\le 2a$ for every $i\in[m]$ and
    every $u,v\in V_i$.
\end{lemma}
\end{comment}

\noindent
Finally, we record a standard fact about forests.

\begin{fact}\label{fact:chromatic number large forest}
    Every graph of chromatic number at least $k$ contains every forest on
    at most $k$ vertices.
\end{fact}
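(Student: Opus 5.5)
The plan is to prove the statement by induction on the number of vertices of the forest, using greedy embedding into a $k$-critical subgraph.

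First, pass from $G$ to a subgraph $G'$ that is minimal with respect to having chromatic number at least $k$; such a subgraph exists since $\chi(G)\ge k$. Standard criticality then gives $\delta(G')\ge k-1$. Indeed, if some vertex $v$ had $d_{G'}(v)\le k-2$, a proper $(k-1)$-coloring of $G'-v$, which exists by minimality, would extend to $v$. That would contradict $\chi(G')\ge k$.

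It therefore suffices to show that every graph with minimum degree at least $k-1$ contains every forest $T$ on at most $k$ vertices.

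\begin{itemize}
\item First reduce to trees. Add edges to $T$ to join its components into a single tree $T'$ on the same vertex set; any copy of $T'$ contains a copy of $T$.
\item Then embed $T'$ greedily. Order its vertices $w_1,\dots,w_p$ with $p\le k$, so that each $w_i$ with $i\ge2$ has exactly one neighbour $w_{j}$ with $j<i$ (for instance, a BFS order).
\item Map $w_1$ to any vertex of $G'$.
\item At step $i$, the already embedded parent $\varphi(w_j)$ has at least $k-1$ neighbours in $G'$. At most $i-2\le k-2$ of them are already used, since the used vertices other than $\varphi(w_j)$ itself number $i-2$. Hence some neighbour of $\varphi(w_j)$ is free, and we take it as $\varphi(w_i)$.
\end{itemize}

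This produces an injective homomorphism of $T'$ into $G'\subseteq G$, which is the required copy.

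There is no real obstacle. The only points to check are the counting at step $i$ and the criticality argument giving $\delta(G')\ge k-1$, and both are routine.
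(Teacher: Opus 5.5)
Your proof is correct and follows the same route as the paper: pass to a subgraph that is minimal with $\chi\ge k$, so it has minimum degree at least $k-1$, and then embed the forest greedily. The only difference is cosmetic: you first join the components into a single spanning tree, whereas the paper embeds the components one at a time in a rooted order, and the counting is the same.
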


\begin{proof}
    Let $G'$ be a subgraph of $G$ that is minimal subject to
    $\chi(G')\ge k$. Then $\delta(G')\ge k-1$. Every forest on at most
    $k$ vertices can now be embedded greedily into $G'$, one component at
    a time and following a rooted order in each component.
\end{proof}

\begin{proof}[Proof of \Cref{thm:value for graphs of delta_chi(VC)}]
    Let $H$ be a graph and put $r := \chi(H)$.
    If $r=2$, then the Erd\H{o}s--Stone theorem gives
    $\operatorname{ex}(n,H)=o(n^2)$. Hence, for every $\eps>0$ and all
    sufficiently large $n$, every graph with minimum degree at least
    $\eps n$ contains $H$. Therefore
    $\delta_{\chi}^{\mathrm{VC}}(H)=0$.

    Assume from now on that $r\ge 3$. Put $h:=|V(H)|$. We first prove the lower bounds in the theorem.
    Fix an arbitrary $C > 0$.
    First let $G := G_{r-3,h,C}$ as in Construction \ref{construction:VC chromatic lower bound}.
    By Claim \ref{claim:VC chromatic lower bound}, we have $\delta(G) \geq \frac{r-3}{r-2}|V(G)|$, $\chi(G) > C$, $\VCdim(G) \leq 3$, and $G$ is $H$-free. This shows that $\delta_{\chi}^{\mathrm{VC}}(H)\ge\frac{r-3}{r-2}$.
    Suppose now that $\mathcal M(H)$ contains no forest, and take
    $G := G_{r-2,h,C}$. Then by Claim \ref{claim:VC chromatic lower bound}, we have
    $\delta(G) \geq \frac{r-2}{r-1}|V(G)|$, $\chi(G) > C$, $\VCdim(G) \leq 3$, and $G$ is $H$-free. This proves that
    $\delta_{\chi}^{\mathrm{VC}}(H)\ge\frac{r-2}{r-1}$ whenever
    $\mathcal M(H)$ contains no forest.

    We now move on to the upper bounds in \Cref{thm:value for graphs of delta_chi(VC)}.
    First, the Erd\H{o}s--Stone theorem gives
    $\delta_{\chi}^{\mathrm{VC}}(H)\le\frac{r-2}{r-1}$. Hence, it remains only
    to prove that
    $\delta_{\chi}^{\mathrm{VC}}(H)\le\frac{r-3}{r-2}$ when
    $\mathcal M(H)$ contains a forest.
    Let $F\in\mathcal M(H)$ be a forest. Fix any $0<\eps\le 1$ and $d \geq 1$. We shall prove that there exists
    $n_0=n_0(H,\eps,d)$ such that every $H$-free graph $G$ on
    $n\ge n_0$ vertices satisfying $\VCdim(G)\le d$ and
    $\delta(G)\ge
    \big(\frac{r-3}{r-2}+\eps\big)n$
    has chromatic number bounded in terms of $H$, $\eps$, and $d$ only.

    Apply \Cref{lemma:Partition} with
    $a:=\frac{\eps n}{10h}$. For all sufficiently large $n$, this
    gives a partition $V(G)=V_1\sqcup\cdots\sqcup V_m$, where $m$ is
    bounded in terms of $H$, $\eps$, and $d$, such that
    $|N_G(u)\triangle N_G(v)|\le 2a = \frac{\eps n}{5h}$ whenever
    $u,v$ belong to the same part.
    In particular, if $W$ is a non-empty subset of some $V_i$ with
    $|W|\le h$, then, fixing any $u\in W$, we obtain
    \begin{align}\label{ineq: large common neighbor}
        |N_G(W)|
        =\left|\bigcap_{v\in W}N_G(v)\right|\ge |N_G(u)|
        -\sum_{v\in W}|N_G(u)\setminus N_G(v)|\ge \delta(G)-2a|W|
        \ge
        \left(\frac{r-3}{r-2}+\frac{\eps}{2}\right)n.
    \end{align}

    \begin{claim}\label{claim: large part no forest}
        For each $i \in [m]$,
        $G[V_i]$ is $F$-free.
    \end{claim}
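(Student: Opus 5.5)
We argue by contradiction: suppose some $G[V_i]$ contains a copy of the forest $F$. Since $F\in\mathcal M(H)$, there is a partition $V(H)=A_1\cup\cdots\cup A_{r-2}\cup B$ with $A_1,\dots,A_{r-2}$ independent and $H[B]\cong F$. Let $W\subseteq V_i$ be the vertex set of the copy of $F$, so $|W|=|B|\le h$. We will extend this copy to a copy of $H$ in $G$, contradicting $H$-freeness. The required structure is $r-2$ further disjoint sets $W_1,\dots,W_{r-2}$ with $|W_j|=|A_j|$, such that $W$ and all the $W_j$ are pairwise completely joined; the $W_j$ need not be independent.

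First, by \eqref{ineq: large common neighbor}, the common neighbourhood $N_G(W)$ has size at least $\bigl(\frac{r-3}{r-2}+\frac{\eps}{2}\bigr)n$. Inside $N_G(W)$ we need a copy of $K_{|A_1|,\dots,|A_{r-2}|}$, and it suffices to find $K_{r-2}(h)$, the complete $(r-2)$-partite graph with parts of size $h$. We view $G' := G[N_G(W)]$ as a graph on $n':=|N_G(W)|$ vertices. Every vertex $u\in N_G(W)$ has at most $n-n'$ neighbours outside $N_G(W)$, so
\[
d_{G'}(u)\ \ge\ \delta(G)-(n-n')\ \ge\ n'-\Bigl(\frac{1}{r-2}-\eps\Bigr)n .
\]
Since $n'\ge \frac{r-3}{r-2}n$, we have $n\le \frac{r-2}{r-3}n'$ when $r\ge 4$. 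This gives minimum degree
\[
\delta(G')\ \ge\ \Bigl(1-\frac{1}{r-3}+\eps'\Bigr)n' \;=\; \Bigl(\frac{r-4}{r-3}+\eps'\Bigr)n'
\]
for some $\eps'=\eps'(\eps,r)>0$; the minimum degree, rather than an edge count, is what keeps the $\eps$ gain intact.

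We then conclude by Erd\H{o}s--Stone: a graph on $n'$ vertices (with $n'$ large) and minimum degree at least $\bigl(\frac{r-4}{r-3}+\eps'\bigr)n'$ has edge density above the Tur\'an density for $K_{r-2}$, so it contains $K_{r-2}(h)$. Its parts serve as $W_1,\dots,W_{r-2}$ (trimmed to sizes $|A_j|$). Together with $W$, this gives a copy of $H$, a contradiction.

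The case $r=3$ has to be handled separately, but it is easier. There is nothing to extend except $F$ itself by one independent class $A_1$. We simply need $|A_1|\le h$ common neighbours of $W$ outside $W$, which exist since $|N_G(W)|\ge \frac{\eps}{2}n\gg h$.

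The main obstacle is the degree bookkeeping in the case $r\ge4$: checking that passing to $N_G(W)$ keeps relative minimum degree strictly above $\frac{r-4}{r-3}$. The computation above suggests it works, because the loss $\frac{1}{r-2}-\eps$ in absolute degree translates, after rescaling by $n'\ge\frac{r-3}{r-2}n$, into exactly the Tur\'an threshold for $K_{r-2}$ plus a margin. Alternatively, one can iterate the argument: choose $W_1$ inside $N_G(W)$ greedily as a set of common neighbours, then use \eqref{ineq: large common neighbor}-type estimates via $\delta(G)$ to shrink step by step. With this claim, each $G[V_i]$ is $F$-free, so by \Cref{fact:chromatic number large forest} it has chromatic number less than $|V(F)|$. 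Hence $\chi(G)<m|V(F)|$.
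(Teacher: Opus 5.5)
Your argument is correct, but it extends the forest copy to $H$ by a different route than the paper.

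The paper never leaves the partition. It builds $W_2,\dots,W_{r-1}$ one at a time, each of size $h$. By pigeonhole over the $m$ classes, each new set is chosen inside a single class, within the common neighbourhood of the sets already chosen. Hence \eqref{ineq: large common neighbor} applies to every $W_\ell$, and $\bigl|\bigcap_{\ell\le j}N_G(W_\ell)\bigr|\ge(1-\frac{j}{r-2}+\frac{j\eps}{2})n$, which stays linear up to $j=r-2$.

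You use the bounded-VC partition only once, to give the forest copy $W$ a common neighbourhood of size $n'\ge(\frac{r-3}{r-2}+\frac{\eps}{2})n$. You then observe that for $r\ge4$ the graph $G[N_G(W)]$ inherits relative minimum degree at least $\frac{r-4}{r-3}+\frac{(r-2)\eps}{r-3}$; your computation is right, using $\frac{1}{r-2}-\eps\ge0$. Erd\H{o}s--Stone (or K\H{o}v\'ari--S\'os--Tur\'an when $r=4$) then supplies $K_{r-2}(h)$ inside $N_G(W)$. The $r=3$ case is trivial, as you say.

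Each route has its advantage. Yours makes clear that bounded VC-dimension is needed only to give the forest part a large common neighbourhood; the rest of $H$ comes from the minimum-degree condition alone, via a standard degree-inheritance argument. The paper's route is uniform in $r$, with no separate $r=3$ case, and uses nothing beyond pigeonhole. The price is invoking the partition property $r-1$ times instead of once.

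One caution about your closing ``alternatively'' remark. Iterating greedily, with estimates coming only from $\delta(G)$, does not work: at density $\frac{r-3}{r-2}+\eps$, a set of $h$ arbitrary vertices need not have any common neighbour. The iteration goes through only if each new set is placed inside a single partition class, which is exactly what the paper does. Your main argument does not depend on this remark.
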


    \begin{poc}
        Suppose for a contradiction that $G[V_i]$ contains a copy $F_1$
        of $F$, and put $W_1:=V(F_1)$. We construct sets
        $W_2,\ldots,W_{r-1}$ inductively. Each $W_j$ for $2 \leq j \leq r-1$ will have size $h$
        and will be contained in one part of the partition, and every two
        distinct sets among $W_1,\ldots,W_{r-1}$ will be completely
        joined.

        Suppose that $W_1,\ldots,W_j$ have been chosen for some
        $1\le j\le r-2$. By \eqref{ineq: large common neighbor},
        \[
            \left|\bigcap_{\ell=1}^j N_G(W_\ell)\right|
            \ge
            \left(
                1-\frac{j}{r-2}+\frac{j\eps}{2}
            \right)n.
        \]
        Since $j\le r-2$, the right-hand side is linear in $n$. The
        number $m$ of partition classes is independent of $n$, so for
        all sufficiently large $n$, some class $V_t$ contains at least
        $h$ vertices of this common neighborhood. Let $W_{j+1}$ be any
        $h$ such vertices. Then $W_{j+1}$ is completely joined to every
        one of $W_1,\ldots,W_j$, and it lies in a single partition class,
        so the induction may continue.

        We eventually obtain $W_1,\ldots,W_{r-1}$ such that every two of
        these sets are completely joined and $G[W_1]$ contains a copy of
        $F$. Since $F\in\mathcal M(H)$, the definition of the
        decomposition family implies that
        $G[W_1\cup\cdots\cup W_{r-1}]$ contains a copy of $H$, a
        contradiction.
    \end{poc}

    We can now complete the proof:
    for each $i \in [m]$,
    \Cref{claim: large part no forest} and
    \Cref{fact:chromatic number large forest} give
    $\chi(G[V_i])<h$. Hence $\chi(G)<mh$, which is bounded in terms of
    $H$, $\eps$, and $d$ only. This proves
    $\delta_{\chi}^{\mathrm{VC}}(H)\le\frac{r-3}{r-2}$ whenever
    $\mathcal M(H)$ contains a forest, and completes the proof.
\end{proof}

\section{Concluding remarks}\label{sec:remark}
The most immediate problem for homomorphism thresholds is
Conjecture~\ref{conj:delta_hom K(1,s,t)}, which predicts the exact value of
$\delta_{\mathrm{hom}}(K_{1,s,t})$ for all $s\le t$.  The case
$K_{1,6,8}$ is an important unresolved instance: it seems to require
coloring information beyond the present Brooks-type argument and is connected
to the Borodin--Kostochka problem.

A natural next step is to identify a broader structural criterion for
$3$-chromatic graphs with $\delta_{\mathrm{hom}}(H)=1/3$.  The graph
$P_4^+$, obtained by adding a universal vertex to a four-vertex path, is a
basic test case, and we expect $\delta_{\mathrm{hom}}(P_4^+)=1/3$. Another interesting direction, in view of \Cref{thm:strict lowbd for delta_VC}, is whether the
homomorphism threshold is also always positive.

\begin{problem}
    Does every graph $H$ with $\chi(H)=r\ge 3$ satisfy
    $\delta_{\mathrm{hom}}(H)>0$?
\end{problem}

For VC-dimension thresholds, the principal comparison problem is
\Cref{conj:chromatic VC}.  Our results settle it whenever the chromatic
threshold is one of the two lower values, leaving precisely the top regime
$\delta_\chi(H)=\frac{r-2}{r-1}.$
Understanding this case appears to require a structural link between maximal
$H$-free graphs of unbounded VC-dimension and the decomposition-family
obstruction governing the chromatic threshold.  Even for $3$-chromatic
forbidden graphs, the following classification problem remains open.
\begin{problem}
Characterize the graphs $H$ for which
$\delta_{\mathrm{VC}}(H)=\frac12$.
\end{problem}

Theorem~\ref{thm:complete results on tripartite graphs on VC} in
particular shows that $1/2$ is an accumulation point of
$\delta_{\mathrm{VC}}(H)$. The ideas here can be extended
to show that $(r-2)/(r-1)$ is an accumulation point for every $r\ge3$;
we omit the details.

\paragraph*{Acknowledgements.}
The second author would like to thank the 2025 IBS ECOPRO Summer School where this project was initiated. While finalizing this manuscript, we learned that Jinze Hu, Qinghai Liu, and Liping Zhang independently obtained \Cref{thm:value for graphs of delta_chi(VC)}. We thank them for informing us of their work.

\bibliographystyle{abbrv}
\bibliography{HomandVC}
\end{document}